\definecolor{darkgreen}{rgb}{0,0.45,0}  
\DeclareFontFamily{U}{mathx}{\hyphenchar\font45}
\DeclareFontShape{U}{mathx}{m}{n}{
	<5> <6> <7> <8> <9> <10>
	<10.95> <12> <14.4> <17.28> <20.74> <24.88>
	mathx10
}{}
\DeclareSymbolFont{mathx}{U}{mathx}{m}{n}
\DeclareMathAccent{\widecheck}{0}{mathx}{"71}
\DeclareMathAccent{\wideparen}{0}{mathx}{"75}
\newcommand{\Q}{\mathbb{Q}}
\newcommand{\Z}{\mathbb{Z}}
\newcommand\bR{\mathbb{R}}
\newcommand\bZ{\mathbb{Z}}
\newcommand\bQ{\mathbb{Q}}
\newcommand\bP{\mathbb{P}}
\newcommand\Zp{{\mathbb{Z}_p}}
\newcommand\Qp{{\mathbb{Q}_p}}
\newcommand\cL{\mathcal{L}}
\newcommand\cG{\mathcal{G}}
\newcommand\cH{\mathcal{H}}
\newcommand\cN{\mathcal{N}}
\newcommand\cO{\mathcal{O}}
\newcommand\cT{\mathcal{T}}
\newcommand\gp{\mathfrak{p}}
\newcommand\gP{\mathfrak{P}}
\newcommand\sC{\mathscr{C}}
\newcommand\sL{\mathscr{L}}
\newcommand\sZ{\mathscr{Z}}
\newcommand{\ob}[1]{\mkern 1.5mu\overline{\mkern-1.5mu#1\mkern-1.5mu}\mkern 1.5mu}
\newcommand{\loc}{\mathrm{loc}}
\newcommand{\Loc}{\mathrm{Loc}}
\newcommand{\Ind}{\mathrm{Ind}}
\newcommand{\res}{\mathrm{res}}
\newcommand{\ord}{\mathrm{ord}}
\newcommand{\Art}{\textrm{Art}}
\newcommand{\ab}{\textrm{ab}}
\DeclareMathOperator{\Gal}{Gal}
\DeclareMathOperator{\id}{id}
\DeclareMathOperator{\Hom}{Hom}
\DeclareMathOperator{\coker}{coker}
\DeclareMathOperator{\rec}{rec}
\DeclareMathOperator{\rk}{rk}
\DeclareMathOperator{\Tr}{Tr}
\DeclareMathOperator{\GL}{GL}
\DeclareMathOperator{\Diag}{Diag}
\newcommand{\cyc}{\textrm{cyc}}
\newcommand{\HH}{\mathrm{H}}
\newcommand{\ff}{\mathrm{f}}
\newcommand{\ur}{\mathrm{ur}}
\DeclareSymbolFont{cyrletters}{OT2}{wncyr}{m}{n}
\DeclareMathSymbol{\Sha}{\mathalpha}{cyrletters}{"58}
\newtheoremstyle{thmstyle}
{\parskip} 
{\topsep} 
{\itshape} 
{} 
{\bfseries} 
{.} 
{.5em} 
{} 
\newtheoremstyle{defstyle}
{\parskip} 
{\topsep} 
{} 
{} 
{\bfseries} 
{.} 
{.5em} 
{} 
\theoremstyle{defstyle}
\newtheorem{exemple}[subsubsection]{Example}
\newtheorem{remark}[subsubsection]{Remark}
\theoremstyle{thmstyle}
\newtheorem{theorem}[subsubsection]{Theorem}
\newtheorem{proposition}[subsubsection]{Proposition}
\newtheorem{lemme}[subsubsection]{Lemma}
\newtheorem{corollaire}[subsubsection]{Corollary}
\newtheorem*{corollaire*}{Corollary}
\newtheorem*{conjecture*}{Conjecture}
\newtheorem{THM}[subsection]{Theorem}
\newtheorem{CONJ}[subsection]{Conjecture}
\newtheorem{CORO}[subsection]{Corollary}
\author{Alexandre Maksoud}
\address{Universität Paderborn, J2.305, Warburger Str. 100, 33098 Paderborn, Germany}
\email{maksoud.alexandre@gmail.com}
\subjclass{11R27 (primary), 11R23 (secondary).}
\keywords{Leopoldt conjecture, Gross-Kuz'min conjecture, $p$-adic transcendence theory, Iwasawa theory, Artin formalism}
\title{On the rank of Leopoldt's and Gross's regulator maps}
\begin{document}
	\begin{abstract}
		We generalize Waldschmidt's bound for Leopoldt's defect and prove a similar bound for Gross's defect for an arbitrary extension of number fields. As an application, we prove new cases of Gross's finiteness conjecture (also known as the Gross-Kuz'min conjecture) beyond the classical abelian case, and we show that Gross's $p$-adic regulator has at least half of the conjectured rank. We also describe and compute non-cyclotomic analogues of Gross's defect.
	\end{abstract}
	\maketitle
	
	\section{Introduction}
	
	Let $p$ be a prime number. Given a number field $K$, we denote by $S_p(K)$ and $S_\infty(K)$ the sets of $p$-adic places and archimedean places of $K$ respectively. Fix an algebraic closure $\ob{\Q}_p$ of $\Qp$ and let $A^\wedge= \ob{\Q}_p \otimes_{\Zp} \varprojlim_n A/p^n A$ for any abelian group $A$. 
	The \emph{Leopoldt regulator map} is the $\ob{\Q}_p$-linear map
	\begin{equation}\label{eq:leopoldts_map}
		\iota_K \colon \cO_K^{\times,\wedge} \longrightarrow \prod_{\gP|p} \cO_{K_\gP}^{\times,\wedge}
	\end{equation}
	induced by the diagonal embedding of the unit group of $K$ into all its $p$-adic completions.
	
	\begin{CONJ}[Leopoldt's conjecture for $(K,p)$, \cite{leopoldt}]
		The Leopoldt regulator map $\iota_K$ is injective.
	\end{CONJ}

	Ax's method, combined with Brumer's $p$-adic analogue of Baker's theorem, implies Leopoldt's conjecture for abelian extensions of $\bQ$ or of an imaginary quadratic field \cite{ax,brumer}. The same method also proves Leopoldt's conjecture when $K$ is an imaginary $A_4$-extension of $\bQ$ \cite{emsalem1984independance}. 
	
	Another classical result concerning Leopoldt's conjecture is obtained by Waldschmidt as an application of his study of transcendence properties of the exponential function in several variables. Let $\delta^\textbf{L}_K$ be the dimension of $\ker \iota_K$. Leopoldt's conjecture then predicts that $\delta^\textbf{L}_K=0$ and $\delta^\textbf{L}_K$ is called Leopoldt's defect (we will omit its dependence on $p$).

	\begin{THM}[Waldschmidt, \cite{waldschmidt}]\label{thm:waldschmidt_bound}
		For every number field $K$, we have
		\[\delta^\textbf{L}_K\leq (|S_\infty(K)|-1)/2.\]
	\end{THM}

	This bound is the best upper bound for Leopoldt's defect so far. 
	The main goal in this work is to derive from Waldschmidt's and Roy's contributions in $p$-adic transcendence theory similar results concerning the Gross regulator map and to prove new cases of the Gross-Kuz'min conjecture, whose statement is now recalled.
	
	Consider the $\Zp$-hyperplane $\cH $ of $\prod_{\gP|p} \Zp$ given by the equation $\sum_{\gP|p}s_\gP =0$, and the map
	\begin{equation}\label{eq:def_sL_gross}
		\sL_K \colon \left\{\begin{array}{ccl}
			\cO_K[\tfrac{1}{p}]^{\times,\wedge} & \longrightarrow & \cH^\wedge \\
			x & \mapsto & (-\log_p(\cN_{\gP}(x)))_\gP,
		\end{array}\right.
	\end{equation}
	where $\cO_K[\tfrac{1}{p}]^\times$ is the group of $p$-units of $K$, $\cN_{\gP}$ is the local norm map for the extension $K_\gP/\Qp$, and $\log_p:\Q_p^\times \to \Qp$ is the usual Iwasawa $p$-adic logarithm. By the usual
	product formula, $\sL_K$ is well-defined. We shall refer to $\sL_K$ as the (cyclotomic) Gross regulator map.
	
	\begin{CONJ}[Gross-Kuz'min's conjecture for $(K,p)$, \cite{gross1981padic,kuzmin}]
		The Gross regulator map $\sL_K$ is surjective.
	\end{CONJ}

	Like Leopoldt's conjecture, the Gross-Kuz'min conjecture plays a central role in the formulation of $p$-adic analogues of Dirichlet's class number formula. Leopoldt's regulator appears in Colmez's formula on the residue at $s=1$ of the $p$-adic Dedekind zeta function of a totally real number field \cite{colmezleopoldt}, whereas Gross's regulator plays the role of an $\cL$-invariant in the celebrated Gross-Stark conjecture over a CM number field \cite{gross1981padic}. When $K$ is neither totally real nor CM, a conjectural interpretation of these regulators in terms of $p$-adic Artin $L$-functions is still available (see \cite{hIMC}). Besides their potential applications to the equivariant Tamagawa number conjecture as in \cite{BKSANT}, the Gross-Kuz'min conjecture and its non-cyclotomic analogue discussed below also yield information on the fine structure of class groups attached to $\Zp$-extensions of $K$ (see e.g. \cite{federergross,kolster,nguyenquangdocapitulation,jaulent2017}).
	
	The Gross-Kuz’min conjecture is true when $K/\Q$ is abelian as shown by Greenberg \cite{greenberg1973}. More recently, Kleine \cite{kleine} proved the conjecture for any $K$ which has at most two $p$-adic primes. (We note that Kleine’s approach does not use $p$-adic transcendence theory).

	Our first result gives an upper bound for the Gross defect $\delta^\textbf{G}_K=\dim \coker \sL_K$ as well as a slight generalization of Theorem \ref{thm:waldschmidt_bound}.

	\begin{THM}\label{THM:generalization_Waldschmidt}
		Let $K/k$ be an extension of number fields. The following inequalities hold:
		\[\delta^\textbf{L}_K \leq \delta^\textbf{L}_k + \left(|S_\infty(K)|-|S_\infty(k)|\right)/2, \quad\quad
		\delta^\textbf{G}_K \leq \delta^\textbf{G}_k + \left(|S_p(K)|-|S_p(k)|\right)/2.\]
		Moreover, if $K$ has at least one real place and $|S_p(K)|\neq|S_p(k)|$, then the second bound is strict.
	\end{THM}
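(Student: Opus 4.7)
My approach is to relativize Waldschmidt's original argument by combining Artin formalism with $p$-adic transcendence theory. For the first inequality I would consider the commutative square
\[
\begin{array}{ccc}
\cO_k^{\times,\wedge} & \hookrightarrow & \cO_K^{\times,\wedge}\\
\downarrow & & \downarrow \\
\prod_{\gp|p}\cO_{k_\gp}^{\times,\wedge} & \hookrightarrow & \prod_{\gP|p}\cO_{K_\gP}^{\times,\wedge}
\end{array}
\]
with vertical maps $\iota_k$ and $\iota_K$. Both horizontal arrows are $\ob{\Q}_p$-linear injections---the top one because $\cO_k^\times\hookrightarrow\cO_K^\times$ is injective modulo roots of unity, the bottom one tautologically. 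The snake lemma then yields $\delta^\textbf{L}_K\leq\delta^\textbf{L}_k+\dim\ker\bar{\iota}$, where $\bar\iota$ is the induced map on cokernels. Since the source of $\bar\iota$ has $\ob{\Q}_p$-dimension $|S_\infty(K)|-|S_\infty(k)|$, the first bound is equivalent to the rank estimate
\[\rk(\bar\iota)\geq (|S_\infty(K)|-|S_\infty(k)|)/2.\]

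For the Gross inequality the same scheme applies to
\[
\begin{array}{ccc}
\cO_k[\tfrac{1}{p}]^{\times,\wedge} & \hookrightarrow & \cO_K[\tfrac{1}{p}]^{\times,\wedge}\\
\downarrow & & \downarrow\\
\cH_k^\wedge & \hookrightarrow & \cH_K^\wedge
\end{array}
\]
with vertical maps $\sL_k$ and $\sL_K$, the bottom horizontal arrow being $(s_\gp)_\gp\mapsto([K_\gP:k_\gp]\cdot s_{\gp|_k})_\gP$; its commutativity reduces to the tower formula $\cN_{K_\gP/\Q_p}(x)=\cN_{k_\gp/\Q_p}(x)^{[K_\gP:k_\gp]}$ for $x\in k_\gp^\times$. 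The snake lemma then gives $\delta^\textbf{G}_K\leq\delta^\textbf{G}_k+\dim\coker\bar{\sL}$, reducing matters to
\[\rk(\bar{\sL})\geq (|S_p(K)|-|S_p(k)|)/2.\]

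Both residual rank estimates should follow from the Brumer--Waldschmidt--Roy transcendence theorem for $p$-adic logarithms of algebraic numbers. After lifting a basis of the source of $\bar\iota$ (resp.\ $\bar\sL$) to units of $K$ (resp.\ $p$-units of $K$), one writes the corresponding map as a matrix of $p$-adic logarithms of these lifts at the local completions above $p$; the $\Q$-rank of this matrix is controlled by the multiplicative independence of the lifts modulo the units (resp.\ $p$-units) of $k$, and Roy's theorem converts this into the required $\ob{\Q}_p$-rank lower bound. The strict Gross inequality, under the assumption that $K$ has a real place and $|S_p(K)|\neq|S_p(k)|$, should come from producing one additional element of $\im\bar{\sL}$ using the real archimedean embedding---for instance by building a $p$-unit whose image exploits a $p$-adic prime of $K$ lying above a non-inert prime of $k$ together with the sign constraint furnished by the real place. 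The principal obstacle is the transcendence step: arranging the matrix of $p$-adic logarithms to meet Roy's hypotheses exactly and yield the factor of one half. The strict case presents a secondary obstacle, as producing the extra element of $\im\bar{\sL}$ requires a concrete construction that genuinely exploits the real archimedean place.
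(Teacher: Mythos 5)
Your snake-lemma reductions are formally sound: both squares commute, the horizontal arrows are injective after $\wedge$, and they do reduce the theorem to the rank estimates $\rk(\bar\iota)\geq(|S_\infty(K)|-|S_\infty(k)|)/2$ and $\rk(\bar\sL)\geq(|S_p(K)|-|S_p(k)|)/2$. The genuine gap is the step you yourself flag as "the principal obstacle": extracting these bounds from Theorem \ref{thm:waldschmidt_roy}. That theorem gives $\rk M\geq\frac{\theta(M)}{1+\theta(M)}\,m$, and to reach the factor $1/2$ you need $\theta(M)$ to be maximal, which in practice means the entries of your matrix must be $\ob{\Q}$-linearly independent (or you must rule out all block-triangularizations over $\ob{\Q}$). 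For the quotient maps $\bar\iota$ and $\bar\sL$ the entries are $\ob{\Q}$-linear combinations of $p$-adic logarithms of lifted units (resp.\ $p$-units) at the several places above $p$, reduced modulo the local image of $k$, and there is no reason for these to be independent: relations coming from norms down to $k$, and in the Gross case from $\log_p(p)=0$ (the $\ob{\Q}$-linear extension of $\log_p$ has kernel $p^{\ob{\Q}}$, Proposition \ref{prop:log_p_almost_injective}), can force $\theta(M)$ to drop, and then Waldschmidt--Roy yields strictly less than what you need. "Multiplicative independence of the lifts modulo the ($p$-)units of $k$" does not by itself control $\theta$ of a matrix whose columns live at different completions and whose target has been quotiented. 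This is exactly why the paper does not relativize Waldschmidt's argument directly: it decomposes $\Ind_K^k\mathds{1}_K=\mathds{1}_k\oplus\rho_0$ and, via Artin formalism for the defects (Corollary \ref{coro:artin_formalism} and its Leopoldt analogue), reduces to irreducible representations $\rho$ of $G_\Q$; there the matrix is built from a $\ob{\Q}$-structure of $W$ and a basis of $\Hom_G(W,\cO_K^{\times,\wedge})$ (resp.\ $\Hom_G(W,\cO_K[\tfrac{1}{p}]^{\times,\wedge})$), its entries are provably $\ob{\Q}$-independent --- in the Gross case because the idempotent $e(\rho)$ kills $p^{\ob{\Q}}$ --- so $\theta(M)=\ell/m$ exactly, giving the sharper bounds $(d^+)^2/(d+d^+)$ and $f^2/(d^++2f)$ of Theorem \ref{THM:main_thm}, which sum to the relative inequalities. (Lemma \ref{lem:fontaine} plays the role of identifying the local target in the Leopoldt case.)

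The strictness mechanism you propose is also off target. No additional element of $\im\bar\sL$ is, or can be, produced from a real embedding and a "sign constraint": an archimedean place does not contribute classes to the image of a $p$-adic regulator map. In the paper, strictness is automatic from the refined bound: if $K$ has a real place, then every irreducible constituent $\theta$ of $\Ind_K^\Q\mathds{1}_K$ satisfies $d^+(\theta)\geq 1$ (Lemma \ref{lem:properties_invariants_d_d+_and_f}\,(3)), and $f^2/(d^++2f)<f/2$ whenever $d^+\geq 1$ and $f>0$. In other words, the real place guarantees units in every isotypic component --- more columns in the matrix of logarithms --- which strengthens the Waldschmidt--Roy estimate; it does not furnish an explicit extra element of the image. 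Your outline as it stands proves neither the $1/2$ bounds nor the strict inequality without importing precisely the representation-theoretic decomposition that constitutes the paper's actual argument.
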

	It follows from the last statement $k=\bQ$ that the Gross-Kuz'min conjecture holds for all cubic number fields.
	
	A key observation that we use in the computation of $\delta^\textbf{L}_K$ and $\delta^\textbf{G}_K$ is that they are compatible with Artin formalism. For any number field $k\subset \ob{\Q}$ of absolute Galois group $G_k=\Gal(\ob{\Q}/k)$, let $\Art_{\ob{\Q}_p}(G_k)$ be the set of finite dimensional $\ob{\Q}_p$-valued representations of $G_k$ of finite image. We will define defects $\delta^\textbf{L}_k(\rho)$ and $\delta^\textbf{G}_k(\rho)$ associated with $\rho\in \Art_{\ob{\Q}_p}(G_k)$ which satisfy the usual Artin formalism. In particular, when $\rho=\Ind^k_K \mathds{1}_K$ is the induction from $G_K$ to $G_k$ of the trivial representation, they coincide with $\delta^\textbf{L}_K$ and $\delta^\textbf{G}_K$ respectively. We will also define quantities $d(\rho)$, $d^+(\rho)$ and $f(\rho)$ which compute $[K:\Q]$, $|S_\infty(K)|$ and $|S_p(K)|$, respectively, when $\rho=\Ind^k_K \mathds{1}_K$ (see (\ref{eq:def_invariants_d+_et_f})). Our main theorem is the following.
	
	\begin{THM}\label{THM:main_thm}
		Let $\rho \in \Art_{\ob{\Q}_p}(G_\Q)$ be an irreducible representation and let $d=d(\rho)$, $d^+=d^+(\rho)$ and $f=f(\rho)$. If $d^+=f=0$, then we have $\delta^\textbf{L}_\Q(\rho)=\delta^\textbf{G}_\Q(\rho)=0$. Otherwise, we have the following inequalities.
		\[
		\delta^\textbf{L}_\Q(\rho) \leq \dfrac{(d^+)^2}{d+d^+}, \qquad
		\delta^\textbf{G}_\Q(\rho) \leq \dfrac{f^2}{d^++2f}.	
		\]
	\end{THM}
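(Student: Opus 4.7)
The plan is to reduce each defect to the (co)rank of an explicit matrix of $p$-adic logarithms over the splitting field $L$ of $\rho$ via Artin formalism, and then to apply Roy's quantitative refinement of Waldschmidt's $p$-adic transcendence theorem.

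Let $G=\Gal(L/\Q)$, fix a complex conjugation $c\in G$ generating $G_\infty$, and a decomposition subgroup $G_p\subset G$ at a prime of $L$ above $p$. Dirichlet's unit theorem, the normal basis theorem (applied after $\log_p$), and the finiteness of the $p$-adic class group yield $\ob{\Q}_p[G]$-module identifications
\[
\cO_L^{\times,\wedge}\cong\Ind_{G_\infty}^G\mathds{1}\ominus\mathds{1},\ \ \prod_{\gP\mid p}\cO_{L_\gP}^{\times,\wedge}\cong\ob{\Q}_p[G],\ \ \cO_L[\tfrac{1}{p}]^{\times,\wedge}\cong\cO_L^{\times,\wedge}\oplus\Ind_{G_p}^G\mathds{1},\ \ \cH^\wedge\cong\Ind_{G_p}^G\mathds{1}\ominus\mathds{1}.
\]
Frobenius reciprocity then computes the $\rho$-isotypic multiplicities (for irreducible $\rho\neq\mathds{1}$): $d^+$ and $d$ in the source and target of $\iota_L$; $d^++f$ and $f$ in the source and target of $\sL_L$. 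After picking $G$-equivariant splittings and invoking Schur's lemma, $\delta^\textbf{L}_\Q(\rho)$ is identified with the kernel dimension of a $d\times d^+$ matrix $M_\rho$ over $\ob{\Q}_p$, and $\delta^\textbf{G}_\Q(\rho)$ with the cokernel dimension of an $f\times(d^++f)$ matrix $N_\rho$. In the degenerate case $d^+=f=0$, both $\rho$-isotypic sources vanish, forcing the defects to be zero.

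Otherwise, the entries of $M_\rho$ and of the transpose $N_\rho^{\mathrm{T}}$ are $\ob{\Q}_p$-linear combinations of $p$-adic logarithms of algebraic numbers, drawn from a $\rho$-isotypic basis of $\cO_L^\times$ for $M_\rho$, and from the local norms $\cN_\gP$ of a basis of $\rho$-isotypic $p$-unit generators for $N_\rho^{\mathrm{T}}$. Roy's matrix refinement of Waldschmidt's theorem bounds the kernel dimension of any such $n\times r$ matrix of $p$-adic logarithms by $r^2/(n+r)$, provided the underlying multiplicative group spanned by the columns has $\ob{\Q}$-rank $r$. Applied to $M_\rho$ with $(n,r)=(d,d^+)$ this yields $\delta^\textbf{L}_\Q(\rho)\leq(d^+)^2/(d+d^+)$. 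Applied to $N_\rho^{\mathrm{T}}$, of size $(d^++f)\times f$, with $(n,r)=(d^++f,f)$ it yields $\delta^\textbf{G}_\Q(\rho)\leq f^2/(d^++2f)$. The full-rank hypothesis at the $\rho$-isotypic level follows from Minkowski's logarithmic embedding and the $\ob{\Q}$-independence of local uniformizers modulo units.

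The main obstacle is the careful equivariant setup: splittings, $\rho$-isotypic bases, and local uniformizers must be chosen consistently so that the matrix entries are genuinely $p$-adic logarithms of algebraic numbers, and so that Schur's lemma correctly translates (co)ranks into the Artin-formalism defects. The numerator $f^2$ (rather than $(d^++f)^2$) in the Gross bound is particularly delicate: it arises from applying Roy's theorem to $N_\rho^{\mathrm{T}}$ rather than $N_\rho$, so that the $f$-dimensional target of valuations plays the role of Roy's \emph{input group} and the full $(d^++f)$-dimensional source plays the role of the ambient space.
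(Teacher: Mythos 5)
Your overall architecture coincides with the paper's: reduce each defect, via isotypic components over the splitting field, to the (co)rank of a $d\times d^+$ (resp.\ $f\times(d^++f)$) matrix of $p$-adic logarithms, and then invoke the Waldschmidt--Roy rank bound (Theorem \ref{thm:waldschmidt_roy}). The module identifications and multiplicity counts you give are correct and match Lemma \ref{lem:properties_invariants_d_d+_and_f}, Proposition \ref{prop:identification_sL_rho_composante}, Proposition \ref{prop:alternate_decription_leopoldt_defect} and Corollary \ref{coro:formule_gross_defect_over_Q}. However, there is a genuine gap at the decisive non-degeneracy step. Theorem \ref{thm:waldschmidt_roy} gives $\rk M \geq \frac{\theta(M)}{1+\theta(M)}m$, and to obtain the stated bounds you must know $\theta(M)=\ell/m$, i.e.\ you must rule out any block-triangularization of $M$ by invertible matrices over $\ob{\Q}$; the sufficient condition used in the paper is that \emph{all entries} of the matrix are $\ob{\Q}$-linearly independent. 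Your substitute hypothesis, that ``the underlying multiplicative group spanned by the columns has $\ob{\Q}$-rank $r$,'' is not the hypothesis of the theorem, and your proposed justification via ``Minkowski's logarithmic embedding and the $\ob{\Q}$-independence of local uniformizers modulo units'' cannot deliver what is needed: passing from multiplicative independence of algebraic numbers to $\ob{\Q}$-linear independence of their $p$-adic logarithms is itself a transcendence statement (Baker--Brumer), not a consequence of unit theory. This is exactly the content of Proposition \ref{prop:log_p_almost_injective}, which is absent from your argument.

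The gap is most serious for the Gross bound, and it is the point the paper singles out as the new difficulty: the matrix entries are logarithms of $p$-units, and $\log_p$ is \emph{not} injective on $\ob{\Q}\otimes\cO_K[\tfrac{1}{p}]^\times$ (indeed $\log_p(p)=0$, so multiplicative independence of your column generators says nothing about the logarithm matrix --- a column built from $p$ itself would vanish identically). The paper resolves this by showing the kernel of the $\ob{\Q}$-linear extension of $\log_p$ is exactly the line $p^{\ob{\Q}}$ (Proposition \ref{prop:log_p_almost_injective}, via Baker--Brumer) and then observing that the idempotent $e(\rho)$ kills $p^{\ob{\Q}}$ for irreducible $\rho\neq\mathds{1}$, so the $\rho$-isotypic elements $\Psi_j(w_i)$ have $\ob{\Q}$-linearly independent logarithms; neither ingredient appears in your proposal, so your claim that $\theta(M')=(d^++f)/f$ is unsupported. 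A secondary (fixable) issue you flag but do not resolve: for Theorem \ref{thm:waldschmidt_roy} to apply the entries must lie in $\Lambda$, the $\ob{\Q}$-span of logarithms, not be arbitrary $\ob{\Q}_p$-linear combinations; this is handled in the paper by choosing a $\ob{\Q}$-structure $W_{\ob{\Q}}$ and $\ob{\Q}$-rational morphisms $\Psi_j$ (possible since $e(\rho)$ has algebraic coefficients). Finally, in the degenerate case your statement that the vanishing of the sources forces both defects to vanish is off for Gross, where the defect is a cokernel: what makes it vanish is that $f=0$ kills the target $\Hom(W^0_p,\ob{\Q}_p)$.
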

	By Artin formalism, this yields the upper bound (e.g. for Leopoldt's defect) $\delta^\textbf{L}_k(\rho)\leq d^+(\rho)/2$ for an arbitrary representation $\rho\in \Art_{\ob{\Q}_p}(G_k)$.
	We immediately recover  Theorem \ref{THM:generalization_Waldschmidt} by choosing $\rho$ such that $\Ind^k_K \mathds{1}_K = \rho \oplus \mathds{1}_k$.
	
	The first bound in Theorem \ref{THM:main_thm} is Laurent's main theorem in \cite{laurentcrelle}, but we will provide a much shorter proof of this result via a lemma on local Galois representations (Lemma \ref{lem:fontaine}). The second bound, however, does not seem to follow from the classical methods employed by Laurent \cite{laurentcrelle} and Roy \cite{roy} to study the $p$-adic closure of $S$-units of $K$, for a given finite set of places $S$. 
	
	Theorem \ref{THM:main_thm} together with Artin's formalism easily implies Leopoldt's conjecture for abelian extensions of an imaginary quadratic field. In the same vein, we indicate the two main applications of Theorem \ref{THM:main_thm}. 
	
	\begin{CORO}\label{CORO:gross_reg}
		Let $k$ be a totally real field and let $V$ be a totally odd Artin representation of $G_k$. Then Gross's $p$-adic regulator matrix $R_p(V)$ defined in \cite[(2.10)]{gross1981padic} has rank at least half of its size.
	\end{CORO}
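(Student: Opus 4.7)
The plan is to reduce the corollary to Theorem \ref{THM:main_thm} via Artin formalism, using the total oddness of $V$ to force $d^+=0$ on every irreducible summand of the induced representation $\Ind^\Q_k V$.

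First I would match Gross's definition of $R_p(V)$ in \cite[(2.10)]{gross1981padic} with the defect $\delta^\textbf{G}_k$ used here: for a totally odd $V$, the matrix $R_p(V)$ is square of size $f(V)=\sum_{v\mid p}\dim V^{D_v}$, where $D_v\subset G_k$ is the decomposition subgroup at $v$, and its corank coincides with $\delta^\textbf{G}_k(V)$. The corollary therefore reduces to proving
\[
\delta^\textbf{G}_k(V) \;\leq\; \frac{f(V)}{2}.
\]

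Next, I would invoke Artin formalism: $\delta^\textbf{G}_k(V)=\delta^\textbf{G}_\Q(\Ind^\Q_k V)$, and Mackey's formula $(\Ind^\Q_k V)^{D_v}=\bigoplus_{w\mid v}V^{D_w}$ yields $f(\Ind^\Q_k V)=f(V)$. Decompose $\Ind^\Q_k V=\bigoplus_i \rho_i^{\oplus n_i}$ into irreducibles of $G_\Q$. Since $k$ is totally real, every embedding of $k$ into $\overline{\Q}$ is fixed by complex conjugation $c$, so on each Mackey summand $c$ acts via conjugation by a complex conjugation at a real place of $k$. Because $V$ is totally odd, this forces $c$ to act as $-1$ on $\Ind^\Q_k V$; that is, the induction is totally odd as a representation of $G_\Q$, and hence every constituent $\rho_i$ satisfies $d^+(\rho_i)=0$.

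Applying Theorem \ref{THM:main_thm} to each $\rho_i$ (with $\delta^\textbf{G}_\Q(\rho_i)=0$ when $f(\rho_i)=0$, and $\delta^\textbf{G}_\Q(\rho_i)\leq f(\rho_i)^2/(2f(\rho_i))=f(\rho_i)/2$ otherwise) and summing with multiplicities,
\[
\delta^\textbf{G}_k(V) \;=\; \sum_i n_i\,\delta^\textbf{G}_\Q(\rho_i) \;\leq\; \frac{1}{2}\sum_i n_i\,f(\rho_i) \;=\; \frac{f(\Ind^\Q_k V)}{2} \;=\; \frac{f(V)}{2},
\]
which is the desired bound.

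The main obstacle I anticipate is not this short chain of inequalities but the initial translation step: one must carefully compare Gross's original matrix $R_p(V)$ from \cite[(2.10)]{gross1981padic} (built from $p$-units and local norm maps) with the intrinsic cokernel defining $\delta^\textbf{G}_k(V)$ in the present paper, and verify that in the totally odd case the matrix is square of size $f(V)$. Once this dictionary is in place, the argument is a direct application of Theorem \ref{THM:main_thm}, in which total oddness precisely kills the $d^+$ contribution and produces the sought half-rank estimate.
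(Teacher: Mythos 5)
Your reduction step is sound and is, in mechanism, the same as the paper's: once one knows that the corank of $R_p(V)$ is a Gross defect of size $f(V)$, the bound follows from Theorem \ref{THM:main_thm} together with Artin formalism, and your observation that total oddness of $V$ over a totally real $k$ forces $d^+(\rho_i)=0$ on every constituent of $\Ind_k^\Q V$ is correct (indeed $d^+(\Ind_k^\Q V)=d^+(V)=0$ and $d^+$ is non-negative and additive). In fact the bound $\delta^\textbf{G}_k(V)\leq f(V)/2$ does not even require the oddness, since $f^2/(d^++2f)\leq f/2$ always; this is Corollary \ref{coro:gk_bound}(1), whose proof is exactly your induction-to-$\Q$ argument.

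The genuine gap is the step you explicitly defer: the identification of $R_p(V)$ with the regulator formalism of this paper. That identification is the actual content of the paper's proof, not a routine dictionary check. The paper argues as follows: with $K$ the CM field cut out by $\rho$, Gross's map $\lambda_p$ of \cite[(1.18)]{gross1981padic} is the restriction of $\sL_K$ to the minus part for complex conjugation, so $\lambda_p$ and $\sL_K$ have the same $\theta$-isotypic components for all $\theta$ with $d^+(\theta)=0$; and since forming $(V\otimes -)^{G_k}$ extracts the $\rho^*$-isotypic component (the \emph{contragredient}, not $\rho$ itself), one gets $\rk R_p(V)=\rk\sL_k(\rho^*)=f(\rho^*)-\delta^\textbf{G}_k(\rho^*)$, whence the bound from $\delta^\textbf{G}_k(\rho^*)\leq f(\rho^*)/2$ and $f(\rho^*)=f(\rho)$. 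Your proposal asserts "corank of $R_p(V)$ equals $\delta^\textbf{G}_k(V)$" without proof and with the wrong (or at least unexamined) dual: a priori $\delta^\textbf{G}_k(V)$ and $\delta^\textbf{G}_k(V^*)$ need not coincide unconditionally, although this does not affect the final inequality because $f$ is self-dual. To complete the proof you would need to supply the comparison of Gross's construction with $\sL_K$ (minus part plus isotypic-component bookkeeping), which is where the real work of the corollary lies; the inequality chain after that point is correct and coincides with the paper's.
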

	This corollary strengthens Gross's classical result stating that the matrix $R_p(V)$ has positive rank \cite[Prop. 2.13]{gross1981padic}.
	\begin{CORO}\label{CORO:gk_dihedral}
	The Gross-Kuz'min conjecture holds for abelian extensions of imaginary quadratic fields. It also holds for abelian extensions of real quadratic fields having at least one real place.
	\end{CORO}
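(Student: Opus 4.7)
The plan is to apply Theorem \ref{THM:main_thm} to each irreducible constituent of $\Ind^\Q_K \mathds{1}_K$. By the Artin formalism for $\delta^\textbf{G}$, we have $\delta^\textbf{G}_K = \delta^\textbf{G}_\Q(\Ind^\Q_K \mathds{1}_K) = \sum_\rho n_\rho \delta^\textbf{G}_\Q(\rho)$ with $n_\rho \geq 0$, so since each summand is non-negative it suffices to show $\delta^\textbf{G}_\Q(\rho) = 0$ for every irreducible $\rho$ appearing. Using $\Ind^\Q_K \mathds{1}_K = \Ind^\Q_F\bigl(\Ind^F_K \mathds{1}_K\bigr) = \bigoplus_\psi \Ind^\Q_F \psi$, the sum running over characters $\psi$ of $\Gal(K/F)$, and letting $\tau$ denote the nontrivial element of $\Gal(F/\Q)$, each $\Ind^\Q_F \psi$ is either a sum of two one-dimensional characters of $G_\Q$ (when $\psi^\tau = \psi$) or an irreducible two-dimensional representation (when $\psi^\tau \neq \psi$).

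For a one-dimensional $\rho$, one has $d(\rho) = 1$ and $d^+(\rho), f(\rho) \in \{0, 1\}$, so Theorem \ref{THM:main_thm} yields $\delta^\textbf{G}_\Q(\rho) \leq f^2/(d^+ + 2f) \leq 1/2 < 1$ (or $\delta^\textbf{G}_\Q(\rho) = 0$ directly if $d^+ = f = 0$), whence $\delta^\textbf{G}_\Q(\rho) = 0$ as this defect is a non-negative integer. For a two-dimensional irreducible $\rho = \Ind^\Q_F \psi$ one has $d(\rho) = 2$ and $f(\rho) \in \{0, 1, 2\}$, and the same bound reads $f^2/(d^+ + 2f) \leq 4/5 < 1$ as soon as $d^+(\rho) \geq 1$, the tightest instance being $f = 2$, $d^+ = 1$. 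The whole argument therefore reduces to verifying $d^+(\rho) \geq 1$ for each such two-dimensional $\rho$.

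To that end, I would fix a complex conjugation $c \in G_\Q$ and compute $\rho(c)$ in a convenient basis of the induced module. When $F$ is imaginary quadratic, $c \notin G_F$, and with $\{1, c\}$ as coset representatives of $G_F$ in $G_\Q$ one finds $\rho(c) = \bigl(\begin{smallmatrix} 0 & 1 \\ 1 & 0 \end{smallmatrix}\bigr)$, which has $+1$ as an eigenvalue, giving $d^+(\rho) = 1$. When $F$ is real quadratic, $c \in G_F$, and for coset representatives $\{1, \tau\}$ the matrix of $\rho(c)$ is $\mathrm{diag}(\psi(c), \psi^\tau(c))$; the hypothesis that $K$ admits a real place $w$ means that $c$ may be chosen as the complex conjugation attached to $w$, so that $c \in G_K$ forces $\psi(c) = 1$ and hence $d^+(\rho) \geq 1$. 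The main obstacle is precisely the borderline case $(d^+, f) = (0, 2)$ for two-dimensional $\rho$, in which Theorem \ref{THM:main_thm} alone only yields $\delta^\textbf{G}_\Q(\rho) \leq 1$; the above matrix computations rule this case out — automatically in the imaginary case and by the real-place hypothesis in the real case — and combining everything gives $\delta^\textbf{G}_K = 0$, proving the corollary.
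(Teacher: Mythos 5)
Your argument is correct and is essentially the paper's own proof (case (d) of Theorem \ref{thm:list_new_cases_gk}): decompose $\Ind_K^\Q\mathds{1}_K$ into inductions $\Ind_k^\Q\chi$ of characters of $\Gal(K/k)$, treat the reducible case via one-dimensional characters of $G_\Q$, and in the irreducible two-dimensional case apply Theorem \ref{THM:main_thm} with $f\leq 2$ and $d^+\geq 1$ to force the defect below $1$. The only cosmetic difference is that you verify $d^+\geq 1$ by an explicit computation of $\rho(c)$, whereas the paper invokes Lemma \ref{lem:properties_invariants_d_d+_and_f}~(3).
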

	Theorem \ref{thm:list_new_cases_gk} provides a more extensive list of number fields for which the Gross-Kuz'min conjecture holds unconditionally. The first part of Corollary \ref{CORO:gk_dihedral} is also proven in \cite{hIMC} when $p\neq 2$ using the language of Selmer groups.
	
	We highlight in the last part of this article some interesting connections between non-cyclotomic analogues of the Gross-Kuz'min conjecture and algebraic independence of $p$-adic logarithms of units of number fields. 
	
	Given an arbitrary $\Zp$-extension $K_\infty$ of $K$, we will define a map $\sL_{K_\infty/K}$ specializing to $\sL_K$ if $K_\infty$ is the cyclotomic extension of $K$. As noted in \cite{kisilevsky,jaulentsands}, there do exist examples of $\Zp$-extensions $K_\infty/K$ for which $\delta_{K_\infty/K}^\textbf{G}>0$, but a conjectural description of all such $K_\infty/K$ is still missing.
	
	In the next theorem, we fix an embedding $\ob{\bQ}\subset \ob{\bQ}_p$ and we let $\Lambda$ be the $\ob{\bQ}$-linear subspace of $ \ob{\bQ}_p$ generated by $1$ and by $p$-adic logarithms of non-zero algebraic numbers.
	
	\begin{THM}\label{THM:non-cyclotomic_gk}
		Let $k$ be an imaginary quadratic field, and $K$ an abelian extension of $k$ in which $p$ splits completely. 
		Then there exist at most finitely many distinct $\Zp$-extensions $k_\infty$ of $k$ for which $\delta_{Kk_\infty/K}^\textbf{G}>0$. Moreover, no such $\Zp$-extensions exist if the polynomial 
		\[XYZ^2-(AX-BY)(CX-DY)\in\bZ[A,B,C,D,X,Y,Z]\]
		does not vanish on any $7$-tuple $(a,b,c,d,x,y,z)\in\Lambda^7$ which form a $\ob{\bQ}$-linearly independent set.
	\end{THM}
	This last condition should be true according to the weak $p$-adic Schanuel conjecture. In Proposition \ref{prop:jaulentsands} we illustrate Theorem \ref{THM:non-cyclotomic_gk} with a classical application to the semi-simplicity of Iwasawa modules attached to $Kk_\infty/K$. 
	
	Theorem \ref{THM:non-cyclotomic_gk} can be generalized to arbitrary base fields $k$ having at most $r$ linearly disjoint $\Zp$-extensions with $r\leq 2$ (Theorem \ref{thm:non_cyclotomic_defects}). The main idea is that, under our assumption on $p$, one can parameterize $\Zp$-extensions of $k$ by points on a $(r-1)$-dimensional linear subspace $L$ of $\bP^{n-1}(\Qp)$, where $n=[k:\Q]$. The condition $\delta_{Kk_\infty/K}^\textbf{G}>0$ then cuts out a closed subvariety $\sC$ of $L$ given by polynomial equations with coefficients in $\Lambda_0:=\log_p(\cO_K[\tfrac{1}{p}]^\times)\subset \ob{\bQ}_p$. We then exploit the fact that any linear (resp. algebraic) independence between elements of $\Lambda_0$ implies strong conditions on the $\ob{\Q}$-points (resp. the $\ob{\Q}_p$-points) of $\sC$.
	
	Theorem \ref{THM:non-cyclotomic_gk} was inspired by Betina-Dimitrov's work \cite{betinadimitrovKatz} where the authors show the non-vanishing of a certain $\cL$-invariant for Katz's $p$-adic $L$-function restricted to the anticyclotomic $\Zp$-extension. In fact, their result generalizes to any $\Zp$-extension with non-transcendental slope. We expect that our techniques can give further results on the non-vanishing of $\cL$-invariants in more general contexts. 
	
	The paper is structured as follows. In Section \ref{sec:transcendence} we recall all the classical results in $p$-adic transcendence theory which we make use of. In Section \ref{sec:regulator} we describe Leopoldt's and Gross's defects via class field theory and we show that they are compatible with Artin formalism. Our main results and corollaries are proven in Section \ref{sec:bounds}, except for Theorem \ref{THM:non-cyclotomic_gk} whose proof is postponed to Section \ref{sec:vanishing_locus}.
	
	\subsection*{Acknowledgments}
	The author would like to thank Dominik Bullach and Lassina Demb\'el\'e for reading and providing comments on a preliminary version of this paper. This research is supported by the Luxembourg National Research Fund, Luxembourg, INTER/
	ANR/18/12589973 GALF.

	\section{$p$-adic transcendence theory}\label{sec:transcendence}
	Throughout this section we fix an embedding $\iota_p \colon \ob{\bQ}\hookrightarrow\ob{\bQ}_p$, allowing us to view algebraic numbers as $p$-adic numbers. The following very strong conjecture describes the algebraic dependence between logarithms of algebraic numbers (see \cite[Conjecture 3.10]{mazurcalegari}).
	
	\begin{conjecture*}[Weak $p$-adic Schanuel conjecture]
		Let $\alpha_1,\ldots,\alpha_n$ be nonzero algebraic numbers. If $\log_p(\alpha_1),\ldots,\log_p(\alpha_n)$ are linearly independent over $\bQ$, then they are algebraically independent over ${\bQ}$. 
	\end{conjecture*}
	
	We recall some classical results of Brumer, Waldschmidt and Roy and deduce some consequences that turn out to be useful in the study of the Gross-Kuz'min conjecture.
	\subsection{The Baker-Brumer theorem}
	Brumer \cite{brumer} extended Baker's method to the $p$-adic setting and proved the following theorem on linear independence of logarithms.
	\begin{theorem}[Baker-Brumer theorem]
		Let $\alpha_1,\ldots,\alpha_n$ be nonzero algebraic numbers. If $\log_p(\alpha_1),\ldots,\log_p(\alpha_n)$ are linearly independent over $\bQ$, then they are linearly independent over $\ob{\bQ}$. 
	\end{theorem}
	Recall that $\log_p$ is normalized so that we have $\log_p(p)=0$.
	\begin{proposition}\label{prop:log_p_almost_injective}
		Let $H\subset \ob{\bQ}$ be a number field. The $\ob{\bQ}$-linear extension $\log_p \colon \ob{\bQ} \otimes_\bZ H^\times \rightarrow \ob{\bQ}_p$, $c\otimes x \mapsto c\log_p(\iota_p(x))$ of the $p$-adic logarithm has kernel the line $p^{\ob{\bQ}}$ spanned by $1\otimes p$.
	\end{proposition}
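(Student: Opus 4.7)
\emph{Plan.} The inclusion $\ob{\Q}\cdot(1\otimes p)\subseteq\ker\log_p$ is immediate from the normalization $\log_p(p)=0$, so all the work is in the reverse inclusion. Given $\tau=\sum_{i}c_i\otimes x_i\in\ker\log_p$, I will first reduce to the finitely generated subgroup $V\subseteq H^\times$ generated by $p$, the $x_i$, and $\mu(H)$; flatness of $\ob{\Q}$ over $\Z$ ensures $\ob{\Q}\otimes_{\Z} V\hookrightarrow\ob{\Q}\otimes_{\Z}H^\times$, so it suffices to analyze the kernel of $\log_p$ on $\ob{\Q}\otimes V$. Since torsion elements die after tensoring with $\ob{\Q}$, I work modulo $V_{\tors}$ and obtain a finitely generated free abelian group $V/V_{\tors}$.

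The key structural observation is that the subgroup $W:=(V\cap\ker\log_p)/V_{\tors}$ of $V/V_{\tors}$ (where $\ker\log_p$ here denotes the kernel of $\log_p$ acting on $H^\times$) is \emph{saturated}: indeed, the quotient $V/(V\cap\ker\log_p)$ injects via $\log_p$ into the torsion-free group $\ob{\Q}_p$. I can therefore choose a $\Z$-basis $y_1,\ldots,y_r$ of $V/V_{\tors}$ whose first $s$ elements form a basis of $W$. By the definition of $W$, the remaining logarithms $\log_p(y_{s+1}),\ldots,\log_p(y_r)$ are $\Q$-linearly independent, because any $\Z$-linear relation among them would produce a nonzero element of $W$ lying in the complementary span, contradicting the basis property. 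Applying the Baker--Brumer theorem promotes this to $\ob{\Q}$-linear independence, so that
\[
\ker\bigl(\log_p\colon\ob{\Q}\otimes V\to\ob{\Q}_p\bigr)\;=\;\ob{\Q}\text{-span of }\{1\otimes y_1,\ldots,1\otimes y_s\}\;=\;\ob{\Q}\otimes(V\cap\ker\log_p).
\]

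The final step is to show that $\ob{\Q}\otimes(V\cap\ker\log_p)$ coincides with the line $\ob{\Q}\cdot(1\otimes p)$ inside $\ob{\Q}\otimes V$. For this I invoke the classical description of the kernel of the Iwasawa logarithm on $\ob{\Q}_p^\times$ as $p^{\Q}\cdot\mu_\infty$: any $x\in H^\times$ with $\log_p(x)=0$ can be written as $x=p^{a/b}\zeta$ in $\ob{\Q}^\times$ for some $a\in\Z$, $b\in\Z_{>0}$, and root of unity $\zeta$; then $\zeta^b=x^b/p^a$ lies in $H\cap\mu_\infty=\mu(H)\subseteq V_{\tors}$, which in $\ob{\Q}\otimes V$ yields the relation $b\cdot(1\otimes x)=a\cdot(1\otimes p)$ and hence $1\otimes x\in\ob{\Q}\cdot(1\otimes p)$. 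I expect this final transcendence-free passage to be the subtlest conceptual point: the Baker--Brumer step only identifies the kernel with the a priori multi-dimensional subspace $\ob{\Q}\otimes(V\cap\ker\log_p)$, and it is the structure of $\ker\log_p\subseteq\ob{\Q}_p^\times$ that forces this subspace to collapse onto the single line spanned by $1\otimes p$.
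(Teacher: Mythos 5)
Your proof is correct, and it rests on the same two essential ingredients as the paper's — the Baker--Brumer theorem plus the fact that the local Iwasawa logarithm kills only ``$p$-power times root of unity'' elements — but it organizes them in the opposite order, via a different decomposition. The paper splits $\ob{\Q}\otimes_{\Z}H^\times=(\ob{\Q}\otimes\cT)\oplus p^{\ob{\Q}}$ at the outset, where $\cT$ is the group of $x\in H^\times$ that are units at the place determined by $\iota_p$; since $\log_p$ has torsion kernel on local units, multiplicatively independent elements of $\cT$ have $\Q$-linearly independent logarithms, and Baker--Brumer immediately gives injectivity of $\log_p$ on $\ob{\Q}\otimes\cT$, finishing the proof in two lines. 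You instead postpone the arithmetic input: after reducing to a finitely generated subgroup $V$ (flatness), you use saturation of $(V\cap\ker\log_p)/V_{\tors}$ together with Baker--Brumer to identify $\ker\bigl(\log_p\colon\ob{\Q}\otimes V\to\ob{\Q}_p\bigr)$ with $\ob{\Q}\otimes(V\cap\ker\log_p)$, and only then collapse this subspace onto the line $\ob{\Q}\cdot(1\otimes p)$ using the classical description of the kernel of the Iwasawa logarithm on $\ob{\Q}_p^\times$ as $p^{\Q}\cdot\mu_\infty$ (each step, including the passage $x=p^{a/b}\zeta\Rightarrow b(1\otimes x)=a(1\otimes p)$, checks out). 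The two local facts invoked are equivalent, so the mathematical content is the same; the paper's valuation-based splitting shortcuts your finitely-generated/adapted-basis bookkeeping, while your version isolates the (correct and mildly informative) intermediate statement that the kernel is spanned by actual elements of $H^\times$ annihilated by $\log_p$, with transcendence used only to rule out relations among logarithms of elements outside that kernel.
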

	\begin{proof}
		Let $H_p$ be the completion of $\iota_p(H)$ inside $\ob{\Q}_p$, let $\cO^\times_{H_p}$ be its unit group and consider the abelian group $\cT=\{x\in H^\times \ \colon\ \iota_p(x)\in \cO_{H_p}^\times\}$. Then we clearly have $\ob{\bQ}\otimes H^\times = \left(\ob{\bQ} \otimes \cT\right) \bigoplus p^{\ob{\bQ}}$. Moreover, the $p$-adic logarithm map is injective on $\cO_{H_p}^\times$ modulo the torsion, so multiplicatively independent numbers $\alpha_1,\ldots,\alpha_n\in \cT$ have $\ob{\bQ}$-linearly independent $p$-adic logarithms by the Baker-Brumer theorem. This shows that the restriction of $\log_p$ to $\ob{\bQ} \otimes \cT$ is injective, hence $\ker (\log_p) = p^{\ob{\bQ}}$.
	\end{proof}
	\subsection{Waldschmidt's and Roy's theorem}
	Recall that $\Lambda$ is the $\ob{\bQ}$-linear subspace of $ \ob{\bQ}_p$ generated by $1$ and by $p$-adic logarithms of non-zero algebraic numbers. 
	
	Extensions of Baker's method due to Waldschmidt and Roy give a lower bound for the rank of matrices with coefficients in $\Lambda$. To each matrix $M$ with coefficients in $\ob{\bQ}_p$, of size $m\times \ell$, they assign a number $\theta(M)$ defined as the minimum of all ratios $\frac{\ell'}{m'}$ where $(m',\ell')$ runs among the pairs of integers satisfying $0<m'\leq m$ and $0\leq \ell'\leq \ell$, for which there exist matrices $P\in \GL_m(\ob{\bQ})$ and $Q\in\GL_\ell(\ob{\bQ})$ such that the product $PMQ$ can be written as
	\[\begin{pmatrix}
		M' & 0 \\ N & M''
	\end{pmatrix}\]
	with $M'$ of size $m' \times \ell'$. Note that $\theta(M) \leq \tfrac{\ell}{m}$ with equality if all the entries of $M$ are $\ob{\bQ}$-linearly independent. The following theorem is Roy's sharpening of Waldschmidt theorem (\cite[Théorème 2.1.p]{waldschmidt}, \cite[Corollary 1]{roy}).
	\begin{theorem}\label{thm:waldschmidt_roy}
		Let $M$ be a matrix with coefficients in $\Lambda$, of size $m\times \ell$ with $m,\ell>0$, and let $n$ be its rank. We have
		$$n \geq \frac{\theta(M)}{1+\theta(M)}\cdot m.$$
	\end{theorem}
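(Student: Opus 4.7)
This is a classical result in $p$-adic transcendence theory due to Waldschmidt and Roy which sharpens the Baker-Brumer theorem from a linear-independence statement into a rank bound for matrices of logarithms. My plan is to follow the Baker-style $p$-adic transcendence method; the full technical estimates are the content of the cited works \cite{waldschmidt,roy}.

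The inequality is trivial when $n=m$, so assume $n<m$; the goal then becomes $\theta(M)(m-n)\leq n$. Since both sides are invariant under multiplication by matrices in $\GL_m(\ob{\bQ})$ on the left and $\GL_\ell(\ob{\bQ})$ on the right, one has considerable freedom to normalize $M$. The $\ob{\bQ}_p$-rank condition provides $m-n$ independent $\ob{\bQ}_p$-linear relations among the rows of $M$; the transcendence goal is to promote these $\ob{\bQ}_p$-relations to genuine $\ob{\bQ}$-relations of an appropriate block shape, which is exactly what allows us to read off the bound on $\theta(M)$ from the definition.

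The heart of the argument is the construction, via Siegel's lemma, of an auxiliary polynomial in several variables with algebraic coefficients whose values and derivatives at carefully chosen points encode the hypothetical $\ob{\bQ}_p$-linear relations among the entries of $M$ (which are themselves $p$-adic logarithms of algebraic numbers, up to the scalar $1$). A $p$-adic Schwarz lemma then extrapolates the vanishing to further points, and a zero estimate of Philippon-Nesterenko-Waldschmidt type forces the auxiliary polynomial into a highly structured ideal. An arithmetic Liouville-type lower bound on its nonzero values finally yields a contradiction unless the predicted $\ob{\bQ}$-block structure of $M$ is present.

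The main obstacle, and the technical innovation distinguishing Waldschmidt's and Roy's results from Brumer's linear version, is the uniform combinatorial tracking of the invariant $\theta(M)$: instead of extracting a single $\ob{\bQ}$-linear relation among logarithms one must produce an entire block decomposition witnessing the minimum ratio $\ell'/m'$ in the definition of $\theta$. Roy's matrix formulation in \cite{roy} is designed precisely to handle this uniformity efficiently. For the applications of the present paper I would therefore invoke the theorem as a black box rather than rederive the underlying transcendence estimates.
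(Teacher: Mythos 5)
The paper offers no proof of this statement at all: it is quoted as a known result, attributed to Waldschmidt \cite[Th\'eor\`eme 2.1.p]{waldschmidt} and Roy \cite[Corollary 1]{roy}, which is exactly what you end up doing by invoking it as a black box. Your sketch of the Baker-style machinery (auxiliary polynomial via Siegel's lemma, $p$-adic Schwarz lemma, zero estimate, Liouville bound, with the block structure tracking $\theta(M)$) is a fair summary of those sources, so your treatment matches the paper's.
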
 
	
	Roy also deduced a useful corollary for $3\times 2$ matrices from Theorem \ref{thm:waldschmidt_roy} in \cite[Corollary 2]{roy}.
	\begin{corollaire}[Strong six exponentials theorem]\label{coro:strong_six_exponentials}
		Let $M$ be a $(3\times 2)$-matrix with coefficients in $\Lambda$. If the rows of $M$ are $\ob{\Q}$-linearly independent, and if the columns of $M$ are also $\ob{\Q}$-linearly independent, then $M$ has rank 2.
	\end{corollaire}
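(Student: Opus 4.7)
The plan is to argue by contradiction: assuming $\rk(M) \leq 1$, Theorem~\ref{thm:waldschmidt_roy} (with $m = 3$) forces $\theta(M) \leq 1/2$, so some pair $(m', \ell')$ with $\ell'/m' \leq 1/2$ is achievable in the sense of the definition of $\theta$. I would rule out each of the five candidate pairs $(1,0), (2,0), (3,0), (2,1), (3,1)$, which then gives $\theta(M) > 1/2$ and hence $\rk(M) > 1$ by Theorem~\ref{thm:waldschmidt_roy}, the desired contradiction.

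The pairs $(m', 0)$ can be disposed of uniformly: vanishing of the first $m'$ rows of $PMQ$ implies, since $Q$ is invertible, the vanishing of the first $m'$ rows of $PM$, which produces nontrivial $\ob{\bQ}$-linear relations among the rows of $M$, contradicting the row-independence hypothesis. Dually, the pair $(3, 1)$ is excluded by column independence: vanishing of the second column of $PMQ$ yields a nonzero $q_2 \in \ob{\bQ}^2$ with $M q_2 = 0$.

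The main case is $(2, 1)$, which I would treat by bringing $M$ to the normal form $\begin{pmatrix} a & 0 \\ b & 0 \\ c & d \end{pmatrix}$ with entries in $\Lambda$, via invertible matrices $P \in \GL_3(\ob{\bQ})$ and $Q \in \GL_2(\ob{\bQ})$. Since $\ob{\bQ}$-independence of rows and columns is preserved under such multiplication, a short linear-algebra analysis shows that column independence forces $d \neq 0$, and row independence then forces $a$ and $b$ to be $\ob{\bQ}$-linearly independent elements of $\Lambda$, hence in particular both nonzero. The $2\times 2$ minor extracted from rows $\{1, 3\}$ then has determinant $ad \neq 0$, so $\rk(M) = 2$, contradicting our assumption. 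The only deep input here is Theorem~\ref{thm:waldschmidt_roy} itself; everything else reduces to elementary linear algebra over $\ob{\bQ}$ and $\ob{\bQ}_p$, so I do not anticipate any serious obstacle.
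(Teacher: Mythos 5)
Your argument is correct: the list of pairs $(m',\ell')$ with $\ell'/m'\le 1/2$ is exhaustive, the exclusions of $(m',0)$ and $(3,1)$ via row and column independence (which is indeed preserved under multiplication by $P\in\GL_3(\ob{\Q})$ and $Q\in\GL_2(\ob{\Q})$) are sound, and in the $(2,1)$ normal form the entries $a$ and $d$ are forced to be nonzero, giving a nonvanishing $2\times 2$ minor and hence rank $2$. This is essentially the same approach as the paper, which gives no proof of its own but cites Roy's Corollary 2, itself deduced from Theorem \ref{thm:waldschmidt_roy} in just this way.
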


	\section{Regulator maps and class groups}\label{sec:regulator}

	\subsection{Galois cohomology}\label{sec:galois_cohomology}
	For all fields $L\subset \ob{\bQ}$ and all finite sets $S$ of places of $L$ containing $S_p(L)$, we let $X(L)$ (resp. $X'_S(L)$) be the Galois group of the maximal abelian pro-$p$ extension of $L$ which is unramified everywhere  (resp. unramified everywhere and totally split at all $v\in S$). If $S=S_p(L)$, we simply put $X'(L)=X'_S(L)$. Given a $\Zp$-extension $K_\infty=\bigcup_n K_n$ of $K$ with Galois group $\Gamma$, we have $X(K_\infty)=\varprojlim_n X(K_n)$ and $X_S'(K_\infty)=\varprojlim_n X_S'(K_n)$, the transition maps being the restriction maps. Therefore, $X(K_\infty)$ and $X'_S(K_\infty)$ are modules over the Iwasawa algebra $\Zp[[\Gamma]]$. They are finitely generated torsion as shown by Iwasawa \cite{iwasawa1973zl}. We let 
	\[\delta^\textbf{G}_{K_\infty/K}:=\rk_{\Zp} X'(K_\infty)_\Gamma,\]
	where $(-)_\Gamma$ means $\Gamma$-coinvariants. If $K_{\infty}$ is the cyclotomic $\Zp$-extension $K_{\cyc}$ of $K$ we simply write $\delta^\textbf{G}_K$ for $\delta^\textbf{G}_{K_\infty/K}$. We will later see that this definition is compatible with that of the introduction. One motivation in classical Iwasawa theory to compute $\delta^\textbf{G}_{K_\infty/K}$ originates in the following simple result by Jaulent and Sands \cite[Proposition 6]{jaulentsands}.
	
	\begin{proposition}\label{prop:jaulentsands}
		Let $\gamma$ be a topological generator of $\Gamma$. If no $p$-adic prime of $K$ splits completely in $K_\infty$ and if $\delta^\textbf{G}_{K_\infty/K}=0$, then $\gamma-1$ acts semi-simply on $X(K_\infty)$. That is, $(\gamma-1)^2$ does not divide the elements $P_i\in \Zp[[\Gamma]]$ appearing in any elementary module $\bigoplus_i \Zp[[\Gamma]]/(P_i)$ pseudo-isomorphic to $X(K_\infty)$.
	\end{proposition}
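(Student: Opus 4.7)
The plan is to reformulate the semisimplicity of $T := \gamma - 1$ on $Y := X(K_\infty)$ as the numerical equality $v_T(\mathrm{char}(Y)) = \rk_{\Zp} Y^\Gamma$, then deduce it by a counting argument. Indeed, for any finitely generated torsion $\Zp[[\Gamma]]$-module $M$, a direct inspection of its elementary decomposition shows that $v_T(\mathrm{char}(M)) \geq \rk_{\Zp} M^\Gamma$ always holds, with equality if and only if no elementary divisor of $M$ is divisible by $T^2$. Setting $Y' := X'(K_\infty)$ and letting $D$ denote the submodule of $Y$ generated by the decomposition groups $D_\mathfrak{P}$ of the $p$-adic primes of $K_\infty$, we have the short exact sequence $0 \to D \to Y \to Y' \to 0$. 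The hypothesis $\delta^\textbf{G}_{K_\infty/K} = 0$ means $Y'_\Gamma$ is finite, equivalently $T \nmid \mathrm{char}(Y')$; hence multiplicativity of characteristic ideals together with the snake lemma give $v_T(\mathrm{char}(Y)) = v_T(\mathrm{char}(D))$ and $r := \rk_{\Zp} Y^\Gamma = \rk_{\Zp} D^\Gamma$.

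Next I would describe $D$ explicitly via Frobenius. Since $Y$ is abelian, the decomposition subgroups at conjugate primes coincide inside $Y$, so each $D_\mathfrak{P}$ is procyclic (generated by $\Frob_\mathfrak{P}$) and its own stabilizer $\Gamma_\mathfrak{p} \subset \Gamma$ acts trivially on it. Under the hypothesis that no $p$-adic prime of $K$ splits completely in $K_\infty$, each $\Gamma_\mathfrak{p}$ is open in $\Gamma$; the assignment $e_\mathfrak{p} \mapsto \Frob_{\mathfrak{P}_\mathfrak{p}}$ then defines a surjection
\[
\phi\colon E := \bigoplus_{\mathfrak{p}\in S_p(K)} \Zp[\Gamma/\Gamma_\mathfrak{p}] \twoheadrightarrow D,
\]
which extends to a four-term exact sequence $0 \to K \to E \to Y \to Y' \to 0$ with $K := \ker\phi$. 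Each summand $\Zp[\Gamma/\Gamma_\mathfrak{p}] \cong \Zp[[T]]/\bigl((1+T)^{p^{n_\mathfrak{p}}}-1\bigr)$ is cyclic with annihilator of $T$-valuation exactly one, and its $\Gamma$-invariants are $\Zp$-free of rank one (generated by the norm element), so one computes $v_T(\mathrm{char}(E)) = |S_p(K)| = \rk_{\Zp} E^\Gamma$.

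The end-game is a squeeze. Multiplicativity of characteristic ideals on the four-term sequence, combined with $v_T(\mathrm{char}(Y'))=0$, yields $v_T(\mathrm{char}(K)) + v_T(\mathrm{char}(Y)) = |S_p(K)|$; simultaneously, the snake lemma applied to $0\to K\to E\to D\to 0$ gives $\rk_{\Zp} K^\Gamma \geq |S_p(K)| - r$, because the image of $E^\Gamma$ in $D^\Gamma$ has $\Zp$-rank at most $r$. Combining with the general inequalities $v_T(\mathrm{char}(Y)) \geq r$ and $v_T(\mathrm{char}(K)) \geq \rk_{\Zp} K^\Gamma$ recalled in the first paragraph, we obtain
\[
|S_p(K)| = v_T(\mathrm{char}(K)) + v_T(\mathrm{char}(Y)) \geq (|S_p(K)|-r) + r = |S_p(K)|,
\]
which forces equality throughout. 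In particular $v_T(\mathrm{char}(Y)) = r = \rk_{\Zp} Y^\Gamma$, the asserted semisimplicity of $\gamma - 1$ on $Y$.

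The hard part will be the verification that $\Gamma_\mathfrak{p}$ acts trivially on $D_\mathfrak{P}$, which relies on the commutativity of $Y$ rather than on any splitting hypothesis for $\mathfrak{p}$: decomposition subgroups at distinct primes above $\mathfrak{P}$ in the maximal unramified abelian pro-$p$ extension of $K_\infty$ are conjugate, hence equal in the abelian group $Y$, and a lift of $\gamma\in\Gamma_\mathfrak{p}$ to $\Gal$ over $K$ then conjugates $\Frob_\mathfrak{P}$ to itself. Once this structural input is in place, the counting argument runs unobstructed.
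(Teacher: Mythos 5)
Your proof is correct, and in fact the paper does not prove this proposition at all: it is quoted from Jaulent--Sands (\cite[Proposition 6]{jaulentsands}), so there is no internal argument to compare with. Your reduction of semi-simplicity at $T=\gamma-1$ to the intrinsic numerical equality $v_T(\mathrm{char}\, X(K_\infty))=\rk_{\Zp} X(K_\infty)^\Gamma$ is sound (and correctly handles the ``any elementary module'' quantifier in the statement), the four-term sequence $0 \to \ker\phi \to \bigoplus_{\gp\in S_p(K)}\Zp[\Gamma/\Gamma_\gp] \to X(K_\infty) \to X'(K_\infty) \to 0$ is exact under the stated hypotheses, and the squeeze via multiplicativity of characteristic ideals together with $v_T(\mathrm{char}\,X'(K_\infty))=0$ (which is exactly $\delta^\textbf{G}_{K_\infty/K}=0$) does force $v_T(\mathrm{char}\,X(K_\infty))=\rk_{\Zp}X(K_\infty)^\Gamma$. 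The hypothesis that no $p$-adic prime splits completely is used exactly where it must be, to make each $\Gamma_\gp$ open so that the induced module is $\Zp[[\Gamma]]$-torsion with $v_T=1$ and $\Zp$-rank of invariants $1$ per prime of $K$.

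One step deserves tightening: your verification that $\Gamma_\gp$ acts trivially on $D_\gP$ argues that a lift of $\gamma\in\Gamma_\gp$ ``conjugates $\Frob_\gP$ to itself'', which presupposes that the residue field of $K_\infty$ at $\gP$ is finite; this fails precisely when $\gp$ is unramified in $K_\infty/K$ (then $\Gamma_\gp$ is the residue extension and there is no Frobenius of $L_\infty/K_\infty$ at $\gP$, only procyclicity of $D_\gP$). The claim is nevertheless true and the cleanest fix stays local: if $Z\subset \Gal(L_\infty/K)$ is the decomposition group of a prime above $\gp$ and $I_Z$ its inertia subgroup, then $D_\gP=Z\cap X(K_\infty)$ meets $I_Z$ trivially because $L_\infty/K_\infty$ is unramified, hence $D_\gP$ injects $Z$-equivariantly into the quotient $Z/I_Z$, which is the Galois group of an extension of the finite field $\kappa_\gp$ and therefore abelian; so conjugation by any element of $Z$, in particular by a lift of any $\gamma\in\Gamma_\gp$, is trivial on $D_\gP$. (Also, purely cosmetically, avoid writing $K$ for $\ker\phi$, since $K$ is already the base field.) With that patch your argument is complete.
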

	
	Let $S_0 \supseteq S_p(K)\bigcup S_\infty(K)$ be a finite set of places of $K$. For any extension $L$ of $K$ and any discrete (resp. compact) $G_L$-module $M$ which is unramified outside the places of $L$ above $S_0$, we consider for all $i\geq 0$ the $S_0$-ramified $i$-th cohomology group (resp. continuous cohomology group) $\HH^i_{S_0}(L,M)=\HH^i(\Gal(L_{S_0}/L),M)$, where $L_{S_0}/L$ is the largest extension of $L$ which is unramified outside the places of $L$ above ${S_0}$. Given any subset $S\subset {S_0}$, let
	$$\Sha_{S}^i(L,M) = \ker\left[ \HH^i_{{S_0}}(L,M) \longrightarrow \prod_{v\in S} \HH^i(L_v,M) \times \prod_{v\in S_0-S} \HH^i(L_v^\ur,M)\right],$$
	where $L_v^\ur/L_v$ denotes the maximal unramified extension of $L_v$ (so $\bR^\ur=\bR$ in particular) and the maps above are the usual localization maps. Note that the definition of $\Sha_{S}^i(L,M)$ does not depend on the choice of $S_0$. We simply write $\Sha^i(L,M)$ instead of $\Sha_{S}^i(L,M)$ if $S=S_p(L)\bigcup S_\infty(L)$. We also write $M^*$ for the Pontryagin dual $\Hom_{\Zp}(M,\Qp/\Zp)$ of a $\Zp$-module $M$.
	
	\begin{lemme}\label{lem:class_groups_and_sha}
		Let $L\subset \ob{\bQ}$ be a number field and let $S\supset S_p(L)\bigcup S_\infty(L)$ be a finite set of places of $L$. There are canonical isomorphisms $\Sha_S^2(L,\Zp(1)) \simeq \Sha_S^1(L,\Qp/\Zp)^* \simeq X'_S(L)$.
	\end{lemme}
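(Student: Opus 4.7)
My plan is to split the chain $\Sha_S^2(L,\Zp(1))\simeq\Sha_S^1(L,\Qp/\Zp)^*\simeq X'_S(L)$ into two independent isomorphisms and prove them separately: the second via class field theory, and the first via Poitou-Tate global duality.

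For the identification $\Sha_S^1(L,\Qp/\Zp)^*\simeq X'_S(L)$, I would first interpret $H^1_{S_0}(L,\Qp/\Zp)$ as the group of continuous characters $\chi\colon G_L\to \Qp/\Zp$ unramified outside $S_0$. The local condition at $v\in S$ (vanishing in $H^1(L_v,\Qp/\Zp)$) then unwinds to $\chi$ being trivial on the decomposition subgroup at $v$, i.e., the extension cut out by $\chi$ being totally split at $v$; the local condition at $v\in S_0\setminus S$ (vanishing in $H^1(L_v^\ur,\Qp/\Zp)$) unwinds to $\chi$ being unramified at $v$. Together, and combined with automatic unramifiedness outside $S_0$, these conditions mean exactly that $\chi$ factors through $X'_S(L)$, yielding $\Sha_S^1(L,\Qp/\Zp)=\Hom_\cts(X'_S(L),\Qp/\Zp)$. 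Applying Pontryagin duality to the finitely generated $\Zp$-module $X'_S(L)$ then produces the desired isomorphism.

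For the first isomorphism, I would apply the Poitou-Tate nine-term exact sequence to the pair of finite modules $(\mu_{p^n},\Z/p^n)$ and pass to the inverse limit in $n$ to recover $\Zp(1)$ and $\Qp/\Zp$. The content of the argument is that the local conditions defining the two $\Sha_S$-groups are annihilators of each other under local Tate duality: at $v\in S$ one imposes the strict (zero) condition on both sides, and at $v\in S_0\setminus S$ the unramified condition on $H^1(L_v,\Qp/\Zp)$ pairs with the unramified condition on $H^1(L_v,\Zp(1))$, while the corresponding condition on $H^2(L_v,\Zp(1))$ becomes vacuous thanks to the vanishing $H^2(L_v^\ur,\Zp(1))=0$ for finite $v\nmid p$ (the $p$-cohomological dimension of $G_{L_v^\ur}$ being $1$). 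Feeding these orthogonality checks into the nine-term sequence yields the duality $\Sha_S^2(L,\Zp(1))\simeq \Sha_S^1(L,\Qp/\Zp)^*$.

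The main technical obstacle I anticipate is book-keeping the Poitou-Tate formalism carefully for this precise choice of local conditions — in particular the interaction at archimedean places when $p=2$, and justifying the passage to the inverse limit (or, equivalently, running a direct argument with $\Zp$- and $\Qp/\Zp$-coefficients throughout). Apart from these routine technicalities, both steps reduce to standard ingredients in Galois cohomology and class field theory, and I would not expect any genuinely new ideas to be required.
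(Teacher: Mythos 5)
Your proposal is correct and follows essentially the same route as the paper: the second isomorphism by interpreting $\HH^1_S(L,\Qp/\Zp)$ as characters and translating the local conditions via class field theory, and the first by Poitou--Tate duality (the paper simply cites \cite[Theorem 4.10 (a)]{milneADT}). The only difference is cosmetic: the paper implicitly uses the noted independence of the choice of $S_0$ to take $S_0=S$ and quote the classical duality between $\Sha^1$ and $\Sha^2$ directly, whereas you re-derive the statement for general $S_0$ via orthogonality of the mixed local conditions, which is a more laborious but equally valid execution of the same idea.
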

	\begin{proof}
		The first isomorphism is given by the Poitou-Tate duality theorem \cite[Theorem 4.10 (a)]{milneADT}. Since $\HH^1 _S(L,\Qp/\Zp)=\Hom(\Gal(L_S/L),\Qp/\Zp)$, class field theory easily implies $\Sha^1(L,\Qp/\Zp)=\Hom(X'_S(L),\Qp/\Zp)$.
	\end{proof}
	
	The isomorphisms provided by Lemma \ref{lem:class_groups_and_sha} are functorial in $L$ in the sense that, given a finite extension $L'/L$ of number fields, the norm map $X(L') \rightarrow X(L)$ corresponds to the corestriction map (resp. to the Pontryagin dual of the restriction map) $\Sha^2(L',\Z_p(1)) \rightarrow \Sha^2(L,\Zp(1))$ (resp. $\Sha^1(L',\Qp/\Zp)^* \rightarrow \Sha^1(L,\Qp/\Zp)^*$). 
	
	Given any $\Zp$-extension $K_\infty=\bigcup_n K_n$ of $K$ and any finite set $S \supset S_p(K)\bigcup S_\infty(K)$, Lemma \ref{lem:class_groups_and_sha} provides isomorphisms of $\Zp[[\Gamma]]$-modules
	\[\varprojlim_n \Sha_{S}^2(K_n,\Zp(1)) \simeq \Sha_{S}^1(K_\infty,\Qp/\Zp)^* \simeq X'_S(K_\infty).\]
	We now make use of the inflation-restriction exact sequence to study the problem of Galois descent. We have a commutative diagram with exact rows
	\small\begin{equation}
		\label{eq:diagramme_descente}
		\begin{tikzcd}
			0 \ar[r] & \Hom(\Gamma,\Qp/\Zp) \ar[r] \ar[d] & \HH^1_{\{p,\infty\}}(K,\Qp/\Zp) \ar[r] \ar[d] & \HH^1_{\{p,\infty\}}(K_\infty,\Qp/\Zp)^\Gamma \ar[r] \ar[d] & 0  \\
			0 \ar[r] & \bigoplus_v\Hom(\Gamma_v,\Qp/\Zp) \ar[r] & \bigoplus_v\HH^1(K_v,\Qp/\Zp) \ar[r] &  \bigoplus_w \HH^1(K_{\infty,w},\Qp/\Zp),
		\end{tikzcd}
	\end{equation} 
	\normalsize
	where the places $v$ (resp. $w$) of the second row run through all the $p$-adic and archimedean places of $K$ (resp. of $K_\infty$). Here, we have used the fact that $\Gamma$ has cohomological dimension one as it is pro-cyclic.
	
	\begin{proposition}\label{prop:premier_calcul_rang}
		We have $\delta^\textbf{G}_{K_\infty/K}=\dim \ker (\Loc_{K_\infty/K})$, where $\Loc_{K_\infty/K}$ is the localization map
		$$\Loc_{K_\infty/K} \colon \HH^1_{\{p\}}(K,\ob{\Q}_p)/\HH^1(\Gamma,\ob{\Q}_p) \longrightarrow \bigoplus_{\gP\in S_p(K)} \HH^1(K_\gP,\ob{\Q}_p)/\HH^1(\Gamma_\gP,\ob{\Q}_p).$$
	\end{proposition}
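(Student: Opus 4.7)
The plan is to interpret $\delta^\textbf{G}_{K_\infty/K}$ through Pontryagin duality and then perform a diagram chase on (\ref{eq:diagramme_descente}).

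First I would apply Lemma \ref{lem:class_groups_and_sha} at each finite layer $K_n$ (with $S = S_p(K_n) \cup S_\infty(K_n)$) and pass to the inverse limit along Pontryagin duals of the restriction maps; this gives the $\Zp[[\Gamma]]$-isomorphism $X'(K_\infty) \simeq \Sha^1(K_\infty, \Qp/\Zp)^*$ recorded just after Lemma \ref{lem:class_groups_and_sha}. Using the standard identification $(M^*)_\Gamma \simeq (M^\Gamma)^*$ for a discrete $\Gamma$-module $M$, I obtain
\[
\delta^\textbf{G}_{K_\infty/K} \;=\; \rk_{\Zp} X'(K_\infty)_\Gamma \;=\; \operatorname{corank}_{\Zp} \Sha^1(K_\infty, \Qp/\Zp)^\Gamma.
\]

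Next, a diagram chase on (\ref{eq:diagramme_descente}) would show that $\alpha \in \HH^1_{\{p,\infty\}}(K_\infty, \Qp/\Zp)^\Gamma$, lifted to $\tilde\alpha \in \HH^1_{\{p,\infty\}}(K, \Qp/\Zp)$ modulo $\Hom(\Gamma, \Qp/\Zp)$, lies in $\Sha^1(K_\infty, \Qp/\Zp)^\Gamma$ if and only if the localization of $\tilde\alpha$ at each $v \in S_p(K) \cup S_\infty(K)$ belongs to the image of $\Hom(\Gamma_v, \Qp/\Zp) \to \HH^1(K_v, \Qp/\Zp)$. The key input is exactness of the bottom row of (\ref{eq:diagramme_descente}) at the middle term, which follows from the local inflation-restriction sequence applied to each $K_{\infty, w}/K_v$ (with $\Gamma_v$ pro-cyclic of cohomological dimension $\leq 1$). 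Hence $\Sha^1(K_\infty, \Qp/\Zp)^\Gamma$ is identified with the kernel of the quotient localization
\[
\HH^1_{\{p,\infty\}}(K, \Qp/\Zp)/\Hom(\Gamma, \Qp/\Zp) \longrightarrow \bigoplus_v \HH^1(K_v, \Qp/\Zp)/\Hom(\Gamma_v, \Qp/\Zp).
\]

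Finally I would pass to $\ob{\Q}_p$-coefficients, converting $\Zp$-corank into $\ob{\Q}_p$-dimension. Since $\HH^1(K_v, \ob{\Q}_p) = 0$ for every archimedean $v$ (the absolute Galois group is trivial or cyclic of order $2$, while $\ob{\Q}_p$ is torsion-free) and likewise $\HH^1_{\{p,\infty\}}(L, \ob{\Q}_p) = \HH^1_{\{p\}}(L, \ob{\Q}_p)$ for $L = K, K_\infty$, the archimedean places contribute neither to the source nor to the target, and the restricted localization map is precisely $\Loc_{K_\infty/K}$. The only delicate step is the exactness of the bottom row of (\ref{eq:diagramme_descente}), but that is a direct consequence of local inflation-restriction; everything else is formal diagram chasing and bookkeeping.
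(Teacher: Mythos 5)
Your proposal is correct and takes essentially the same route as the paper: both arguments combine Lemma \ref{lem:class_groups_and_sha} with Pontryagin duality to express $\delta^\textbf{G}_{K_\infty/K}$ as the corank of $\Sha^1(K_\infty,\Qp/\Zp)^\Gamma$, identify that group with the kernel of the quotient localization map by a chase in diagram (\ref{eq:diagramme_descente}), discard the archimedean contributions (finite with $\Qp/\Zp$-coefficients, zero with $\ob{\Q}_p$-coefficients), and conclude by the standard comparison between $\Qp/\Zp$- and $\ob{\Q}_p$-coefficient cohomology, which the paper justifies by citing Rubin's appendix and you rightly treat as routine bookkeeping.
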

	\begin{proof}
		By Lemma \ref{lem:class_groups_and_sha}, the kernel of the right vertical map of (\ref{eq:diagramme_descente}) is equal to the Pontryagin dual of $X'(K_\infty)_\Gamma$. Since $\HH^1(K_v,\Qp/\Zp)$ for $v|\infty$ is finite, this implies that $\delta_{K_\infty/K}^\textbf{G}$ is equal to the rank of the Pontryagin dual of the kernel of the natural map 
		$$\HH^1_{\{p\}}(K,\Qp/\Zp)/\HH^1(\Gamma,\Qp/\Zp) \rightarrow \bigoplus_{\gP\in S_p(K)} \HH^1(K_\gP,\Qp/\Zp)/\HH^1(\Gamma_\gP,\Qp/\Zp).$$ 
		To end the proof, it suffices to notice that for $\cG=\Gal(K_{S}/K)$, $\Gal(\ob{K}_\gP/K_\gP)$,  $\Gamma$ or $\Gamma_\gP$, the natural map $\HH^1(\cG,\Zp) \otimes \Qp/\Zp \rightarrow\HH^1(\cG,\Qp/\Zp)$ (resp. $\HH^1(\cG,\Zp) \otimes \ob{\Q}_p\rightarrow \HH^1(\cG,\ob{\Q}_p)$) has finite kernel and cokernel (resp. is an isomorphism), see \cite[Appendix B \S~2]{rubinES}.
	\end{proof}
	
	\begin{remark}
		Since $\HH^1_{\{p\}}(K,\ob{\Q}_p)=\Hom(G_K,\ob{\Q}_p)$ parameterizes the $\Zp$-extensions of $K$, the domain of $\Loc_{K_\infty/K}$ has dimension $r_2 +\delta^\textbf{L}_K$, where $r_2$ is the number of complex places of $K$ (\cite[Theorem 13.4]{washington1997introduction}). In particular, Proposition \ref{prop:premier_calcul_rang} yields an upper bound $\delta^\textbf{G}_{K_\infty/K} \leq r_2 + \delta^\textbf{L}_K$. Therefore, Leopoldt's conjecture for a totally real field $K$ implies the Gross-Kuz'min conjecture for $K$, as already noticed by Kolster in \cite[Corollary 1.3]{kolster}.
	\end{remark}
	
	Given a prime $\gP\in S_p(K)$, let $\Gamma_{\gP}$ be the decomposition subgroup of $\Gamma$ at $\gP$ and denote by $\rec_{\Gamma_{\gP}} \colon K_\gP^\times \to \Gamma_\gP$ the corresponding local reciprocity map. Define also the $\Zp$-module \[\cH_{K_\infty/K}:=\ker\left(\bigoplus_{\gP\in S_p(K)} \Gamma_\gP \longrightarrow \Gamma \right).\] 
	By the usual product formula in class field theory the regulator map
	\begin{equation}\label{eq:def_sL_Gamma}
		\sL_{K_\infty/K} \colon \left\{\begin{array}{ccl}
			\cO_K[\tfrac{1}{p}]^\times & \longrightarrow &\cH_{K_\infty/K} \\
			x & \mapsto & (\rec_{\Gamma_\gP}(x))_\gP,
		\end{array}\right.
	\end{equation}
	is well-defined, and it extends to a $\ob{\Q}_p$-linear map $\cO_K[\tfrac{1}{p}]^{\times,\wedge} \to \cH_{K_\infty/K}^\wedge$ which we still denote by $\sL_{K_\infty/K}$. If $K_\infty=K_{\cyc}$, then the character $\log_p \circ \chi_\cyc \circ \rec_{\Gamma_{\gP}} \colon K_\gP^\times \longrightarrow \Qp$ coincides with $- \log_p \circ \cN_{\gP}$, where $\chi_\cyc$ is the cyclotomic character. Therefore, $\sL_{K_{\cyc}/K}$ is essentially the same as the map $\sL_K$ of the introduction, and we easily see that $\delta_{K_{\cyc}/K}^\textbf{G}=\delta_K^\textbf{G}$.
	
	\begin{proposition}\label{prop:generalisation_jaulent}
		We have $\delta^\textbf{G}_{K_\infty/K}=\dim \coker (\sL_{K_\infty/K})$.
	\end{proposition}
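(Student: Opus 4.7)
My plan is to invoke Proposition \ref{prop:premier_calcul_rang}, which reduces the statement to the equality $\dim_{\ob{\bQ}_p}\coker(\sL_{K_\infty/K})=\dim_{\ob{\bQ}_p}\ker(\Loc_{K_\infty/K})$, and then to deduce this equality from an $\ob{\bQ}_p$-linear duality arising from class field theory. To set things up, I first realize $\coker(\sL_{K_\infty/K})$ as the middle cohomology of the three-term complex of finite-dimensional $\ob{\bQ}_p$-vector spaces
\[
C^\bullet\colon\ \cO_K[\tfrac{1}{p}]^{\times,\wedge} \xrightarrow{(\rec_{\Gamma_\gP})_\gP} \bigoplus_{\gP\in S_p(K)} \Gamma_\gP^\wedge \xrightarrow{\Sigma} \Gamma^\wedge,
\]
where $\Sigma$ is the sum map. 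The global reciprocity law ensures $\Sigma \circ (\rec_{\Gamma_\gP})_\gP = 0$ on global $p$-units (since $K_\infty/K$ is unramified outside $p$ and archimedean places split completely in $\Gamma$), so the image of the left arrow lands in $\ker(\Sigma)=\cH_{K_\infty/K}^\wedge$ and the middle cohomology of $C^\bullet$ is $\coker(\sL_{K_\infty/K})$.

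I then dualize $C^\bullet$ over $\ob{\bQ}_p$. Because $\Gamma$ and each $\Gamma_\gP$ are pro-cyclic pro-$p$-groups, one has canonical isomorphisms $\Hom_{\ob{\bQ}_p}(\Gamma^\wedge,\ob{\bQ}_p)=\HH^1(\Gamma,\ob{\bQ}_p)$ and $\Hom_{\ob{\bQ}_p}(\Gamma_\gP^\wedge,\ob{\bQ}_p)=\HH^1(\Gamma_\gP,\ob{\bQ}_p)$, so the dual complex reads
\[
C^{\bullet,\vee}\colon\ \HH^1(\Gamma,\ob{\bQ}_p) \xrightarrow{\res} \bigoplus_{\gP\mid p}\HH^1(\Gamma_\gP,\ob{\bQ}_p) \xrightarrow{\partial} \Hom_{\ob{\bQ}_p}\!\big(\cO_K[\tfrac{1}{p}]^{\times,\wedge},\ob{\bQ}_p\big),
\]
with $\partial\big((\chi_\gP)\big)(u)=\sum_\gP\chi_\gP(\rec_{\Gamma_\gP}(u))$. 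Since dualization of a finite-dimensional complex preserves the dimension of cohomology, the middle cohomology of $C^{\bullet,\vee}$ has the same $\ob{\bQ}_p$-dimension as $\coker(\sL_{K_\infty/K})$.

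It then remains to identify this middle cohomology with $\ker(\Loc_{K_\infty/K})$. For this I would use the global class field theory isomorphism
\[
\HH^1_{\{p\}}(K,\ob{\bQ}_p) \simeq \ker\!\Big(\bigoplus_{\gP\mid p}\HH^1(K_\gP,\ob{\bQ}_p) \to \Hom_{\ob{\bQ}_p}\!\big(\cO_K[\tfrac{1}{p}]^{\times,\wedge},\ob{\bQ}_p\big)\Big),
\]
in which the arrow sends $(\chi_\gP)$ to $u\mapsto \sum_\gP\chi_\gP(\rec_{K_\gP}(u))$: indeed, any $\ob{\bQ}_p$-valued character of the idèle class group of $K$ that is unramified outside $\{p,\infty\}$ must vanish on the connected components at archimedean places and on the finite group $\mathrm{Cl}(\cO_K[\tfrac{1}{p}])$, so it is determined by its local components above $p$ subject to the product formula. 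Intersecting the right-hand side above with $\bigoplus_\gP\HH^1(\Gamma_\gP,\ob{\bQ}_p)$ and quotienting by the image of $\HH^1(\Gamma,\ob{\bQ}_p)$ produces both the middle cohomology of $C^{\bullet,\vee}$ (by the very definition of $C^{\bullet,\vee}$) and $\ker(\Loc_{K_\infty/K})$ (by the definition of $\Loc_{K_\infty/K}$), completing the identification.

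The principal delicate point is the global class field theory description of $\HH^1_{\{p\}}(K,\ob{\bQ}_p)$ used above: one must check rigorously that the archimedean, non-$p$-adic inertial, and class-group contributions all vanish upon passage to $\ob{\bQ}_p$-coefficients, and reconcile this description with the $\Qp/\Zp$-coefficient statement of Lemma \ref{lem:class_groups_and_sha}. Once this is in place, the chain of equalities $\dim\coker(\sL_{K_\infty/K})=\dim\ker(\Loc_{K_\infty/K})=\delta^{\textbf{G}}_{K_\infty/K}$ follows by formal finite-dimensional linear algebra.
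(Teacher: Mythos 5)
Your argument is correct, and it reaches the same reduction as the paper (via Proposition \ref{prop:premier_calcul_rang}, the point is the equality $\dim\ker(\Loc_{K_\infty/K})=\dim\coker(\sL_{K_\infty/K})$), but it implements the duality differently. The paper identifies the orthogonal complement of $\HH^1(\Gamma_\gP,\ob{\Q}_p)$ with $\ker(\rec_{\Gamma_\gP})$ via Tate's local pairing and then applies Poitou--Tate duality together with the vanishing of $\Sha^1(K,\Qp)$, obtaining directly an isomorphism between $\ker$ of the localization map and $\coker(\oplus_\gP\rec_{\Gamma_\gP})$ (the two sides then differ from $\ker(\Loc_{K_\infty/K})$ and $\coker(\sL_{K_\infty/K})$ by matching one-dimensional contributions of $\HH^1(\Gamma,\ob{\Q}_p)$ and of $\Gamma^\wedge$). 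You instead build the hyperplane $\cH_{K_\infty/K}$ into a three-term complex, dualize it by elementary finite-dimensional linear algebra, and replace the Poitou--Tate input by the explicit id\`elic description of $\HH^1_{\{p\}}(K,\ob{\Q}_p)$ as the characters of $\bigoplus_{\gP\mid p}K_\gP^{\times,\wedge}$ killing the global $p$-units; this is precisely the exactness of the sequence (\ref{eq:sinnott_ses}) that the paper itself states (without proof) in Section \ref{sec:vanishing_locus}, and your sketch of why the archimedean, prime-to-$p$ inertial and $\mathrm{Cl}(\cO_K[\tfrac{1}{p}])$ contributions die with $\ob{\Q}_p$-coefficients is the right justification (divisibility and torsion-freeness of $\ob{\Q}_p$ handle the finite class-group extension step). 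What your route buys is elementarity and explicitness: no Poitou--Tate, no Tate duality, no $\Sha$-vanishing, only id\`elic class field theory plus duality of finite-dimensional complexes. What the paper's route buys is flexibility: the cohomological formulation transports verbatim to twisted coefficients $\ob{\Q}_p(\varphi)$ and to the Bloch--Kato Selmer description, which is exactly how Lemma \ref{lem:expression_alternative_non-cyc_defect} and the remark following Proposition \ref{prop:identification_sL_rho_composante} reuse the argument, whereas your id\`elic version would need to be redone equivariantly for that purpose.
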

	\begin{proof}
		By Kummer theory and local class field theory, Tate's local pairing $\HH^1(K_\gP,\ob{\Q}_p) \times \HH^1(K_\gP,\ob{\Q}_p(1))\rightarrow \ob{\Q}_p$ can be identified with the evaluation map $\Hom(K_\gP^\times,\ob{\Q}_p) \times K_\gP^{\times,\wedge} \rightarrow \ob{\Q}_p$. Therefore, the orthogonal complement of $\HH^1(\Gamma_\gP,\ob{\Q}_p)\subseteq \HH^1(K_\gP,\ob{\Q}_p)$ is simply the kernel of $\rec_{\Gamma_\gP} \colon K_\gP^{\times,\wedge} \twoheadrightarrow \Gamma^\wedge_\gP$. Using the fact that $\Sha^1(K,\Qp)=0$, Poitou-Tate's duality then yields an isomorphism
		{\small 
			\[\ker\left(\HH^1_{\{p\}}(K,\ob{\Q}_p)\to \bigoplus_{\gP\in S_p(K)} \frac{\HH^1(K_\gP,\ob{\Q}_p)}{\HH^1(\Gamma_\gP,\ob{\Q}_p)}\right) \simeq \coker \left(\oplus_\gP \rec_{\Gamma_\gP}:\  \cO_K[\tfrac{1}{p}]^{\times,\wedge} \to \bigoplus_{\gP\in S_p(K)} \Gamma^\wedge_\gP\right).
			\]
			\normalsize}
		In particular, $\dim \ker(\Loc_{K_\infty/K})+1=\dim \coker(\sL_{K_\infty/K})+1$, so Proposition \ref{prop:premier_calcul_rang} yields the desired equality.
	\end{proof}
	
	\subsection{Isotypic components}\label{sec:isotypic}
	We consider in this paragraph the situation where the $\Zp$-extension $K_\infty/K$ comes from the $\Zp$-extension $k_\infty/k$ of a subfield $k$ of $K$, which means that $K_\infty=Kk_\infty$. Assume that $K/k$ is Galois with Galois group $G$. 
	Given an algebraically closed field $\mathbf{Q}$ of characteristic zero (typically, $\ob{\bQ}$ or $\ob{\bQ}_p$), the $\textbf{Q}$-valued representations of $G$ are semi-simple and the regular representation of $G$ splits as
	$$\textbf{Q}[G] = \bigoplus_{\rho} e(\rho)\cdot \textbf{Q}[G] = \bigoplus_\rho W^{\oplus \dim \rho},$$
	where $(W,\rho)$ runs through the set of all the $\textbf{Q}$-valued irreducible representations of $G$ and 
	\begin{equation}\label{eq:idempotent}
	e(\rho)= \frac{\dim \rho}{|G|} \cdot \sum_{g\in G} \Tr(\rho(g^{-1}))g \in \textbf{Q}[G]
	\end{equation}
	is the usual idempotent attached to $\rho$. 
	
	For any finite set of places $S$ of $k$ containing $S_\infty(k)$, let $\cO_K[1/S]^\times$ be the group of $S$-units of $K$. Dirichlet's unit theorem implies that we have a decomposition of $\textbf{Q}[G]$-modules
	$$\textbf{Q}\otimes_\bZ \cO_K[1/S]^\times = \left(\textbf{Q}\otimes_{\Z} \cO_k[1/S]^\times\right) \bigoplus\left( \bigoplus_{\mathds{1}\neq\rho} W^{\oplus d^+_{S}(\rho)}\right),$$
	where $(W,\rho)$ runs through the set of all non-trivial irreducible representations of $G$ and $d^+_{S}(\rho)= \sum_{v\in S}\dim \HH^0(k_v,W)$. It will be convenient to introduce the following invariants: 
	\begin{equation}\label{eq:def_invariants_d+_et_f}
		d(\rho):=[k:\Q]\cdot \dim \rho, \qquad	d^+(\rho):= \sum_{v|\infty} \dim \HH^0(k_v,W), \qquad f(\rho):=\sum_{\gp|p} \dim \HH^0(k_\gp,W),
	\end{equation}
	so that $d^+(\rho)=d^+_{S_\infty(k)}(\rho)$ and $f(\rho)=d^+_{S_\infty(k)\bigcup S_p(k)}(\rho)-d^+_{S_\infty(k)}(\rho)$. 
	
	We record in the next lemma a list of useful properties satisfied by the invariants introduced in (\ref{eq:def_invariants_d+_et_f}) and which we make use of in Sections \ref{sec:bounds} and \ref{sec:vanishing_locus}. Recall that a rule $\rho \mapsto a(\rho)\in \Z$, where $\rho$ runs among all the representations of Galois groups of finite extensions of number fields, is said to be compatible with Artin formalism if, for all finite Galois extensions $M/L/E$:
	\begin{enumerate}
		\item[(a)] $a(\tilde{\rho})=a({\rho})$ if $\tilde{\rho}\in \Art_{\ob{\bQ}_p}(\Gal(M/E))$ is the inflation of $\rho\in \Art_{\ob{\bQ}_p}(\Gal(L/E))$,
		\item[(b)] $a(\rho_1 \oplus \rho_2)=a(\rho_1)+a(\rho_2)$ for all $\rho_1,\rho_2 \in \Art_{\ob{\bQ}_p}(\Gal(M/E))$, and
		\item[(c)] $a(\rho')=a(\rho)$ if $\rho'$ is the induction of $\rho$ from $\Gal(M/L)$ to $\Gal(M/E)$.
	\end{enumerate}
	
	\begin{lemme}\label{lem:properties_invariants_d_d+_and_f}
		Let $L/E$ be a finite extension of number fields and let $a\in \{d,d^+,f\}$.
		\begin{enumerate}
			\item The rule $\rho \mapsto a(\rho)$ is compatible with Artin formalism.
			\item We have $d(\mathds{1}_L)=[L:\Q]$, $d^+(\mathds{1}_L)=|S_\infty(L)|$ and $f(\mathds{1}_L)=|S_p(L)|$. For any representation $(W,\rho)\in \Art_{\ob{\bQ}_p}(\Gal(M/L))$ with $\mathds{1}_L \not\subset \rho$, we have $\dim \Hom_{\Gal(M/L)}(W,\cO_M^{\times,\wedge})=d^+(\rho)$ and  $\dim \Hom_{\Gal(M/L)}(W,\cO_M[\tfrac{1}{p}]^{\times,\wedge})=d^+(\rho)+f(\rho)$.
			\item Let $\rho\in \Art_{\ob{\bQ}_p}(G_L)$ and let $M=\ob{\Q}^{\ker\rho}$ be the field extension cut out by $\rho$. Then $d^+(\rho)=d(\rho)$ if and only if $M$ is totally real. If $d^+(\rho)=0$, then $L$ is totally real and $M$ is a CM field. If $M$ has at least one real place, then $d^+(\rho) \geq \dim \rho$ and any subrepresentation $\theta$ of $\Ind_L^E \rho$ satisfies $d^+(\theta)\geq 1$. If $L$ has $r$ complex places, then $d^+(\rho)\geq r\cdot \dim \rho$.
			\item Let $\rho\in \Art_{\ob{\bQ}_p}(G_L)$. Then we have $a(\rho)\leq (\dim \rho)\cdot a(\mathds{1}_L)$. 
			\item Let $M/L$ be a finite Galois extension, let $\theta\in \Art_{\ob{\bQ}_p}(\Gal(M/L))$ be irreducible and let $\chi$ be a multiplicative character of $G_L$. Then we have $(\dim\theta)\cdot a(\theta \otimes \chi) \leq a(\chi_{|G_M})\leq a(\mathds{1}_M)$. Moreover, if $\theta\neq \mathds{1}_L$, then we have $(\dim\theta)\cdot a(\theta \otimes \chi) \leq a(\chi_{|G_M})-a(\chi)\leq a(\mathds{1}_M)-a(\chi)$.
		\end{enumerate}
	\end{lemme}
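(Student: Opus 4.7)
\emph{Parts (1) and (2).} I would go through the five items in order, relying on Shapiro's lemma, Frobenius reciprocity, Dirichlet's $S$-unit theorem, and a pointwise analysis of complex conjugations. Inflation invariance~(1a) and additivity on direct sums~(1b) are formal, since all three invariants depend only on the semisimple isomorphism class of $\rho$ and on dimensions of $G$-fixed subspaces. For (1c) I would apply Shapiro's lemma
\[
H^0\bigl(E_v,\Ind_L^E W\bigr)\cong\bigoplus_{w|v}H^0(L_w,W)
\]
at each archimedean (resp.\ $p$-adic) place $v$ of $E$, which rearranges the defining sum for $d^+$ (resp.\ $f$); the identity $[E:\Q][L:E]=[L:\Q]$ together with $\dim\Ind_L^E W=[L:E]\dim W$ handles $d$. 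The concrete values in~(2) at $\rho=\mathds{1}_L$ are immediate. For the $\Hom$-computations I would invoke the $\ob{\Q}_p[\Gal(M/L)]$-decomposition of $\cO_M[\tfrac{1}{p}]^{\times,\wedge}$ (and of $\cO_M^{\times,\wedge}$) furnished by Dirichlet's $S$-unit theorem, recalled at the start of Section~\ref{sec:isotypic}: by Schur's lemma, the multiplicity of $W$ in the decomposition is $d^+_S(\rho)$, which equals $d^+(\rho)$ for $S=S_\infty(L)$ and $d^+(\rho)+f(\rho)$ for $S=S_\infty(L)\cup S_p(L)$; the hypothesis $\mathds{1}_L\not\subset\rho$ is exactly what kills the distinguished $\cO_L[\tfrac{1}{p}]^\times$-summand.

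\emph{Part (3), characterizations and routine bounds.} Each place $v|\infty$ of $L$ contributes $\dim W^{G_{L_v}}$ to $d^+(\rho)$ and $[L_v:\bR]\dim W$ to $d(\rho)$; since $\rho(c_v)$ is an involution in characteristic zero, this contribution equals $\dim W$ precisely when either $v$ is complex or $\rho(c_v)=\id$, and equals $0$ precisely when $v$ is real with $\rho(c_v)=-\id$. Comparing entry by entry, $d^+=d$ forces $L$ totally real with $\rho(c_v)=\id$ for every $v$, i.e.\ $M$ totally real; and $d^+=0$ forces $L$ totally real with $\rho(c_v)=-\id$ for every $v$, so by faithfulness of $\rho$ on $\Gal(M/L)$ all complex conjugations collapse to a single involution $c\in\Gal(M/L)$, making $M^{\langle c\rangle}$ totally real of index $2$ in $M$, i.e.\ $M$ is CM. If $M$ has a real place $w$ lying over $v_0$, the decomposition group $D_w\subset\Gal(M/L)$ is trivial, so $c_{v_0}\in\ker\rho$ and this single place already contributes $\dim\rho$ to $d^+(\rho)$; similarly each complex place of $L$ contributes $\dim\rho$, yielding $d^+(\rho)\geq r\dim\rho$.

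\emph{Part (3), the subrepresentation statement.} This is the main obstacle. Keep $v_0$ and $c_{v_0}$ as above, and let $u=v_0|_E$, which is a real place of $E$; compatible choices of embeddings identify $c_{v_0}$ with the complex conjugation $c_u\in G_E$. Given $\theta\subset\Ind_L^E\rho$, Frobenius reciprocity applied to the inclusion supplies a nonzero $G_L$-equivariant map $\phi\colon\theta|_{G_L}\to\rho$. Since $\rho(c_{v_0})=\id$, the image $\phi(\theta)\subset\rho$ is pointwise fixed by $c_{v_0}$; decomposing $\theta=\theta^+\oplus\theta^-$ into $(\pm1)$-eigenspaces of the involution $c_{v_0}$, the component $\theta^-$ lies in $\ker\phi$, so $\theta^+$ surjects onto the nonzero $\phi(\theta)$. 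Hence $\theta^{c_u}=\theta^+\neq 0$ and $d^+(\theta)\geq 1$. This Frobenius-reciprocity argument is the one non-bookkeeping ingredient in the whole proof.

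\emph{Parts (4) and (5).} Part~(4) is the pointwise bound $\dim W^{G_{L_v}}\leq\dim W$ summed over the relevant set of places. For (5), the projection formula and the decomposition of the regular representation give
\[
\Ind_M^L(\chi|_{G_M})=\chi\otimes\Ind_M^L\mathds{1}_M=\bigoplus_\theta(\theta\otimes\chi)^{\oplus\dim\theta},
\]
where $\theta$ runs over the irreducible representations of $\Gal(M/L)$. Applying induction-invariance and additivity from~(1) yields $a(\chi|_{G_M})=\sum_\theta(\dim\theta)\,a(\theta\otimes\chi)$. Keeping only the distinguished summand gives $(\dim\theta)\,a(\theta\otimes\chi)\leq a(\chi|_{G_M})$, while Part~(4) applied to the $1$-dimensional representation $\chi|_{G_M}$ gives $a(\chi|_{G_M})\leq a(\mathds{1}_M)$. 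Isolating and subtracting the $\theta=\mathds{1}_L$ summand, which equals $a(\chi)$, yields the refinement $(\dim\theta)\,a(\theta\otimes\chi)\leq a(\chi|_{G_M})-a(\chi)\leq a(\mathds{1}_M)-a(\chi)$ when $\theta\neq\mathds{1}_L$.
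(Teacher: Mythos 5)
Your proposal is correct and follows essentially the same route as the paper: parts (1), (2), (4) from the definitions, Shapiro/Frobenius reciprocity and Dirichlet's unit theorem; part (3) by analyzing complex conjugations, with the subrepresentation claim obtained from the nonzero $G_L$-map $\theta|_{G_L}\to\rho$ furnished by Frobenius reciprocity (the paper phrases this via a common irreducible constituent $\rho'\subset\rho$, you via the $\pm1$-eigenspace decomposition and $\ker\phi$ — the same idea); and part (5) by decomposing $\Ind_M^L\chi|_{G_M}$ and using non-negativity of $a$, exactly as in the paper.
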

	\begin{proof}
		It is easy to deduce the claims (1), (2) and (4) from the definitions and Dirichlet's unit theorem. Let us justify claim (3). It is clear that $d^+(\rho)\leq d(\rho)$, and from \eqref{eq:def_invariants_d+_et_f} we have $d^+(\rho)=d(\rho)$ (resp. $d^+(\rho)=0$) if and only if all archimedean places of $L$ are real and $\rho(\sigma)=\id$ (resp. $\rho(\sigma)=-\id$) for all complex conjugations $\sigma \in \Gal(M/L)$. Since $\rho$ is faithful on $\Gal(M/L)$, this is equivalent to $M$ being totally real (resp. this implies $M$ being CM and $L$ totally real). 
		
		Assume now that $M$ has at least one real place $w$ and let $v$ (resp. $v_0$) be the place of $L$ (resp. of $E$) lying below $w$. Then we clearly have $d^+(\rho)\geq \dim\HH^0(L_v,\rho)=\dim\rho$. Moreover, if $\theta \subset \Ind_L^E \rho$, then the Frobenius reciprocity implies that there exists a subrepresentation $\rho' \subset \rho$ such that $\rho' \subset \theta_{|G_L}$, yielding $d^+(\theta)\geq \dim \HH^0(E_{v_0},\theta)=\dim \HH^0(L_v,\theta)\geq \HH^0(L_v,\rho')=\dim \rho'\geq 1$ as claimed. Suppose finally that $L$ has $r$ complex places $v_1,\ldots,v_r$. Then $d^+(\rho)\geq \sum_{i=1}^{r} \dim \HH^0(L_{v_i},\rho)=r\cdot \dim \rho$, so this ends the proof of claim (3).
		
		We now prove claim (5). The upper bounds on $a(\chi_{|G_M})$ directly follow from claim (4), so we only prove the lower bounds. Since $\theta$ is irreducible, the representation $(\theta \otimes \chi)^{\oplus\dim \theta}$ (and even $(\theta \otimes \chi)^{\oplus\dim \theta} \oplus \chi$ if $\theta\neq\mathds{1}_L$)  occurs as a subrepresentation of $(\Ind_M^L \mathds{1}_M)\otimes \chi= \Ind_M^L \chi_{|G_M}$. Artin formalism then yields the lower bounds of claim (5), as $a(\Ind_M^L \chi_{|G_M})=a(\chi_{|G_M})$ and $a$ takes non-negative values.
	\end{proof}
	
	We now describe the isotypic components of the map $\sL_{K_\infty/K}$. For $g\in G$, $\gP\in S_p(K)$ and $\eta$ a place of $K_\infty$ above $\gP$, the map $K_\gP \to  K_{g(\gP)}$ (resp. $K_{\infty,\eta} \to K_{\infty,\tilde{g}(\eta)})$ induced by $g$ (resp. by a lift  $\tilde{g}\in \Gal(K_\infty/k)$ of $g$) is a field isomorphism which yields a left $G$-action $x \mapsto g(x)$ (resp. $\gamma \mapsto \tilde{g}\gamma \tilde{g}^{-1}$) on $\bigoplus_{\gP|p} K_{\gP}^\times$ and on $\bigoplus_{\gP|p} \Gamma_\gP$ respectively. This action also restricts to $\cH_{K_\infty/K}$, and $G$ acts trivially on the quotient $(\bigoplus_{\gP|p} \Gamma_\gP)/\cH_{K_\infty/K}$. Moreover, the map $\sL_{K_\infty/K}$ is $G$-equivariant for the natural $G$-action on $\cO_K[\tfrac{1}{p}]^{\times}$ and the action on $\cH_{K_\infty/K}$ described above.
	
	Fix any $(W,\rho)\in \Art_{\ob{\Q}_p}(G)$ and let $\Hom_G(X,Y)$ be the $\ob{\Q}_p$-vector space of all $G$-equivariant linear maps between two $\ob{\Q}_p[G]$-modules $X$ and $Y$. By definition, the $\rho$-isotypic component of a $G$-equivariant $\ob{\Q}_p$-linear map $f:X \to Y$ is the linear map $\Hom_G(W,X) \to \Hom_G(W,Y)$ obtained by post-composing by $f$. Write $\sL_{k_\infty/k}(\rho)$ for the $\rho$-isotypic component of $\sL_{K_{\cyc}/K}$ and define
	\[\delta_{k_\infty/k}^\textbf{G}(\rho) := \dim \coker\left( \sL_{k_\infty/k}(\rho)\right).\]
	If $k_\infty=k_{\cyc}$, we abbreviate $\sL_{k_\infty/k}(\rho)$ and $\delta_{k_\infty/k}^\textbf{G}(\rho)$ as $\sL_k(\rho)$ and $\delta^\textbf{G}_k(\rho)$ respectively.
	
	For all $\gp\in S_p(k)$, fix a place $\gP_0$ of $K$ above $\gp$, let $G_\gp$ be the decomposition subgroup of $G$ at $\gP_0$, let $W_{\gp}^0=W^{G_\gp}$ and denote by $\res_\gp$ the restriction-to-$W_\gp^0$ map.
	
	\begin{proposition} \label{prop:identification_sL_rho_composante}
		The map $\sL_{k_\infty/k}(\mathds{1})$ can be naturally identified with $\sL_{k_\infty/k}$. If $\mathds{1}\not \subset \rho$, then the map $\sL_{k_\infty/k}(\rho)$ can be naturally identified with the composite map 
		{\small 
			\begin{equation}\label{eq:definition_sL_rho_Gamma}
				\begin{tikzcd}
					\Hom_G(W,\cO_K[\tfrac{1}{p}]^{\times,\wedge})  \ar[r,"\oplus \loc_\gp"] &  \bigoplus_{\gp} \Hom_{G_\gp}(W, K_{\gP_0}^{\times,\wedge})  \ar[r,"\oplus \res_\gp"] & \bigoplus_{\gp} \Hom(W_{\gp}^0, k_\gp^{\times,\wedge}) \ar[r,"\oplus \rec_{\Gamma_\gp}"] & \bigoplus_{\gp} \Hom(W_{\gp}^0,\Gamma_\gp^\wedge).
				\end{tikzcd}
			\end{equation}\normalsize}
		Here, $\gp$ runs over $S_p(k)$ in each sum, and we implicitly used the fact that $\left(K_{\gP_0}^{\times,\wedge}\right)^{G_\gp}=k_{\gp}^{\times,\wedge}$.
	\end{proposition}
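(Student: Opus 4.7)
The strategy is to unravel the $\rho$-isotypic component of the $G$-equivariant map $\sL_{K_\infty/K}$ by factoring the latter as the localization $\cO_K[\tfrac{1}{p}]^{\times,\wedge} \to \bigoplus_{\gP|p} K_\gP^{\times,\wedge}$ followed by the local reciprocity maps $\rec_{\Gamma_\gP}$ (whose image lies in $\cH_{K_\infty/K}^\wedge$ by the product formula), and then to apply $\Hom_G(W,-)$ step by step using Frobenius reciprocity and invariants-versus-coinvariants identifications available in characteristic zero.

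For $\rho=\mathds{1}$, the functor $\Hom_G(\mathds{1},-)=(-)^G$ is exact on $\ob{\bQ}_p[G]$-modules since $|G|$ is invertible. Galois descent for the $p$-unit groups gives $(\cO_K[\tfrac{1}{p}]^{\times,\wedge})^G=\cO_k[\tfrac{1}{p}]^{\times,\wedge}$, while $(\cH_{K_\infty/K}^\wedge)^G$ identifies with $\cH_{k_\infty/k}^\wedge$ using that $G$ permutes the primes above each $\gp\in S_p(k)$ transitively and acts trivially on the abelian group $\Gamma$. Direct inspection of the defining formulas then shows that the restriction of $\sL_{K_\infty/K}$ to $G$-invariants is $\sL_{k_\infty/k}$.

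Now suppose $\mathds{1}\not\subset\rho$, so $W^G=0$. On the target side, the defining sequence $0\to \cH_{K_\infty/K} \to \bigoplus_{\gP|p}\Gamma_\gP \to \Gamma$ and the hypothesis $W^G=0$ combine to give $\Hom_G(W,\cH_{K_\infty/K}^\wedge)\simeq \Hom_G(W,\bigoplus_\gP \Gamma_\gP^\wedge)$. For each $\gp\in S_p(k)$, the $G$-set $\{\gP\mid\gp\}$ is transitive with stabilizer $G_\gp$, and the action of $G$ on each $\Gamma_\gP$ is trivial since $\Gamma$ is abelian; hence $\bigoplus_{\gP|\gp}\Gamma_\gP^\wedge \simeq \Ind_{G_\gp}^G\Gamma_\gp^\wedge$, and Frobenius reciprocity combined with the triviality of the $G_\gp$-action on $\Gamma_\gp^\wedge$ yields the chain $\Hom_G(W,\bigoplus_{\gP|\gp}\Gamma_\gP^\wedge)\simeq \Hom_{G_\gp}(W,\Gamma_\gp^\wedge) \simeq \Hom(W_\gp^0,\Gamma_\gp^\wedge)$, via the canonical identification $W^{G_\gp}\simeq W_{G_\gp}$ in characteristic zero. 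The same chain applies to the source: Frobenius reciprocity identifies $\Hom_G(W,\bigoplus_\gP K_\gP^{\times,\wedge})$ with $\bigoplus_\gp \Hom_{G_\gp}(W,K_{\gP_0}^{\times,\wedge})$, and the subsequent restriction-to-$W_\gp^0$ step lands in $(K_{\gP_0}^{\times,\wedge})^{G_\gp}=k_\gp^{\times,\wedge}$ and corresponds to the middle arrow of (\ref{eq:definition_sL_rho_Gamma}); unlike its analogue on the target side, this arrow need not be an isomorphism but merely a step of the composite.

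It then remains to verify that, under these natural identifications, $\sL_{k_\infty/k}(\rho)$ equals the displayed composite: tracing $f\in \Hom_G(W,\cO_K[\tfrac{1}{p}]^{\times,\wedge})$ and $w\in W_\gp^0$ through $\sL_{K_\infty/K}\circ f$ produces a $\gP_0$-component $\rec_{\Gamma_{\gP_0}}(\loc_{\gP_0}(f(w)))$ with $\loc_{\gP_0}(f(w)) \in k_\gp^{\times,\wedge}$, which matches the third arrow $\rec_{\Gamma_\gp}$ once one identifies $\Gamma_{\gP_0}=\Gamma_\gp$ inside $\Gamma$ (this equality reflecting, once more, that $\Gamma$ is abelian). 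The main delicacy in the argument is coherence: one must check that Frobenius reciprocity, restriction to $G_\gp$-invariants, and the passage from $\rec_{\Gamma_{\gP_0}}$ on $K_{\gP_0}^\times$ to $\rec_{\Gamma_\gp}$ on $k_\gp^\times$ commute with one another, so that the three arrows of (\ref{eq:definition_sL_rho_Gamma}) fit together into a genuine factorisation of $\sL_{k_\infty/k}(\rho)$.
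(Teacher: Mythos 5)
Your strategy is the same as the paper's: factor $\sL_{K_\infty/K}$ through $\bigoplus_{\gP|p}K_\gP^{\times,\wedge}$, apply $\Hom_G(W,-)$ using Frobenius reciprocity and the triviality of the $G_\gp$-action on the local Galois groups, use $\mathds{1}\not\subset\rho$ to replace $\cH_{K_\infty/K}^\wedge$ by $\bigoplus_{\gP}\Gamma_\gP^\wedge$ (this is the paper's statement that the isotypic component $j(\rho)$ of the inclusion $\cH_{K_\infty/K}\hookrightarrow\bigoplus_\gP\Gamma_\gP$ is an isomorphism), and treat $\rho=\mathds{1}$ by restriction to $G$-invariants.

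The one step that does not hold as you justify it is the final comparison of reciprocity maps. Here $\Gamma_{\gP_0}$ is the decomposition group of $\Gal(K_\infty/K)$ at $\gP_0$, whereas $\Gamma_\gp$ is the decomposition group of $\Gal(k_\infty/k)$ at $\gp$; restriction maps the former into the latter, in general with nontrivial finite index, so there is no literal equality $\Gamma_{\gP_0}=\Gamma_\gp$ ``inside $\Gamma$'', and the abelianness of $\Gamma$ is not the relevant input. More importantly, even on elements $x\in k_\gp^{\times}=(K_{\gP_0}^{\times})^{G_\gp}$ the two maps do not agree on the nose: by the compatibility of the local Artin map with the norm $N_{K_{\gP_0}/k_\gp}$, which on $k_\gp^\times$ is raising to the power $n_\gp=[K_{\gP_0}:k_\gp]$, the maps $\rec_{\Gamma_{\gP_0}}$ and $\rec_{\Gamma_\gp}$ on $k_\gp^{\times,\wedge}$ differ by this factor $n_\gp$. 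This is precisely why the paper inserts the multiplication-by-$n_\gp$ map $[n_\gp]\colon\Gamma_{\gP_0}^{\wedge}\to\Gamma_{\gp}^{\wedge}$, an isomorphism after $\ob{\Q}_p$-completion; the identification asserted in the proposition is natural only up to these harmless scalars (which of course leave $\delta^\textbf{G}_{k_\infty/k}(\rho)$ unchanged). So your verification at the last arrow should invoke this norm-functoriality of local class field theory rather than the abelianness of $\Gamma$; with that correction, your argument coincides with the paper's proof.
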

	\begin{proof}
		Let $j \colon \cH_{K_\infty/K} \hookrightarrow \bigoplus_{\gP|p} \Gamma_\gP$ be the inclusion map and let $j(\rho)$ be its $\rho$-isotypic component. 
		
		Given a prime $\gp\in S_p(k)$ and a fixed prime $\gP_0|\gp$ of $K$ as before, we have $\bigoplus_{\gP|\gp} K_{\gP}^\times = \Ind_{G_\gp}^G K_{\gP_0}^\times$ and $\bigoplus_{\gP|\gp} \Gamma_\gP = \Ind_{G_\gp}^G \Gamma_{\gP_0}$ as $G$-modules and Frobenius reciprocity shows that $\Hom_G(W,\bigoplus_{\gP|\gp} K_\gP^{\times,\wedge})\simeq\Hom_{G_\gp}(W,K_{\gP_0}^{\times,\wedge})$ and $\Hom_G(W,\bigoplus_{\gP|\gp} \Gamma_\gP^{\times,\wedge})\simeq\Hom_{G_\gp}(W,\Gamma_{\gP_0}^{\times,\wedge})$, the isomorphisms being the natural projection maps. Therefore, $j(\rho)\circ \sL_{k_\infty/k}(\rho)$ can be identified with the composite map 
		\small$$\begin{tikzcd}
			\Hom_G(W,\cO_K[\tfrac{1}{p}]^{\times,\wedge})  \ar[r,"\oplus \loc_{\gp}"] & \bigoplus_{\gp|p} \Hom_{G_\gp}(W, K_{\gP_0}^{\times,\wedge}) \ar[r,"\oplus \rec_{\Gamma_{\gP_0}}"] & \bigoplus_{\gp|p} \Hom_{G_\gp}(W,\Gamma_{\gP_0}^\wedge)=\bigoplus_{\gp|p} \Hom(W_{\gp}^0,\Gamma_{\gP_0}^\wedge),
		\end{tikzcd}$$\normalsize
		where the last identification is induced by $\oplus \res_\gp$. Note that $\res_\gp$ and $\rec_{\Gamma_{\gP_0}}$ commute. Furthermore, letting $[n_\gp]:\Gamma_{\gP_0}^{\wedge} \rightarrow \Gamma_{\gp}^{\wedge}$ be the multiplication by $n_\gp=[K_{\gP_0}:k_\gp]$ map, the functoriality of Artin's reciprocity law shows that $\rec_{\gp}$ coincides with $[n_\gp]\circ \rec_{\gP_0}$ on $k_{\gp}^{\times,\wedge}$. Hence, if $\mathds{1} \not\subseteq \rho$, then the map $j(\rho) \circ\sL_{k_\infty/k}(\rho)$ coincides with the map of (\ref{eq:definition_sL_rho_Gamma}) under the identification $[n_\gp]:\Gamma_{\gP_0}^{\wedge} \simeq \Gamma_{\gp}^{\wedge}$. Since $j(\rho)$ is an isomorphism in this case, we obtain the desired description of $\sL_{k_\infty/k}(\rho)$. In the case where $\rho=\mathds{1}$, the map $\sL_{K_\infty/K}(\mathds{1})$ is nothing but the restriction  $(\sL_{K_\infty/K})^G \colon \cO_k[\tfrac{1}{p}]^{\times,\wedge} \to (\cH_{K_\infty/K}^\wedge)^G$ of $\sL_{K_\infty/K}$ to the $G$-invariants. Under the identifications
		$$(\bigoplus_{\gP|p} \Gamma_\gP^\wedge)^G = (\bigoplus_{\gp|p}\Ind_{G_\gp}^G \Gamma_{\gP_0}^\wedge )^G = \bigoplus_{\gp|p} \Gamma_{\gP_0}^\wedge \simeq \bigoplus_{\gp|p} \Gamma_\gp^\wedge$$ 
		induced by by Frobenius reciprocity and by $\oplus_\gp [n_\gp]$, it is clear that $(\cH_{K_\infty/K}^\wedge)^G$ is mapped onto $\cH_{k_\infty/k}^\wedge$, and that $\sL_{K_{\cyc}/K}(\mathds{1})$ can be  naturally identified with $\sL_{k_\infty/k}$.
		
	\end{proof}
	\begin{remark}
		The map $\sL_{k_\infty/k}(\rho)$ admits a more intrinsic description in terms of Bloch-Kato Selmer groups. Namely, let $\HH^1_{\ff,p}(k,\widecheck{W})\subset \HH^1(k,\widecheck{W})$ be the Selmer group of $\widecheck{W}$ defined by the Bloch-Kato condition at all places not dividing $p$ (see \cite[\S~3]{blochkato}), and let $\HH^1_\Gamma(k_\gp,\widecheck{W}^0_\gp)$ be the orthogonal complement of $\HH^1(\Gamma_\gp,W^0_\gp)\subset \HH^1(k_\gp,W)$ under Tate's local pairing. Then Kummer theory and the Inflation-Restriction exact sequence provide natural isomorphisms $\HH^1_{\ff,p}(k,\widecheck{W})\simeq \Hom_{G_k}(W,\cO_K[\tfrac{1}{p}]^{\times,\wedge})$ and $\HH^1(k_\gp,\widecheck{W})/\HH^1_\Gamma(k_\gp,\widecheck{W}^0_\gp)\simeq \Hom(W_\gp^0,\Gamma_\gp)$. An easy adaptation of the proof of Proposition \ref{prop:generalisation_jaulent} then shows that $\sL_{k_\infty/k}(\rho)$ coincides with the localization map $\HH^1_{f,p}(k,\widecheck{W}) \rightarrow \bigoplus_{\gp|p} \HH^1(k_\gp,\widecheck{W})/\HH^1_\Gamma(k_\gp,\widecheck{W}^0_\gp)$ under these identifications.
	\end{remark}
	\begin{corollaire}\label{coro:formule_gross_defect_over_Q}
		\begin{enumerate}
			\item 
			If $\rho=\mathds{1}$ then we have $\delta^\textbf{G}_{k_\infty/k}(\mathds{1})=\delta^\textbf{G}_{k_\infty/k}$. 
			\item Assume $k=\bQ$, $k_\infty=\bQ_{\cyc}$ and $\mathds{1}\not \subset \rho$. Fix an embedding $\iota_p \colon \ob{\Q} \hookrightarrow \ob{\Q}_p$ and let $W_p^0=W^{G_{\Qp}}$. Then,
			\begin{equation*}
				\delta^\textbf{G}_\Q(\rho)=\dim \coker\left[\Hom_G(W,\cO_K[\tfrac{1}{p}]^{\times,\wedge})  \longrightarrow \Hom(W_p^0,\ob{\Q}_p) \right],
			\end{equation*}
			the map being the restriction-to-$W_p^0$ map followed by the post-composition by $\log_p \circ \iota_p$.
		\end{enumerate} 
	\end{corollaire}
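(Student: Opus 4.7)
The plan is to deduce both parts of the corollary directly from Propositions \ref{prop:generalisation_jaulent} and \ref{prop:identification_sL_rho_composante}.

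For part (1), Proposition \ref{prop:identification_sL_rho_composante} identifies $\sL_{k_\infty/k}(\mathds{1})$ naturally with the map $\sL_{k_\infty/k}$ introduced in \eqref{eq:def_sL_Gamma}. Combined with Proposition \ref{prop:generalisation_jaulent}, this yields
\[
\delta^\textbf{G}_{k_\infty/k}(\mathds{1}) \;=\; \dim \coker \sL_{k_\infty/k}(\mathds{1}) \;=\; \dim \coker \sL_{k_\infty/k} \;=\; \delta^\textbf{G}_{k_\infty/k}.
\]

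For part (2), I would specialize Proposition \ref{prop:identification_sL_rho_composante} to $k=\Q$ and $k_\infty=\Q_\cyc$. Three simplifications occur: the direct sum over $\gp \mid p$ collapses to the unique prime $(p)$; the decomposition group $\Gamma_p$ equals all of $\Gamma$ (since $p$ is totally ramified in $\Q_\cyc$); and $k_\gp=\Qp$. This realizes $\sL_\Q(\rho)$ as the composite
\[
\Hom_G(W,\cO_K[\tfrac{1}{p}]^{\times,\wedge}) \xrightarrow{\loc_p} \Hom_{G_p}(W,K_{\gP_0}^{\times,\wedge}) \xrightarrow{\res_p} \Hom(W_p^0,\Qp^{\times,\wedge}) \xrightarrow{\rec_\Gamma} \Hom(W_p^0,\Gamma^\wedge).
\]

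Next I would use the isomorphism $\log_p\circ\chi_\cyc\colon \Gamma^\wedge \xrightarrow{\sim} \ob{\Q}_p$ to rewrite the target as $\Hom(W_p^0,\ob{\Q}_p)$. As already noted in the discussion preceding Proposition \ref{prop:generalisation_jaulent}, under this identification $\rec_\Gamma$ becomes $-\log_p$ on $\Qp^{\times,\wedge}$ (the local norm $\cN_p$ being trivial when $k_\gp=\Qp$). Unwinding, the composite then sends $\phi$ to $(-\log_p\circ\iota_p)\circ \phi|_{W_p^0}$: indeed, the embedding $K\hookrightarrow K_{\gP_0}\subset \ob{\Q}_p$ underlying $\loc_p$ is precisely the restriction of $\iota_p$ to $K$. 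Since the overall sign is irrelevant for the cokernel, Proposition \ref{prop:generalisation_jaulent} delivers the claimed formula.

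The proof is essentially bookkeeping on top of Propositions \ref{prop:generalisation_jaulent} and \ref{prop:identification_sL_rho_composante}, together with the elementary identification $\Gamma^\wedge \cong \ob{\Q}_p$; no genuine obstacle arises, beyond making sure that each functorial identification (particularly $(K_{\gP_0}^{\times,\wedge})^{G_p}=\Qp^{\times,\wedge}$ and $\log_p\circ\loc_p = \log_p\circ\iota_p$ on $K^\times$) is compatible with the isotypic decomposition.
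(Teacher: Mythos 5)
Your argument is correct and follows essentially the same route as the paper: part (1) is exactly Propositions \ref{prop:generalisation_jaulent} and \ref{prop:identification_sL_rho_composante}, and part (2) is the specialization of Proposition \ref{prop:identification_sL_rho_composante} to $k=\Q$, $k_\infty=\Q_\cyc$ together with the identification $\log_p\circ\chi_\cyc\circ\rec_{\Gamma_p}=-\log_p$ on $\Q_p^{\times,\wedge}$, the sign being irrelevant for the cokernel. The only cosmetic point is that in part (2) you do not actually need Proposition \ref{prop:generalisation_jaulent} at the last step, since $\delta^\textbf{G}_\Q(\rho)$ is \emph{defined} as $\dim\coker\,\sL_\Q(\rho)$ for $\mathds{1}\not\subset\rho$.
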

	\begin{proof}
		The first claim directly follows from Propositions \ref{prop:generalisation_jaulent} and \ref{prop:identification_sL_rho_composante}. The second claim follows from from Proposition \ref{prop:identification_sL_rho_composante} and from the fact that, if $\Gamma_p=\Gal(\Q_{p,\cyc}/\Qp)$, then the composite map $\log_p \circ \chi_{\cyc} \circ \rec_{\Gamma_p} \colon \Q_p^{\times,\wedge} \to \Gamma_p^\wedge \simeq \ob{\Q}_p \otimes_{\Zp} (1+p\Zp) \simeq \ob{\Q}_p$ coincides with $-\log_p$.
	\end{proof}
	
	\begin{corollaire}\label{coro:artin_formalism}
		The assignment $\rho \mapsto \delta^\textbf{G}_{k_\infty/k}(\rho)$ is compatible with Artin formalism. More precisely, 
		\begin{enumerate}
			\item[(a)] $\delta^\textbf{G}_{k_\infty/k}(\rho)$ does not depend on the choice of the splitting field $K$ of $\rho\in \Art_{\ob{\bQ}_p}(G_k)$.
			\item[(b)] $\delta^\textbf{G}_{k_\infty/k}(\rho_1\oplus \rho_2)=\delta^\textbf{G}_{k_\infty/k}(\rho_1)+\delta^\textbf{G}_{k_\infty/k}(\rho_2)$ for any $\rho_1,\rho_2\in \Art_{\ob{\bQ}_p}(G_k)$.
			\item[(c)] If $k_\infty=kk'_\infty$ for some subfield $k'\subset k$ and $\Zp$-extension $k'_\infty/k'$ and if $\rho'=\Ind_k^{k'}\rho$ is induced from $\rho\in \Art_{\ob{\bQ}_p}(G_k)$, then $\delta^\textbf{G}_{k'_\infty/k'}(\rho')=\delta^\textbf{G}_{k_\infty/k}(\rho)$.
		\end{enumerate}
	\end{corollaire}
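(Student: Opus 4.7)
My plan is to work at the level of the explicit description of $\sL_{k_\infty/k}(\rho)$ given by Proposition \ref{prop:identification_sL_rho_composante}, which expresses this regulator as a composite of localization and reciprocity maps between explicit $\Hom$-spaces. Since $\delta^\textbf{G}_{k_\infty/k}(\rho)$ is by definition the dimension of the cokernel, it suffices in each case to produce canonical isomorphisms of source and target under which the two regulator maps agree. The case $\rho=\mathds{1}$ is handled via Corollary \ref{coro:formule_gross_defect_over_Q}(1), so we focus throughout on $\mathds{1}\not\subset\rho$.

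For (a), let $K\subseteq K'$ be two Galois splitting fields of $\rho$ over $k$. Galois descent in characteristic zero yields $(\cO_{K'}[\tfrac{1}{p}]^{\times,\wedge})^{\Gal(K'/K)}=\cO_K[\tfrac{1}{p}]^{\times,\wedge}$ and similar identifications for each local component appearing in (\ref{eq:definition_sL_rho_Gamma}). Since $W$ is inflated from $\Gal(K/k)$, the equality $\Hom_{\Gal(K'/k)}(W,X)=\Hom_{\Gal(K/k)}(W,X^{\Gal(K'/K)})$ identifies both source and target with those computed from $K$, while naturality of the localization and reciprocity maps ensures the regulator maps coincide. For (b), fixing a common splitting field for $\rho_1$ and $\rho_2$ reduces the claim to the obvious additivity $\Hom_G(W_1\oplus W_2,-)=\Hom_G(W_1,-)\oplus \Hom_G(W_2,-)$, and cokernels add.

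For (c), let $K$ be a Galois splitting field of $\rho$ over $k$ and let $K'$ be a Galois closure of $K$ over $k'$, so that $W':=\Ind_{\Gal(K'/k)}^{\Gal(K'/k')}W$ realizes $\rho'$. Frobenius reciprocity identifies the source $\Hom_{\Gal(K'/k')}(W', \cO_{K'}[\tfrac{1}{p}]^{\times,\wedge})$ with $\Hom_{\Gal(K'/k)}(W,\cO_{K'}[\tfrac{1}{p}]^{\times,\wedge})$, and (a) identifies this with the source of $\sL_{k_\infty/k}(\rho)$. For the target, Mackey's decomposition $W'|_{\Gal(K'/k')_{\gp'}}\simeq\bigoplus_{\gp|\gp'}\Ind_{\Gal(K'/k)_\gp}^{\Gal(K'/k')_{\gp'}} W|_{\Gal(K'/k)_\gp}$ combined with Frobenius reciprocity yields $(W')^0_{\gp'}\simeq\bigoplus_{\gp|\gp'}W^0_\gp$; then local class field theory functoriality (namely $\rec_{\Gamma'_{\gp'}}|_{(k'_{\gp'})^\times}=\rec_{\Gamma_\gp}\circ N_{k_\gp/k'_{\gp'}}$ composed with $\Gamma_\gp\hookrightarrow \Gamma'_{\gp'}$) identifies the targets and shows the regulator maps coincide. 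The main obstacle is the bookkeeping in this last step: compatible choices of primes of $K'$ above each $p$-adic prime, matching Mackey double-coset representatives with primes of $k$ above each $\gp'$, and aligning the decomposition subgroups $\Gamma_\gp\subset\Gamma$ and $\Gamma'_{\gp'}\subset\Gamma'$ through the inclusion induced by $k_\infty=kk'_\infty$. Once these identifications are in place, all three claims follow by naturality.
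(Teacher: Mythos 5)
Your parts (a) and (b) are essentially the paper's own argument, and your part (c), while workable, is much heavier than what the paper does. The paper chooses a single splitting field $K$ of $\rho$ that is Galois over $k'$ (so that the same field also splits $\rho'$ and $Kk'_\infty=Kk_\infty$), and then applies Frobenius reciprocity $\Hom_{G'}(\Ind_G^{G'}W,-)\simeq \Hom_G(W,-)$ directly to the single global map $\sL_{K_\infty/K}$, viewed once as $G'$-equivariant and once as $G$-equivariant; the two isotypic components are thereby identified and no local analysis is needed. Your route — Frobenius reciprocity on the source, Mackey decomposition $(W')^0_{\gp'}\simeq\bigoplus_{\gp\mid\gp'}W^0_\gp$ on the target, and the compatibility $\rec_{\Gamma'_{\gp'}}\circ N_{k_\gp/k'_{\gp'}}=\iota\circ\rec_{\Gamma_\gp}$ (note this is the correct formulation; as written your relation is garbled, and the norm enters because evaluating a $G'$-equivariant map on $G'_{\gp'}$-invariants of the induced module sums over cosets) — does go through, but all of the bookkeeping you flag as "the main obstacle" is exactly what the paper's choice of splitting field lets one avoid.

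There is, however, a genuine gap: your blanket restriction to $\mathds{1}\not\subset\rho$ leaves part (c) unproven precisely when $\rho$ contains the trivial representation, e.g.\ $\rho=\mathds{1}_k$, which is the instance of (c) the paper actually uses elsewhere (it gives $\delta^\textbf{G}_{K}=\delta^\textbf{G}_{k}(\Ind_K^k\mathds{1}_K)$ and drives Theorem \ref{thm:list_new_cases_gk} and Theorem \ref{THM:non-cyclotomic_gk}). This case does not follow formally from (b) together with the case $\mathds{1}\not\subset\rho$: writing $\Ind_k^{k'}\mathds{1}_k=\mathds{1}_{k'}\oplus\rho_0$ only reduces the claim to $\delta^\textbf{G}_{k'_\infty/k'}+\delta^\textbf{G}_{k'_\infty/k'}(\rho_0)=\delta^\textbf{G}_{k_\infty/k}$, which is not among the statements you prove, and Corollary \ref{coro:formule_gross_defect_over_Q}(1) only handles the trivial representation over a fixed base. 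The obstruction is that the explicit composite (\ref{eq:definition_sL_rho_Gamma}) you work with describes $\sL_{k_\infty/k}(\rho)$ only for $\mathds{1}\not\subset\rho$ (the isotypic component of the inclusion $\cH_{K_\infty/K}\hookrightarrow\bigoplus_{\gP}\Gamma_\gP$ fails to be an isomorphism on the trivial constituent), so the trivial piece on both sides needs a separate comparison. The fix is exactly the paper's argument: with $K$ Galois over $k'$, Frobenius reciprocity gives $\Hom_{G'}(\Ind_G^{G'}\mathds{1},X)=X^{G}$, so the $\Ind_k^{k'}\mathds{1}_k$-component of $\sL_{K_\infty/K}$ is $(\sL_{K_\infty/K})^G$, which Proposition \ref{prop:identification_sL_rho_composante} identifies with $\sL_{k_\infty/k}$; this treats all $\rho$ uniformly and would also replace your Mackey computation.
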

	\begin{proof}
		Part (b) is obvious from the definition of $\sL_{k_\infty/k}(\rho)$. Part (a) is true if $\rho$ is trivial by Corollary \ref{coro:formule_gross_defect_over_Q} (1). Let $K'/K/k$ be Galois extensions, take $\mathds{1} \not\subseteq {\rho} \in \Art_{\ob{\Q}_p}(\Gal(K/k))$ and denote by $\tilde{\rho}$ its inflation to $\Gal(K'/k)$. Then the maps of (\ref{eq:definition_sL_rho_Gamma}) attached to $\rho$ and $\tilde{\rho}$ coincide on $\Hom_{\Gal(K/k)}(W,\cO_K[\tfrac{1}{p}]^{\times,\wedge})=\Hom_{\Gal(K'/k)}(W,\cO_{K'}[\tfrac{1}{p}]^{\times,\wedge})$, so $\delta^\textbf{G}_{k_\infty/k}(\rho)=\delta^\textbf{G}_{k_\infty/k}(\tilde{\rho})$. Hence, (a) holds for every $\rho\in \Art_{\ob{\Q}_p}(G_k)$. 
		
		As for (c), take any splitting field $K$ of $\rho$ which is Galois over $k'$ and put $G=\Gal(K/k)$, $G'=\Gal(K/k')$. Then Frobenius reciprocity identifies the $\rho'$-isotypic component of $\sL_{K_\infty/K}$ (seen as a $G'$-equivariant map) with the $\rho$-isotypic component of $\sL_{K_\infty/K}$ (seen as a $G$-equivariant map). Hence, the last property follows from Proposition \ref{prop:identification_sL_rho_composante}.
	\end{proof}
	We next define an analogous invariant for Leopoldt's conjecture. For any Galois extension $K/k$ with Galois group $G$, the localization map 
	\begin{equation*}
		\iota_K \colon \cO_K^{\times,\wedge} \longrightarrow \bigoplus_{\gP\in S_p(K)} \cO_{K_\gP}^{\times,\wedge}
	\end{equation*}
	is clearly $G$-equivariant. For any $(W,\rho)\in \Art_{\ob{\bQ}_p}(G)$ we let $\delta^\textbf{L}_k(\rho)$ be the dimension of the kernel of the $\rho$-isotypic component 
	\begin{equation}\label{eq:leopoldts_map_rho}
		\iota_k(\rho)\ \colon\ \Hom_G(W,\cO_K^{\times,\wedge}) \longrightarrow \bigoplus_{\gp\in S_p(k)} \Hom_G(W,\bigoplus_{\gP|\gp} \cO_{K_\gP}^{\times,\wedge}) \simeq \bigoplus_{\gp\in S_p(k)} \Hom_{G_\gp}(W,\cO_{K_{\gP_0}}^{\times,\wedge})
	\end{equation}
	of $\iota_K$. Here (and as in the definition of $\sL_{k_\infty/k}(\rho)$), $\gP_0$ is a fixed place of $K$ above $\gp$ for every place $\gp$ of $k$. The last isomorphism is given by Frobenius reciprocity and is induced by the natural projection map. As in Corollary \ref{coro:artin_formalism}, it is easy to see that the rule $\rho \mapsto \delta^\textbf{L}_k(\rho)$ is compatible with Artin formalism.
	\begin{remark}
		In terms of Bloch-Kato Selmer groups for $\widecheck{W}$, the injectivity of $\iota_k(\rho)$ is equivalent to that of the localization map $\HH^1_\ff(k,\widecheck{W}) \rightarrow \prod_{\gp|p} \HH^1_\ff(k_\gp,\widecheck{W})$. This last statement is Jannsen's conjecture for $\widecheck{W}$ (\cite{jannsen}).
	\end{remark}
	The following lemma on local Galois representations with finite image will help us describe $\delta^\textbf{L}_k(\rho)$ in another way if $k=\Q$. 
	\begin{lemme}\label{lem:fontaine}
		Let $(W,\rho)\in\Art_{\ob{\bQ}_p}(G_{\Qp})$ be a local representation factoring through the Galois group of a finite extension $E \subset \ob{\Q}_p$ of $\Qp$. Then the internal multiplication map $\ob{\Q}_p \otimes_{\Qp} E \rightarrow \ob{\Q}_p$ induces an isomorphism
		$$m \ \colon\ \Hom_{G_{\Qp}}(W,\ob{\Q}_p \otimes_{\Qp} E) \simeq \Hom_{\ob{\bQ}_p}(W,\ob{\Q}_p).$$
		Here, we let $G_{\Qp}$ act on $\ob{\Q}_p \otimes_{\Qp} E$ via $g(a\otimes x)=a\otimes g(x)$.
	\end{lemme}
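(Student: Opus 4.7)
The plan is to identify $\ob{\Q}_p \otimes_{\Qp} E$ explicitly as an $H$-representation and then invoke Frobenius reciprocity. Replacing $E$ by its Galois closure inside $\ob{\Q}_p$ if necessary, I may assume without loss of generality that $E/\Qp$ is Galois, and I set $H := \Gal(E/\Qp)$. Since both $W$ and $\ob{\Q}_p\otimes_{\Qp} E$ become $H$-modules through the quotient $G_{\Qp}\twoheadrightarrow H$, the group $\Hom_{G_{\Qp}}(W,\ob{\Q}_p\otimes_{\Qp} E)$ equals $\Hom_H(W,\ob{\Q}_p\otimes_{\Qp} E)$.

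The key step is the étale algebra isomorphism
\[
T \colon \ob{\Q}_p \otimes_{\Qp} E \xrightarrow{\ \sim\ } \prod_{\sigma \in H} \ob{\Q}_p, \qquad a \otimes x \longmapsto \bigl(a\cdot\sigma(x)\bigr)_{\sigma \in H},
\]
which is available because $E/\Qp$ is separable. A direct calculation transports the action $g(a\otimes x)=a\otimes g(x)$ to the action $(v_\sigma)_\sigma\mapsto (v_{\sigma g})_\sigma$ on the right hand side, so $\ob{\Q}_p\otimes_{\Qp} E$ is isomorphic, as an $H$-representation, to the regular representation $\ob{\Q}_p[H]\simeq \Ind^H_{\{1\}}\ob{\Q}_p$ of $H$. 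Furthermore, since $m(a\otimes x)=ax=T(a\otimes x)_{\mathrm{id}}$, the multiplication map $m$ corresponds under $T$ to the projection onto the $\sigma=\mathrm{id}$ factor.

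With these identifications, $\phi\mapsto m\circ\phi$ becomes precisely the Frobenius reciprocity isomorphism
\[
\Hom_H\bigl(W,\Ind^H_{\{1\}}\ob{\Q}_p\bigr)\xrightarrow{\ \sim\ }\Hom_{\ob{\Q}_p}(W,\ob{\Q}_p),
\]
which proves the lemma. Alternatively, injectivity may be verified by hand: if $\phi$ is $H$-equivariant with $m\circ\phi=0$, then by equivariance and the formula for the transported action one checks that $T(\phi(w))_\sigma = m(\phi(\sigma w)) = 0$ for all $\sigma \in H$ and $w \in W$, forcing $\phi=0$; surjectivity then follows from a dimension count, since both sides have $\ob{\Q}_p$-dimension $\dim W$. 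The only real subtlety here is keeping track of the $H$-action on $\prod_{\sigma\in H}\ob{\Q}_p$ to ensure the identification of $m$ with the evaluation-at-identity functional underlying Frobenius reciprocity.
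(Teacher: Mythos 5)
Your proof is correct, but it takes a genuinely different route from the paper. The paper first reduces to injectivity of $m$ via the normal basis theorem $E\simeq \Qp[\Gal(E/\Qp)]$, then realizes $W$ over a finite Galois extension $L/\Qp$, dualizes, and invokes the $\ob{\Q}_p$-admissibility (in Fontaine's sense) of Galois representations with finite image, citing \cite[Proposition 2.7]{fontaineouyang}; the dualizing step is precisely what converts the lemma's linear action on the single factor $E$ into the setting where admissibility applies. You avoid this machinery altogether: splitting the \'etale algebra $\ob{\Q}_p\otimes_{\Qp}E\simeq\prod_{\sigma\in H}\ob{\Q}_p$ and checking that the transported action is $(v_\sigma)_\sigma\mapsto(v_{\sigma g})_\sigma$ exhibits $\ob{\Q}_p\otimes_{\Qp}E$ as the regular representation $\ob{\Q}_p[H]\simeq\Ind_{\{1\}}^{H}\ob{\Q}_p$, under which $m$ becomes evaluation at the identity, so the statement is literally Frobenius reciprocity (your hands-on injectivity check plus the dimension count $\dim\Hom_H(W,\ob{\Q}_p[H])=\dim W$ is a fine substitute). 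What each approach buys: yours is elementary and self-contained and makes transparent that the whole content of the lemma is the identification of $\ob{\Q}_p\otimes_{\Qp}E$, with the action on the $E$-factor only, as the regular representation; the paper's argument is shorter modulo the cited input and is phrased in a way that would survive in situations where one must handle the semilinear action on both tensor factors. One small caveat: ``replacing $E$ by its Galois closure'' is not literally without loss of generality, since it would change the module $\ob{\Q}_p\otimes_{\Qp}E$ appearing in the statement; but the precaution is unnecessary anyway, because the action $g(a\otimes x)=a\otimes g(x)$ only makes sense when $E/\Qp$ is normal, so the hypothesis already forces $E/\Qp$ to be Galois (the paper's own proof uses this implicitly through the normal basis theorem).
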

	\begin{proof}
		Since $E\simeq \Qp[\Gal(E/\Qp)]$ as a Galois module, it is enough to show that $m$ is injective. Choose any finite Galois extension $L/\Qp$ which contains $E$ and over which $\rho$ is realizable, \textit{i.e.}, there exist a $L[G_{\Qp}]$-module $W_L$ and an isomorphism $W_L \otimes_L \ob{\Q}_p \simeq W$. Then we have to show that the map $\Hom_{L[G_{\Qp}]}(W_L, L \otimes_{\Qp} E) \to \Hom_{L}(W_L,L)$ is injective. By considering $\Qp$-linear homomorphisms instead of $L$-linear ones in the previous map, it suffices to prove that the map $(V \otimes_{\Qp} \ob{\Q}_p)^{G_{\Qp}} \otimes_{\Qp} \ob{\Q}_p \to V \otimes_{\Qp} \ob{\Q}_p$ given by $(v\otimes a)\otimes b \mapsto v \otimes (ab)$ is injective, where $V=\Hom_{\Qp}(W_L,\Qp)$ and $G_{\Qp}$ acts on both factors of $V \otimes_{\Qp} \ob{\Q}_p$. But $V$ is finite dimensional over $\Qp$, so this follows from the $\ob{\Q}_p$-admissibility in the sense of Fontaine of Galois representations of finite image (see e.g. \cite[Proposition 2.7]{fontaineouyang}).
	\end{proof}
	
	\begin{proposition}\label{prop:alternate_decription_leopoldt_defect}
		Let $(W,\rho)\in \Art_{\ob{\bQ}_p}(G_\Q)$, let $K\subset \ob{\Q}$ be a splitting field of $\rho$ and let $\gP_0$ be the $p$-adic place of $K$ defined by a fixed embedding $\iota_p \colon \ob{\Q} \hookrightarrow \ob{\Q}_p$. We have 
		\[\delta^\textbf{L}_\Q(\rho)=\dim \ker \left[\cL \ \colon\ \Hom_{G_{\Q}}(W,\cO_K^{\times,\wedge}) \rightarrow \Hom(W,\ob{\Q}_p)\right], \]
		the map $\cL$ being the post-composition by $\log_p  \colon \cO_K^{\times,\wedge} \to \ob{\Q}_p \otimes_{\Qp} K_{\gP_0}$ followed by the internal multiplication $\ob{\Q}_p \otimes_{\Qp} K_{\gP_0} \to \ob{\Q}_p$.
	\end{proposition}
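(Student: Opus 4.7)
The plan is to factor the map $\iota_\Q(\rho)$ through two canonical isomorphisms so that the remaining piece is exactly $\cL$; since isomorphisms preserve kernels, the desired equality of dimensions will follow.

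First I would unpack the definition (\ref{eq:leopoldts_map_rho}) in the case $k=\Q$. As $S_p(\Q)=\{p\}$ has a single element and $K/\Q$ is Galois (being the splitting field of $\rho$), all $p$-adic places of $K$ are conjugate under $G=\Gal(K/\Q)$. Frobenius reciprocity identifies the target of $\iota_\Q(\rho)$ with $\Hom_{G_p}(W,\cO_{K_{\gP_0}}^{\times,\wedge})$, where $G_p=\Gal(K_{\gP_0}/\Qp)$ is the decomposition subgroup at $\gP_0$, and presents $\iota_\Q(\rho)$ as post-composition with the localization $\loc_{\gP_0}$. Hence $\delta^\textbf{L}_\Q(\rho)=\dim\ker\iota_\Q(\rho)$.

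Next I would replace the target by its image under the $p$-adic logarithm. Since $\log_p\colon \cO_{K_{\gP_0}}^\times\to K_{\gP_0}$ has finite kernel (the torsion roots of unity) and open image, it induces a $G_p$-equivariant isomorphism of $\ob{\Q}_p$-vector spaces
\[\log_p\colon\ \cO_{K_{\gP_0}}^{\times,\wedge}\ \xrightarrow{\ \sim\ }\ \ob{\Q}_p\otimes_{\Qp}K_{\gP_0}.\]
Post-composing by $\log_p$ is therefore an isomorphism on the $\Hom_{G_p}(W,-)$ functor and does not change the kernel.

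The last step, and the one that carries the real content, is Lemma~\ref{lem:fontaine}: applied to the local representation $\rho|_{G_{\Qp}}$ (which factors through $G_p$ via $\iota_p$) and to $E=K_{\gP_0}$, it supplies the $\ob{\Q}_p$-linear isomorphism $m\colon \Hom_{G_p}(W,\ob{\Q}_p\otimes_{\Qp}K_{\gP_0})\xrightarrow{\sim}\Hom(W,\ob{\Q}_p)$ induced by internal multiplication. Chaining the three identifications, the map $\phi\mapsto m\bigl(\log_p\circ\loc_{\gP_0}\circ\phi\bigr)$ is precisely $\cL$, and its kernel coincides with that of $\iota_\Q(\rho)$, yielding the claim. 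The main conceptual input is Lemma~\ref{lem:fontaine} (the Fontaine-type admissibility argument); everything else is the routine bookkeeping of Frobenius reciprocity and the standard structure of local units.
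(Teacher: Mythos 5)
Your proof is correct and follows essentially the same route as the paper: identify $\iota_\Q(\rho)$ via Frobenius reciprocity, use that $\log_p$ induces $\cO_{K_{\gP_0}}^{\times,\wedge}\simeq \ob{\Q}_p\otimes_{\Qp}K_{\gP_0}$, and then invoke Lemma \ref{lem:fontaine} to identify the target with $\Hom(W,\ob{\Q}_p)$, so that $\cL$ is $\iota_\Q(\rho)$ composed with isomorphisms. No gaps; your extra bookkeeping on the $k=\Q$ specialization of (\ref{eq:leopoldts_map_rho}) is just the definition made explicit.
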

	\begin{proof}
		First note that the $p$-adic logarithm $\log_p \colon \cO_E^\times \rightarrow E$ over a finite extension $E$ of $\Qp$ induces an isomorphism $\cO_E^{\times,\wedge} \simeq \ob{\bQ}_p \otimes_{\Qp} E$. Applying Lemma \ref{lem:fontaine} to $E=K_{\gP_0}$ we obtain isomorphisms 
		$\Hom_{G_{\Q_p}}(W,\cO_{K_{\gP_0}}^{\times,\wedge})\simeq \Hom_{G_{\Q_p}}(W,\ob{\Q}_p \otimes_{\Qp}K_{\gP_0})\simeq \Hom(W,\ob{\Q}_p)$. The map $\cL$ is simply the map $\iota_{\Q}(\rho)$ composed with these isomorphisms, so the claim follows.
	\end{proof}
	
	\section{Bounds on Leopoldt's and Gross's defects}\label{sec:bounds}
	Throughout this section we fix an embedding $\iota_p \colon \ob{\Q} \hookrightarrow \ob{\Q}_p$.
	\subsection{Bounds on Leopoldt's defect}
	\begin{theorem}\label{thm:leopoldt_main}
		Let $\rho\in \Art_{\ob{\bQ}_p}(G_\Q)$ be irreducible and let $d=d(\rho)$, $d^+=d^+(\rho)$. We have $\delta^\textbf{L}_\bQ(\rho) \leq \frac{(d^+)^2}{d+d^+}$.
	\end{theorem}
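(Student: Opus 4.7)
The plan is to combine Proposition~\ref{prop:alternate_decription_leopoldt_defect} with Waldschmidt-Roy's theorem (Theorem~\ref{thm:waldschmidt_roy}). By Proposition~\ref{prop:alternate_decription_leopoldt_defect}, $\delta^\textbf{L}_\bQ(\rho) = \dim \ker \cL$, where $\cL$ is an $\ob{\bQ}_p$-linear map from $\Hom_{G_\Q}(W, \cO_K^{\times,\wedge})$ to $\Hom(W, \ob{\bQ}_p)$. The source has dimension $d^+$ by Lemma~\ref{lem:properties_invariants_d_d+_and_f}(2) (the case $\rho = \mathds{1}$ being trivial, since then $\cO_\Q^{\times,\wedge}=0$), and the target has dimension $d$. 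The stated inequality is therefore equivalent to $\rk \cL \geq \frac{d\, d^+}{d+d^+}$, and this is exactly what Theorem~\ref{thm:waldschmidt_roy} produces when applied to a matrix $M$ representing $\cL$, provided we can establish $\theta(M) = d^+/d$.

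Since $\rho$ has finite image, it admits a $\ob{\bQ}$-form $W_0$, which remains irreducible as a $\ob{\bQ}[G]$-module with $G = \Gal(K/\bQ)$ (a proper $\ob{\bQ}[G]$-subrepresentation of $W_0$ would yield, after base change to $\ob{\bQ}_p$, a proper $\ob{\bQ}_p[G]$-subrepresentation of $W$). Fix a $\ob{\bQ}$-basis $w_1,\dots,w_d$ of $W_0$ and a $\ob{\bQ}$-basis $u_1,\dots,u_{d^+}$ of the $d^+$-dimensional $\ob{\bQ}$-vector space $\Hom_{G_\Q}(W_0, \ob{\bQ}\otimes_\Z \cO_K^\times)$, whose base change to $\ob{\bQ}_p$ is the source of $\cL$. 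In these bases, $\cL$ is represented by the $d\times d^+$ matrix $M$ with entries $M_{jl} = \log_p(\iota_p(u_l(w_j)))$; since $u_l(w_j) \in \ob{\bQ}\otimes_\Z\cO_K^\times$, these entries lie in $\Lambda$, as required by Theorem~\ref{thm:waldschmidt_roy}.

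The main obstacle is the lower bound $\theta(M) \geq d^+/d$. A block decomposition $PMQ = \left(\begin{smallmatrix} M' & 0 \\ N & M'' \end{smallmatrix}\right)$ with $P \in \GL_d(\ob{\bQ})$, $Q \in \GL_{d^+}(\ob{\bQ})$, $m' > 0$ and $M'$ of size $m'\times \ell'$ amounts, by a straightforward change-of-basis translation, to the existence of $\ob{\bQ}$-subspaces $V \subset W_0$ and $U \subset \Hom_{G_\Q}(W_0, \ob{\bQ}\otimes_\Z\cO_K^\times)$ of dimensions $m'$ and $d^+-\ell'$ respectively such that $\log_p(\iota_p(u(v))) = 0$ for every $(u,v) \in U\times V$. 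Proposition~\ref{prop:log_p_almost_injective}, combined with the obvious fact that $p \notin \cO_K^\times$, implies that $\log_p \circ \iota_p$ is injective on $\ob{\bQ}\otimes_\Z\cO_K^\times$, so the vanishing of the pairing forces $u(v) = 0$ for all such $(u,v)$. At this point the irreducibility of $\rho$ enters decisively: the $\ob{\bQ}[G]$-submodule of $W_0$ generated by any nonzero $V$ is all of $W_0$, and the $G$-equivariance of $u \in U$ then forces $u = 0$ on $W_0$. Hence $U = 0$, i.e.\ $\ell' = d^+$, so $\ell'/m' = d^+/m' \geq d^+/d$, which gives $\theta(M) = d^+/d$.

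Applying Theorem~\ref{thm:waldschmidt_roy} with $m = d$ and $\theta(M) = d^+/d$ then yields $\rk M \geq \frac{d^+/d}{1+d^+/d}\cdot d = \frac{d\, d^+}{d+d^+}$, and therefore $\delta^\textbf{L}_\bQ(\rho) = d^+ - \rk M \leq \frac{(d^+)^2}{d+d^+}$, as claimed.
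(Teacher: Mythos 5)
Your proposal is correct and takes essentially the same route as the paper: reduce via Proposition~\ref{prop:alternate_decription_leopoldt_defect} to bounding the rank of a $d\times d^+$ matrix $M$ of logarithms of units in the $\rho$-isotypic part of $\ob{\Q}\otimes\cO_K^\times$, and apply Theorem~\ref{thm:waldschmidt_roy} together with Proposition~\ref{prop:log_p_almost_injective}. The only deviation is in how $\theta(M)=d^+/d$ is checked: the paper picks a basis adapted to an isomorphism $e(\rho)\cdot(\ob{\Q}\otimes\cO_K^\times)\simeq W_{\ob{\Q}}^{\oplus d^+}$ so that all entries of $M$ are $\ob{\Q}$-linearly independent, while you verify the bound directly from the definition of $\theta$, using the injectivity of $\log_p$ on $\ob{\Q}\otimes\cO_K^\times$ and the irreducibility of the $\ob{\Q}$-form to rule out a nontrivial zero block — an equivalent and equally valid verification.
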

	\begin{proof}
		Since $\delta_\Q^\textbf{L}(\mathds{1})=\delta_\Q^\textbf{L}=0$, we may assume that $\rho\neq \mathds{1}$. Let $K$ be a splitting field of $\rho$ and let $G=\Gal(K/\Q)$. Recall from Lemma \ref{lem:properties_invariants_d_d+_and_f} that $\dim \Hom_G(W,\cO_K^{\times,\wedge})=d^+$. By Proposition \ref{prop:alternate_decription_leopoldt_defect} it is enough to show that the rank of the map $\Hom_G(W,\cO_K^{\times,\wedge})\rightarrow \Hom(W,\ob{\Q}_p)$ induced by $a\otimes x \mapsto a\log_p(\iota_p(x))$ on $\cO_K^{\times,\wedge}$ is at least $\frac{d\cdot d^+}{d+d^+}$. 
		
		Consider a $\ob{\bQ}$-structure on $W$, that is, a $\ob{\Q}$-linear representation $W_{\ob{\Q}}$ of $G$ such that $W_{\ob{\Q}}\otimes_{\ob{\bQ}} \ob{\Q}_p \simeq W$. Let also $w_1,\ldots,w_d$ be a $\ob{\Q}$-basis of $W_{\ob{\bQ}}$. We fix an isomorphism 
		$$e(\rho) \cdot \left(\ob{\Q} \otimes \cO_K^\times\right) \simeq W_{\ob{\Q}}^{\oplus d^+},$$ 
		where $e(\rho)$ is the idempotent attached to $\rho$ defined in \eqref{eq:idempotent}. Using this isomorphism, we may define $G$-equivariant morphisms $\Psi_1,\ldots,\Psi_{d^+} \colon W_{\ob{\Q}} \longrightarrow \ob{\Q} \otimes \cO_K^\times$ which form a basis of $\Hom_G(W,\cO_K^{\times,\wedge})$. Moreover, the elements $\Psi_j(w_i) \in \ob{\bQ} \otimes \cO_K^\times$ for $1\leq i \leq d$, $1 \leq j \leq d^+$ are $\ob{\bQ}$-linearly independent by construction, as well as their $p$-adic logarithms by Proposition \ref{prop:log_p_almost_injective}. Hence, the matrix $M=(\log_p(\iota_p(\Psi_j(w_i))))_{i,j}$ of size $d\times d^+$ satisfies $\theta(M)=\frac{d^+}{d}$ and Theorem \ref{thm:waldschmidt_roy} implies $\rk M \geq \frac{d\cdot d^+}{d+d^+}$ as claimed.
	\end{proof}
	\begin{corollaire}\label{coro:waldschmidt_bound}
		\begin{enumerate}
			\item Let $\rho \in \Art_{\ob{\bQ}_p}(G_k)$ and let $d^+=d^+(\rho)$. We have $\delta_k^\textbf{L}(\rho) \leq d^+/2$. 
			\item For every finite extensions $K/k$ of number fields, we have $\delta^\textbf{L}_K \leq \delta^\textbf{L}_k + \left(\rk \cO_K^\times - \rk \cO_k^\times \right)/2$. 
		\end{enumerate} 
	\end{corollaire}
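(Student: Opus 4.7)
The plan is to reduce both claims to Theorem~\ref{thm:leopoldt_main} by exploiting the compatibility of $\delta^\textbf{L}$ and $d^+$ with Artin formalism.

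For part (1), I would start from an arbitrary $\rho\in\Art_{\ob{\Q}_p}(G_k)$ and pass to its induction $\rho':=\Ind_k^\Q\rho\in\Art_{\ob{\Q}_p}(G_\Q)$. Since $\delta^\textbf{L}_k(\rho)=\delta^\textbf{L}_\Q(\rho')$ by property (c) of Artin formalism (noted right after \eqref{eq:leopoldts_map_rho}), and similarly $d^+(\rho)=d^+(\rho')$ by Lemma~\ref{lem:properties_invariants_d_d+_and_f}(1), it suffices to prove the bound over $\Q$. Decomposing $\rho'=\bigoplus_i\rho_i$ into $\ob{\Q}_p$-irreducibles and using additivity (property (b)), I would write $\delta^\textbf{L}_\Q(\rho')=\sum_i\delta^\textbf{L}_\Q(\rho_i)$ and apply Theorem~\ref{thm:leopoldt_main} to each summand, obtaining $\delta^\textbf{L}_\Q(\rho_i)\leq (d^+_i)^2/(d_i+d^+_i)$ with $d_i=d(\rho_i)$, $d^+_i=d^+(\rho_i)$.

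The only real ingredient left is the inequality $d^+_i\leq d_i$, which over $\Q$ says $d^+(\rho_i)\leq \dim\rho_i$; this is Lemma~\ref{lem:properties_invariants_d_d+_and_f}(4) applied to $a=d^+$ and $L=\Q$, since $d^+(\mathds{1}_\Q)=|S_\infty(\Q)|=1$. From $d^+_i\leq d_i$ one gets $(d^+_i)^2/(d_i+d^+_i)\leq d^+_i/2$, and summing yields
\[
\delta^\textbf{L}_k(\rho)=\sum_i\delta^\textbf{L}_\Q(\rho_i)\leq \tfrac{1}{2}\sum_i d^+_i=\tfrac{1}{2}d^+(\rho'),
\]
which equals $d^+(\rho)/2$ as desired.

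For part (2), I would apply part (1) to the representation $\rho\in\Art_{\ob{\Q}_p}(G_k)$ defined by the decomposition $\Ind^k_K\mathds{1}_K=\mathds{1}_k\oplus\rho$ (the trivial character occurs exactly once in the regular-type representation $\Ind^k_K\mathds{1}_K$ by Frobenius reciprocity). Artin formalism then gives
\[
\delta^\textbf{L}_K=\delta^\textbf{L}_k(\Ind^k_K\mathds{1}_K)=\delta^\textbf{L}_k(\mathds{1}_k)+\delta^\textbf{L}_k(\rho)=\delta^\textbf{L}_k+\delta^\textbf{L}_k(\rho),
\]
while the analogous identity for $d^+$ reads $d^+(\rho)=|S_\infty(K)|-|S_\infty(k)|$, which by Dirichlet's unit theorem equals $\rk\cO_K^\times-\rk\cO_k^\times$. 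Combining with the bound from part (1) finishes the proof.

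There is no real obstacle here: the argument is bookkeeping once Theorem~\ref{thm:leopoldt_main} and the Artin formalism of Section~\ref{sec:regulator} are in hand. The only subtle point to flag is that $d^+\leq d$ — a small observation that turns the shape $(d^+)^2/(d+d^+)$ of Theorem~\ref{thm:leopoldt_main} into Waldschmidt's linear bound $d^+/2$.
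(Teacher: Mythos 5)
Your proposal is correct and takes essentially the same route as the paper: reduce to irreducible representations of $G_\Q$ via Artin formalism, apply Theorem~\ref{thm:leopoldt_main} together with $d^+\leq d$ to get $(d^+)^2/(d+d^+)\leq d^+/2$, and deduce (2) by applying (1) to the $\rho_0$ with $\Ind_K^k\mathds{1}_K=\mathds{1}_k\oplus\rho_0$ and identifying $d^+(\rho_0)=\rk\cO_K^\times-\rk\cO_k^\times$. The paper's proof is simply a terser statement of the same bookkeeping.
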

	\begin{proof}
		Since $d^+\leq d$, the first inequality obviously follows from Theorem \ref{thm:leopoldt_main} for irreducible $\rho\in \Art_{\ob{\bQ}_p}(G_\bQ)$, hence it follows for general $\rho$ from Artin formalism. By Lemma \ref{lem:properties_invariants_d_d+_and_f} (2), the unique representation $\rho_0$ of $G_k$ such that $\Ind_K^k \mathds{1}=\mathds{1} \oplus \rho_0$ satisfies $d^+(\rho_0)=\rk \cO_K^\times - \rk \cO_k^\times$, so the second inequality follows from the first one applied to $\rho_0$.
	\end{proof}
	\subsection{Bounds on Gross's defect}
	\begin{theorem}\label{thm:g_k_main}
		Let $\rho\in \Art_{\ob{\bQ}_p}(G_\Q)$ be irreducible and let $f=f(\rho)$, $d^+=d^+(\rho)$. If $d^+=f=0$, then $\delta_\Q^\textbf{G}(\rho)=0$. Otherwise, we have $\delta_\bQ^\textbf{G}(\rho) \leq \frac{f^2}{d^++2f}$.
	\end{theorem}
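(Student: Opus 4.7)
The plan is to mimic the proof of Theorem \ref{thm:leopoldt_main} but work with $p$-units instead of units, which forces us to account for the extra kernel direction of $\log_p$ identified in Proposition \ref{prop:log_p_almost_injective}. First, I would dispose of the degenerate cases. If $d^+ = f = 0$, then by Lemma \ref{lem:properties_invariants_d_d+_and_f}(2) the domain $\Hom_G(W, \cO_K[\tfrac{1}{p}]^{\times,\wedge})$ of $\sL_\Q(\rho)$ vanishes, and if $f = 0$ the codomain $\Hom(W_p^0, \ob{\Q}_p)$ vanishes; in either case $\delta^{\textbf{G}}_\Q(\rho) = 0$. The case $\rho = \mathds{1}$ is handled directly: since $p$ is the unique prime of $\Q$ above $p$, the hyperplane $\cH_{\Q_\cyc/\Q}$ is trivial, hence $\delta^\textbf{G}_\Q(\mathds{1}) = \delta^\textbf{G}_\Q = 0$, which fits the bound. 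We may thus assume $\rho$ is irreducible, non-trivial, and $f > 0$.

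Let $K$ be a splitting field of $\rho$ and $G = \Gal(K/\Q)$, and fix a $\ob{\Q}$-structure $W_{\ob{\Q}}$ of $W$. Choose a $\ob{\Q}$-basis $v_1, \ldots, v_f$ of $W_{\ob{\Q}}^{G_p}$ (where $G_p \subset G$ is the decomposition subgroup at the prime $\gP_0$ determined by $\iota_p$), and extend it to a basis $v_1, \ldots, v_d$ of $W_{\ob{\Q}}$. By Lemma \ref{lem:properties_invariants_d_d+_and_f}(2) together with Dirichlet's unit theorem applied to the $p$-units, the isotypic component $e(\rho) \cdot (\ob{\Q} \otimes \cO_K[\tfrac{1}{p}]^\times)$ is isomorphic to $W_{\ob{\Q}}^{\oplus(d^+ + f)}$. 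This choice of identification yields $G$-equivariant morphisms $\Psi_1, \ldots, \Psi_{d^+ + f} \colon W_{\ob{\Q}} \to \ob{\Q} \otimes \cO_K[\tfrac{1}{p}]^\times$ forming a $\ob{\Q}_p$-basis of $\Hom_G(W, \cO_K[\tfrac{1}{p}]^{\times,\wedge})$. By Corollary \ref{coro:formule_gross_defect_over_Q}(2), the matrix of $\sL_\Q(\rho)$ in these bases is
\[
M = \bigl(\log_p(\iota_p(\Psi_j(v_i)))\bigr)_{1 \leq i \leq f,\ 1 \leq j \leq d^+ + f},
\]
a matrix of size $f \times (d^+ + f)$ with entries in $\Lambda$.

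The main step, which constitutes the key new input compared to the situation of Theorem \ref{thm:leopoldt_main}, is showing that the entries of $M$ are $\ob{\Q}$-linearly independent, despite the presence of the $p$-unit $p$. By construction the elements $\Psi_j(v_i)$ are $\ob{\Q}$-linearly independent inside $e(\rho) \cdot (\ob{\Q} \otimes \cO_K[\tfrac{1}{p}]^\times)$. Because $\rho$ is irreducible and $\rho \neq \mathds{1}$, this isotypic component contains no trivial $G$-subrepresentation; on the other hand, the line $p^{\ob{\Q}} = \ob{\Q} \cdot (1 \otimes p)$ is a trivial $G$-subrepresentation of $\ob{\Q} \otimes \cO_K[\tfrac{1}{p}]^\times$. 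Hence $p^{\ob{\Q}} \cap e(\rho) \cdot (\ob{\Q} \otimes \cO_K[\tfrac{1}{p}]^\times) = 0$, so the $\Psi_j(v_i)$ remain $\ob{\Q}$-linearly independent modulo $p^{\ob{\Q}}$, and Proposition \ref{prop:log_p_almost_injective} then guarantees that their $p$-adic logarithms are $\ob{\Q}$-linearly independent in $\Lambda$. Consequently $\theta(M) = (d^+ + f)/f$, and Theorem \ref{thm:waldschmidt_roy} gives
\[
\rk M \;\geq\; \frac{\theta(M)}{1 + \theta(M)} \cdot f \;=\; \frac{f(d^+ + f)}{d^+ + 2f},
\]
from which $\dim \coker \sL_\Q(\rho) = f - \rk M \leq \frac{f^2}{d^+ + 2f}$. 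The conceptual obstacle, and the reason the classical techniques of Laurent and Roy do not apply directly, is exactly the management of the trivial line $p^{\ob{\Q}}$; the key insight is that irreducibility of $\rho$ together with Proposition \ref{prop:log_p_almost_injective} allows us to sidestep this obstruction and reduce to the same transcendence input as in the Leopoldt case.
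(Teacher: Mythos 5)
Your proposal is correct and follows essentially the same route as the paper: the same reduction via Corollary \ref{coro:formule_gross_defect_over_Q}(2), the same choice of a $\ob{\Q}$-structure and basis $\Psi_j(w_i)$ indexed by a basis of $W_{\ob{\Q},p}^0$, and the same key observation that $e(\rho)$ annihilates the line $p^{\ob{\Q}}$ so that Proposition \ref{prop:log_p_almost_injective} yields $\ob{\Q}$-linear independence of the matrix entries, before applying Theorem \ref{thm:waldschmidt_roy}. Your slightly more explicit treatment of the degenerate cases and of the intersection $p^{\ob{\Q}} \cap e(\rho)\cdot(\ob{\Q}\otimes\cO_K[\tfrac{1}{p}]^\times)=0$ is just an expanded phrasing of the paper's argument.
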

	\begin{proof}
		Recall from Section \ref{sec:isotypic} that $\dim \Hom_G(W,\cO_K[\tfrac{1}{p}]^{\times,\wedge})=d^++f$ and that $\dim W_p^0=f$. If $\rho=\mathds{1}$ or $f=0$, then the codomain of $\sL_\Q(\rho)$ is $\{0\}$, yielding $\delta_\Q^\textbf{G}(\rho)=0$. We assume henceforth that $\rho\neq \mathds{1}$ and $f>0$. Let $K$ be a splitting field of $\rho$ and let $G=\Gal(K/\Q)$. By Corollary \ref{coro:formule_gross_defect_over_Q} (2) and Proposition \ref{prop:identification_sL_rho_composante} it is enough to show that the rank of the map $\Hom_G(W,\cO_K[\tfrac{1}{p}]^{\times,\wedge})\rightarrow \Hom(W_p^0,\ob{\Q}_p)$ induced by $a\otimes x \mapsto a\log_p(\iota_p(x))$ on $\cO_K[\tfrac{1}{p}]^{\times,\wedge}$ is at least $\frac{(d^++f)\cdot f}{d^++2f}$.
		
		As in the proof of Theorem \ref{thm:leopoldt_main}, fix a $\ob{\Q}$-structure $W_{\ob{\Q}}$ of $W$. Fix also a basis $w_1,\ldots,w_f$ of the subspace $W_{\ob{\Q},p}^0$ of $G_{\Qp}$-invariants of $W_{\ob{\Q}}$ and an isomorphism 
		$$e(\rho)\cdot \left(\ob{\Q} \otimes \cO_K[\tfrac{1}{p}]^\times\right) \simeq W_{\ob{\Q}}^{\oplus (d^++f)}.$$
		 These choices yield a basis $\Psi_1,\ldots,\Psi_{d^++f}$ of $\Hom_G(W,\cO_K[\tfrac{1}{p}]^{\times,\wedge})$ such that the elements $\Psi_j(w_i) \in \ob{\bQ} \otimes \cO_K^\times$ for $1\leq i \leq f$, $1 \leq j \leq d^++f$ are $\ob{\bQ}$-linearly independent. Since $e(\rho)$ kills $p^{\ob{\Q}}$, we deduce from Proposition \ref{prop:log_p_almost_injective} that the entries of the matrix $M'=(\log_p(\iota_p(\Psi_j(w_i))))_{i,j}$ of size $f \times (d^++f)$ are $\ob{\Q}$-linearly independent as well. Therefore, $\theta(M')=\tfrac{d^++f}{f}$ and Theorem \ref{thm:waldschmidt_roy} implies $\rk M' \geq \tfrac{(d^++f)\cdot f}{d^++2f}$.
	\end{proof}
	\begin{corollaire}\label{coro:gk_bound}
		\begin{enumerate}
			\item Let $\rho \in \Art_{\ob{\bQ}_p}(G_k)$, let $K$ be the field cut out by $\rho$ and let $f=f(\rho)$. Then the following inequalities hold true:
			\[\delta_k^\textbf{G}(\rho)\ \left\{\begin{array}{ccl}
				\leq &f/2 &  \\
				< &f/2 & \mbox{if $f\neq 0$ and $K$ contains at least one real place}\\
				\leq &f/3 & \mbox{if $K$ is totally real}
			\end{array}\right.
			\]
			\item Let $K/k$ be a finite extension of number fields. Then the following inequalities hold true:
			\[\delta_K^\textbf{G}\ \left\{\begin{array}{ccl}
				\leq & \delta_k^\textbf{G} + \left(|S_p(K)|-|S_p(k)|\right))/2 &  \\
				< & \delta_k^\textbf{G} + \left(|S_p(K)|-|S_p(k)|\right))/2 & \mbox{if $|S_p(K)|\neq|S_p(k)|$ and $K$ contains at least one real place}\\
				\leq & \delta_k^\textbf{G} + \left(|S_p(K)|-|S_p(k)|\right))/3 & \mbox{if $K$ is totally real}
			\end{array}\right.
			\]
		\end{enumerate} 
	\end{corollaire}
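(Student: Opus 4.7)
The plan is to deduce both parts from Theorem \ref{thm:g_k_main} via Artin formalism (Corollary \ref{coro:artin_formalism}). For (1), I would decompose $\Ind_k^\Q \rho = \bigoplus_i \rho_i$ into $G_\Q$-irreducibles, so that $\delta^\textbf{G}_k(\rho) = \sum_i \delta^\textbf{G}_\Q(\rho_i)$ while $d^+$ and $f$ remain additive across the decomposition. The first bound then follows termwise from $f(\rho_i)^2/(d^+(\rho_i) + 2f(\rho_i)) \leq f(\rho_i)/2$, the case $f(\rho_i) = 0$ being covered by the exceptional clause of Theorem \ref{thm:g_k_main}.

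For the strict inequality in (1), I would invoke Lemma \ref{lem:properties_invariants_d_d+_and_f}(3): if $K$ has a real place, then every irreducible subrepresentation of $\Ind_k^\Q \rho$ satisfies $d^+(\rho_i) \geq 1$. Since $f(\rho) > 0$ forces $f(\rho_{i_0}) > 0$ for some $i_0$, Theorem \ref{thm:g_k_main} then gives $\delta^\textbf{G}_\Q(\rho_{i_0}) \leq f(\rho_{i_0})^2/(1 + 2f(\rho_{i_0})) < f(\rho_{i_0})/2$, which yields strictness upon summing. For the $f/3$ bound when $K$ is totally real, the same lemma yields $d^+(\rho) = d(\rho)$; combined with the pointwise inequality $d^+(\rho_i) \leq d(\rho_i)$, equality of sums forces $d^+(\rho_i) = d(\rho_i) = \dim \rho_i$ for every $i$. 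Since $f(\rho_i) \leq d(\rho_i)$ by Lemma \ref{lem:properties_invariants_d_d+_and_f}(4), one obtains $f(\rho_i)^2/(d^+(\rho_i) + 2f(\rho_i)) \leq f(\rho_i)^2/(3f(\rho_i)) = f(\rho_i)/3$.

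For (2), I would introduce $\chi_0 \in \Art_{\ob{\Q}_p}(G_k)$ satisfying $\Ind_K^k \mathds{1}_K = \mathds{1}_k \oplus \chi_0$, exactly as in the proof of Corollary \ref{coro:waldschmidt_bound}. Artin formalism immediately gives $\delta^\textbf{G}_K = \delta^\textbf{G}_k + \delta^\textbf{G}_k(\chi_0)$ and $f(\chi_0) = |S_p(K)| - |S_p(k)|$, so the first bound of (2) follows from the first bound of (1) applied to $\chi_0$. The main subtlety, and the only delicate point I foresee, is that the field cut out by $\chi_0$ is the Galois closure of $K/k$, which need not inherit the real or totally real behaviour of $K$ itself (as in $K = \Q(\sqrt[3]{2})$, whose Galois closure is totally complex). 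Rather than feeding $\chi_0$ into part (1) as a black box for the remaining two bounds, I would rerun the decomposition argument: the $G_\Q$-irreducible constituents $\rho_i$ of $\Ind_k^\Q \chi_0$ are subrepresentations of $\Ind_K^\Q \mathds{1}_K$, so applying Lemma \ref{lem:properties_invariants_d_d+_and_f}(3) to $\mathds{1}_K$ (whose field cut out is exactly $K$) yields $d^+(\rho_i) \geq 1$ when $K$ has a real place, and $d^+(\rho_i) = d(\rho_i)$ when $K$ is totally real. The termwise estimates from (1) then transfer verbatim, completing the proof.
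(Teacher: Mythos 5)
Your proof is correct and takes essentially the same route as the paper: Artin formalism reduces both parts to irreducible constituents of $\Ind_k^\Q\rho$ (resp.\ of $\Ind_K^\Q\mathds{1}_K$), where Lemma \ref{lem:properties_invariants_d_d+_and_f}(3) supplies $d^+(\theta)\geq 1$ or $d^+(\theta)=d(\theta)$ and Theorem \ref{thm:g_k_main} gives the termwise bounds. The only place you go beyond the paper --- which settles (2) with a one-line ``as in the proof of Corollary \ref{coro:waldschmidt_bound}'' --- is your observation that the field cut out by $\rho_0$ is the Galois closure of $K/k$, which may fail to have a real place even when $K$ does, so that the strict inequality in (2) requires applying Lemma \ref{lem:properties_invariants_d_d+_and_f}(3) to $\mathds{1}_K$ (whose cut-out field is exactly $K$) rather than quoting part (1) for $\rho_0$ verbatim; this is precisely the right way to make the paper's reduction rigorous.
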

	\begin{proof}
		We know that $\rho \mapsto \delta_k^\textbf{G}(\rho)$ is compatible with Artin formalism by Corollary \ref{coro:artin_formalism}. We now explain how to prove (1). Again by Artin formalism, it suffices to prove (1) with $\rho$ replaced by any irreducible subrepresentation $\theta \in \Art_{\ob{\bQ}_p}(G_\Q)$ of $\Ind_k^\Q \rho$. For such a $\theta$, Lemma \ref{lem:properties_invariants_d_d+_and_f} implies that $d^+(\theta)\geq 1$ if $K$ contains at least one real place, and $d^+(\theta)=d(\theta)$ if $K$ is totally real. Therefore, the inequalities in (1) for $\theta$ directly follow from Theorem \ref{thm:g_k_main}. Finally, (2) follows from (1) as in the proof of Corollary \ref{coro:waldschmidt_bound}.
	\end{proof}
	\begin{remark}\label{rem:schanuel_implique_leo_et_gk}
		The matrices $M$ and $M'$ appearing in the course of the proof of Theorems \ref{thm:leopoldt_main} and \ref{thm:g_k_main} have full rank under the $p$-adic Schanuel conjecture. Therefore, Artin formalism shows that Leopoldt's and Gross-Kuz'min's conjectures hold in great generality under the $p$-adic Schanuel conjecture. 
	\end{remark}
	
	\subsection{Applications}
	\begin{theorem}
		Let $k$ be a totally real number field and let $(V,\rho)\in \Art_{\ob{\Q}_p}(G_k)$ be such that $d^+(\rho)=0$. Then Gross's $p$-adic regulator matrix $R_p(V)$ \cite[(2.10)]{gross1981padic} is of size $f(\rho)$ and of rank at least $ f(\rho)/2$.
	\end{theorem}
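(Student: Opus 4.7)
The plan is to identify Gross's regulator matrix $R_p(V)$ with the $\rho$-isotypic component $\sL_k(\rho)$ of the global regulator map (up to a choice of bases) and then invoke the bound of Corollary \ref{coro:gk_bound}.

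First, I would verify that the natural source and target of $\sL_k(\rho)$ both have dimension $f(\rho)$, so that $\sL_k(\rho)$ is represented by a square matrix of size $f(\rho)$. By the description of $\sL_k(\rho)$ given in Proposition \ref{prop:identification_sL_rho_composante} (the case $\mathds{1}\not\subset\rho$ applies, since $V$ is totally odd and hence cannot contain the trivial representation by Lemma \ref{lem:properties_invariants_d_d+_and_f}(3)), the codomain is $\bigoplus_{\gp\in S_p(k)}\Hom(W_\gp^0,\Gamma_\gp^\wedge)$, whose dimension equals $\sum_{\gp|p}\dim W_\gp^0=f(\rho)$. On the other hand, the source $\Hom_{G_k}(W,\cO_K[\tfrac{1}{p}]^{\times,\wedge})$ has dimension $d^+(\rho)+f(\rho)=f(\rho)$ by Lemma \ref{lem:properties_invariants_d_d+_and_f}(2), using the hypothesis $d^+(\rho)=0$. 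Unwinding Gross's definition \cite[(2.10)]{gross1981padic}, which is built from $p$-adic logarithms of norms of $V$-isotypic $p$-units evaluated against a basis of $V^{G_{k_\gp}}$ at each $\gp|p$, one sees that $R_p(V)$ is, up to a sign and a choice of bases, the matrix of $\sL_k(\rho)$ after identifying $\Gamma_\gp^{\wedge}$ with $\ob{\Q}_p$ via $\log_p\circ\chi_\cyc$ as in the proof of Corollary \ref{coro:formule_gross_defect_over_Q}(2). This establishes the assertion that $R_p(V)$ has size $f(\rho)$.

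Next, since $\sL_k(\rho)$ is a linear map between two $\ob{\Q}_p$-vector spaces of the same dimension $f(\rho)$, its rank equals $f(\rho)-\dim\coker(\sL_k(\rho))=f(\rho)-\delta_k^{\textbf{G}}(\rho)$ by the definition of Gross's defect. It therefore suffices to prove the upper bound $\delta_k^{\textbf{G}}(\rho)\leq f(\rho)/2$. This is exactly the content of the first inequality of Corollary \ref{coro:gk_bound}(1) applied to $\rho$, which asserts $\delta_k^{\textbf{G}}(\rho)\leq f(\rho)/2$ for any $\rho\in\Art_{\ob{\Q}_p}(G_k)$. Combining this with the rank formula yields
\[
\rk R_p(V)\;=\;f(\rho)-\delta_k^{\textbf{G}}(\rho)\;\geq\;f(\rho)-\tfrac{f(\rho)}{2}\;=\;\tfrac{f(\rho)}{2},
\]
as required.

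The main subtlety — really the only non-bookkeeping step — is the careful matching of $R_p(V)$ with $\sL_k(\rho)$; once this dictionary is in place, the conclusion follows purely formally from our bound on the Gross defect, with no further use of transcendence theory beyond what already entered Corollary \ref{coro:gk_bound}. I would also note, as a remark, that the stronger inequality $\delta_k^{\textbf{G}}(\rho)\leq f(\rho)/3$ of Corollary \ref{coro:gk_bound}(1) does \emph{not} apply here, because the splitting field $K$ of $\rho$ need not be totally real (indeed, totally odd nontrivial $\rho$ forces $K$ to be a CM field over the totally real $k$ by Lemma \ref{lem:properties_invariants_d_d+_and_f}(3)); this is why the statement only gives rank at least half of the size.
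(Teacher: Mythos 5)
Your proposal is correct and follows essentially the same route as the paper: identify Gross's matrix $R_p(V)$ with (the matrix of) an isotypic component of the regulator map $\sL_K$, note that source and target both have dimension $f(\rho)$ because $d^+(\rho)=0$, and conclude via $\rk R_p(V)=f(\rho)-\delta_k^{\textbf{G}}(\rho)\geq f(\rho)/2$ from the defect bound of Theorem \ref{thm:g_k_main}/Corollary \ref{coro:gk_bound}. The only (harmless) discrepancy is that the paper identifies $R_p(V)$ with the $\rho^*$-isotypic component of $\sL_K$ (since forming $(V\otimes -)^{G_k}$ picks out the contragredient), whereas you use $\rho$; this does not affect the conclusion because $f(\rho^*)=f(\rho)$ and $d^+(\rho^*)=d^+(\rho)$.
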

	\begin{proof}
		Let $K$ be the CM field cut out by $\rho$ and let $(\Hom_{\ob{\Q}_p}(V,\ob{\Q}_p),\rho^*)$ be the contragredient representation of $\rho$. Gross's regulator map $\lambda_p$ defined in \cite[(1.18)]{gross1981padic} can be identified with the ``minus part'' of $\sL_K$, which is, by definition, the restriction of $\sL_K$ to the subspace where the complex conjugation acts by $-1$. This means that $\lambda_p$ and $\sL_K$ share the same $\theta$-isotypic component for every representation $\theta\in \Art_{\ob{\bQ}_p}(\Gal(K/k))$ such that $d^+(\theta)=0$. Since taking $(V \otimes -)^{G_k}$ amounts to taking $\rho^*$-isotypic components, we conclude that $\rk R_p(V)=\rk \sL_k(\rho^*)=f(\rho^*)-\delta^\textbf{G}_k(\rho^*)$, so $\rk R_p(V)\geq f(\rho^*)/2=f(\rho)/2$ by Theorem \ref{thm:g_k_main}.
	\end{proof}
	
	In the next theorem, we write $k^+$ for the maximal totally real subfield of a number field $k$, and $\Q^\ab$ for the maximal abelian extension of $\Q$.
	\begin{theorem}\label{thm:list_new_cases_gk}
		Let $K/k$ be an abelian extension of number fields. The Gross-Kuz'min conjecture holds for $K$ in each of the following cases.
		\begin{enumerate}[label=(\alph*)]
			\item Either $|S_p(K)|\leq 2$, or $|S_p(K)|\leq 3$ and $K$ has at least one real place, or $|S_p(K)|\leq 4$ and $K/\Q$ is Galois, or $|S_p(K)|\leq 6$ and $K/\Q$ is a real Galois extension.
			\item $|S_p(k)|=1$, or $|S_p(k)|\leq 2$ and $K$ has at least one real place.
			\item $K\subset k\cdot \Q^\ab$, $k/\Q$ is Galois, and either $|S_p(k)|\leq 3$, or $|S_p(k)|\leq 5$ and $K$ is real.
			\item $k$ is an imaginary quadratic field, or $k$ is a real quadratic field and $K$ has at least one real place.
			\item $k/\Q$ is Galois, $|S_p(k)|\leq 2$, $|S_p(k^+)|=1$ and $[K:k]$ and $[k:\Q]$ are coprime.
		\end{enumerate}
	\end{theorem}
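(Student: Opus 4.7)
The plan is to prove $\delta^\textbf{G}_K = 0$ in each sub-case via the decomposition $\delta^\textbf{G}_K = \sum_\rho m_\rho\, \delta^\textbf{G}_\Q(\rho)$ given by Corollary \ref{coro:artin_formalism}, where $\rho$ ranges over irreducible $\rho \in \Art_{\ob{\Q}_p}(G_\Q)$ with multiplicity $m_\rho$ in $\Ind^\Q_K \mathds{1}_K$. Theorem \ref{thm:g_k_main} provides $\delta^\textbf{G}_\Q(\rho) \leq f(\rho)^2/(d^+(\rho) + 2f(\rho))$, which is strictly less than $1$ (and hence forces $\delta^\textbf{G}_\Q(\rho) = 0$) whenever $f(\rho) \leq 1$, or whenever $f(\rho) = 2$ and $d^+(\rho) \geq 1$. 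I will repeatedly exploit the identity $\sum_\rho m_\rho f(\rho) = |S_p(K)|$ (with $m_{\mathds{1}}\, f(\mathds{1}) = 1$), the implication from Lemma \ref{lem:properties_invariants_d_d+_and_f}(3) that if $K$ has a real place then every constituent $\rho$ of $\Ind^\Q_K \mathds{1}_K$ satisfies $d^+(\rho) \geq 1$, and the fact that if $K/\Q$ is Galois then $\Ind^\Q_K \mathds{1}_K$ is the regular representation (so $m_\rho = \dim \rho$).

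For case (a), the upper bound on $|S_p(K)|$ combined with $f(\rho) \leq \dim \rho$ leaves only finitely many patterns $(\dim \rho, f(\rho))$ for the non-trivial constituents; the borderline pattern $(\dim \rho, f(\rho)) = (2, 2)$ with $d^+(\rho) = 0$ is ruled out in each of the four sub-cases by either the numerical constraint (contributing $4$ or more to $\sum m_\rho f(\rho)$, which already exceeds the allowed budget of $|S_p(K)| - 1$ in the first three sub-cases) or by the real-place/totally-real hypothesis forcing $d^+(\rho) \geq 1$. Cases (b) and (d) are instead handled at the level of $k$, writing $\delta^\textbf{G}_K = \sum_\chi \delta^\textbf{G}_k(\chi)$ over characters of the abelian group $\Gal(K/k)$ and applying Corollary \ref{coro:gk_bound}(1); since $f(\chi) \leq |S_p(k)|$ and any subfield of $K$ inherits a real place from $K$, this yields the desired vanishing directly, except in the imaginary-quadratic sub-case of (d) with $f(\chi) = 2$. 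In this last sub-case, $\Ind^\Q_k \chi$ is either a sum of two Dirichlet characters (each with $f \leq 1$) or an irreducible two-dimensional representation with $d^+ = 1$ (as complex conjugation swaps the two primes of $k$ above $p$), and Theorem \ref{thm:g_k_main} gives $\delta \leq 4/5 < 1$.

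For case (c), the assumption $K \subset k \cdot \Q^{\ab}$ implies that each character $\chi$ of $\Gal(K/k)$ is the restriction to $G_k$ of some Dirichlet character $\Psi$, so the projection formula together with the Galois hypothesis on $k$ gives $\Ind^\Q_k \chi \simeq \Psi \otimes \Ind^\Q_k \mathds{1}_k = \bigoplus_\eta (\eta \otimes \Psi)^{\oplus \dim \eta}$ with $\eta$ running over irreducible representations of $\Gal(k/\Q)$; Mackey's formula yields $\sum_\eta (\dim \eta)\, f(\eta \otimes \Psi) \leq |S_p(k)|$, and the combinatorial enumeration proceeds exactly as in (a), the real-place hypothesis (when assumed) supplying $d^+(\eta \otimes \Psi) \geq 1$ when the $(2,2)$ pattern arises. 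Case (e) reduces to case (b) applied to $K/k^+$ in the trivial sub-case $|S_p(k)| = 1$; in the remaining sub-case $|S_p(k)| = 2$ (forcing $k$ CM with $p$ splitting in $k/k^+$), the coprimality $\gcd([K:k], [k:\Q]) = 1$ combined with Schur-Zassenhaus allows one to extend each $\sigma$-invariant character of $\Gal(K/k)$ to a character of $\Gal(K/k^+)$ (where $\sigma$ is the non-trivial element of $\Gal(k/k^+)$), reducing its contribution to case (b) at $k^+$, while the $\sigma$-orbits of size $2$ give irreducible two-dimensional representations of $G_{k^+}$ whose further induction to $\Q$ has invariants $(f, d^+) = (2, [k^+:\Q])$, and Theorem \ref{thm:g_k_main} yields $\delta \leq 4/([k^+:\Q] + 4) < 1$. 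The main obstacle will be the careful verification of case (e): one must track $d^+$ values through the successive inductions $\Gal(K/k) \to \Gal(K/k^+) \to G_\Q$ to ensure that no irreducible $\Q$-component of the induced representations lands in the critical regime $(f, d^+) = (2, 0)$.
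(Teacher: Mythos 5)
Your handling of cases (a)--(d) follows essentially the same route as the paper (Artin formalism, Theorem \ref{thm:g_k_main}, and Lemma \ref{lem:properties_invariants_d_d+_and_f}(3) to get $d^+\geq 1$ from a real place), and it goes through up to small fixable slips: in the first two sub-cases of (a) the extension $K/\Q$ is not assumed Galois, so the multiplicity of a constituent need not be its dimension and a $(\dim\rho,f(\rho))=(2,2)$ constituent contributes only $2$, not ``$4$ or more'', to $\sum m_\rho f(\rho)$ (this still exceeds the budget when $|S_p(K)|\leq 2$, and the real place covers $|S_p(K)|\leq 3$); in (d) the reason $d^+(\Ind_k^\Q\chi)=1$ is that the unique archimedean place of an imaginary quadratic field is complex (i.e.\ $d^+(\chi)=1$ plus Artin formalism), not the action of complex conjugation on the primes above $p$; and the sub-case $|S_p(k)|=1$ of (e) is case (b) applied to $K/k$, not to $K/k^+$, since $K/k^+$ need not be abelian.

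The genuine gap is in case (e) with $|S_p(k)|=2$. First, the hypotheses do \emph{not} force $k$ to be CM with $[k:k^+]=2$: take for $k$ the splitting field of a cubic with negative discriminant (an $S_3$-extension of $\Q$, totally imaginary but not CM, with $k^+=\Q$, so $|S_p(k^+)|=1$ automatically) and a prime $p$ with Frobenius a $3$-cycle, so $|S_p(k)|=2$; any abelian $K/k$ of degree prime to $6$ then satisfies (e), yet $\Gal(k/k^+)$ has order $6$ and there is no involution $\sigma$ with which to run your orbit argument. Second, even in the genuinely CM situation, your treatment of the size-two orbits applies Theorem \ref{thm:g_k_main} to $\Ind_k^\Q\chi$, a representation of dimension $[k:\Q]$ which is in general reducible, whereas the theorem only bounds $\delta^\textbf{G}_\Q$ of \emph{irreducible} representations; knowing $(f,d^+)=(2,[k^+:\Q])$ for the total does not rule out an irreducible constituent with $(f,d^+)=(2,0)$ --- precisely the obstacle you flag as unresolved at the end. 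The paper closes both issues simultaneously: after replacing $K$ by its Galois closure over $\Q$ and writing $\cG=\Gal(K/\Q)=G\rtimes H$ by Schur--Zassenhaus, every irreducible $\rho$ of $\cG$ has the form $\Ind_{k'}^\Q(\theta\otimes\chi)$ for some intermediate field $\Q\subseteq k'\subseteq k$, $\theta$ irreducible for $\Gal(k/k')$ and $\chi$ a character (Serre, Ch.~II, \S 8.2); Lemma \ref{lem:properties_invariants_d_d+_and_f}(5) gives $f(\rho)\leq |S_p(k)|/\dim\theta\leq 2/\dim\theta$, so one may assume $\dim\theta=1$, and then the dichotomy is on $k'$: if $k'$ is totally real it lies in $k^+$ and $f(\rho)\leq |S_p(k')|\leq |S_p(k^+)|=1$, while if $k'$ has a complex place then $d^+(\rho)\geq 1$ and $f(\rho)\leq 2$, so Theorem \ref{thm:g_k_main} applies to the irreducible $\rho$ itself. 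Some such control of the irreducible constituents is indispensable to complete (e).
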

	\begin{proof}
		Recall that $\delta^\textbf{G}(-)$ is compatible with Artin formalism by Corollary \ref{coro:artin_formalism}. We shall often appeal to Lemma \ref{lem:properties_invariants_d_d+_and_f} and to the following consequence of Theorem \ref{thm:g_k_main} without further notice. For any irreducible representation $\theta \in \Art_{\ob{\bQ}_p}(G_\Q)$, we have $\delta_\Q^\textbf{G}(\theta)=0$ if $f(\theta)\leq 1$, or if $f(\theta)=2$ and $d^+(\theta)\geq 1$. In particular, $\delta_\Q^\textbf{G}(\theta)=0$ if $\theta$ is a multiplicative character of $G_\Q$, so $\delta_M^\textbf{G}=0$ for any abelian extension $M/\Q$ by Artin formalism. 
		
		Since $\delta_\Q^\textbf{G}=0$, it follows from Corollary \ref{coro:gk_bound} that $\delta_K^\textbf{G}=0$ for $K$ satisfying one of the two first assumptions in case (a). Consider the two last assumptions in (a) and assume that $K/\Q$ is Galois. We claim that $\delta_\Q^\textbf{G}(\theta)=0$ for all irreducible $\theta\in \Art_{\ob{\bQ}_p}(\Gal(K/\Q))$. We may assume that $\dim \theta\geq 2$, so $f(\theta)\leq (f(\mathds{1}_K)-f(\mathds{1}_\Q))/(\dim \theta)\leq (|S_p(K)|-1)/2$. The two last assumptions in (a) ensure that we either have $f(\theta)\leq 1$, or $f(\theta)\leq 2$ and $d^+(\theta)=d(\theta)\geq 1$, so we indeed have $\delta_\Q^\textbf{G}(\theta)=0$. Therefore, $\delta_K^\textbf{G}=0$ in case (a).
		
		Let $G=\Gal(K/k)$ and let $\hat{G}=\Hom(G,\ob{\Q}_p^\times)$ be the group of characters of $G$. We place ourselves in cases (b), (c) and (d), we fix ${\chi \in \hat{G}}$ and we show that $\delta_k^\textbf{G}(\chi)=0$. Since $f(\chi)\leq |S_p(k)|$, Corollary \ref{coro:gk_bound} (1) implies $\delta_k^\textbf{G}(\chi)=0$ in case (b). 
		Suppose now we are in case (c). Then $\chi$ descends to a character $\chi_\Q$ of $G_\Q$. Moreover, as $k/\Q$ is Galois, any irreducible subrepresentation $\rho$ of $\Ind_k^\Q \chi\simeq (\Ind_k^\Q \mathds{1}_k)\otimes\chi_\Q$ occurs $(\dim \rho)$ times, so it satisfies $f(\rho)\leq |S_p(k)|/(\dim \rho)$. Moreover, if $K$ is totally real, then any such $\rho$ satisfies $d^+(\rho)=d(\rho)\geq 1$, so we can conclude $\delta_\Q^\textbf{G}(\rho)=0$. Therefore, $\delta_k^\textbf{G}(\chi)=0$ in case (c).
		We now assume to be in case (d). Then $\Ind_k^\bQ \chi$ has dimension $2$, so it is either irreducible or it is the sum of two characters of $G_\Q$, say $\eta_1$ and $\eta_2$. In the latter case, we already know that $\delta_\bQ^\textbf{G}(\eta_i)=0$ for $i=1,2$, so $\delta_k^\textbf{G}(\chi)=\delta_\bQ^\textbf{G}(\Ind_k^\bQ \chi)=0$. If $\Ind_k^{\Q}\chi$ is irreducible, then the assumptions on $K$ and $k$ imply that $f(\chi)\leq2 $ and $d^+(\chi)\geq 1$, yielding $\delta_k^\textbf{G}(\chi)=0$. Therefore, $\delta_K^\textbf{G}=0$ in the cases (b), (c) and (d). 
		
		We now make the assumptions in (e) and we assume without loss of generality that $K$ is Galois over $\Q$ with Galois group $\cG$. By the Schur-Zassenhaus theorem, $\cG$ is the semi-direct product of $H:=\Gal(k/\Q)$ acting on $G$. Let $\rho\in \Art_{\ob{\bQ}_p}(\cG)$ be irreducible and let us prove that $\delta_\Q^\textbf{G}(\rho)=0$.
		By \cite[Chap II \S~8.2]{serrerepresentations}, $\rho$ can be written as $\Ind_{k'}^{\Q} (\theta\otimes\chi)$, where $k'/\Q$ is a subextension of $k/\Q$, $\theta$ an irreducible representation of $\Gal(k/k')$ and $\chi$ a character of $\Gal(K/k')$. Note that $f(\rho)\leq |S_p(k)|/(\dim\theta)\leq 2/(\dim\theta)$, so we may assume without loss of generality that $\dim \theta=1$.
		If $k'$ is totally real, then $f(\rho)\leq |S_p(k')|\leq|S_p(k^+)|=1$, and otherwise we have $d^+(\rho)\geq 1$. In any case, $\delta_\Q^\textbf{G}(\rho)=0$ so we may infer $\delta_K^\textbf{G}=0$ in case (e) as well.
	\end{proof}
	
	\section{Vanishing locus of Gross's defect}\label{sec:vanishing_locus}
	\subsection{Preliminaries}
	This section is devoted to the proof of the following theorem which, in turn, implies Theorem \ref{THM:non-cyclotomic_gk} stated in the introduction. 
	
	\begin{theorem}\label{thm:non_cyclotomic_defects}
		Let $k$ be a number field and let $\varphi \colon G_k \to \ob{\bQ}_p^\times$ be a finite-order character. Assume also that $p$ completely splits in the number field cut out by $\varphi$.
		\begin{enumerate}
			\item Assume that $r_2+\delta^\textbf{L}_k\leq 1$, where $r_2$ is the number of complex places of $k$. If there exists at least one $\Zp$-extension $k_\infty$ of $k$ such that $\delta_{k_\infty/k}^\textbf{G}(\varphi)=0$, then there are at most $[k:\Q]$ (and at most $[k:\Q]-1$ if $\varphi=\mathds{1}$) $\Zp$-extensions $k_\infty/k$ such that $\delta_{k_\infty/k}^\textbf{G}(\varphi)\neq 0$.
			\item Assume that $k$ is an imaginary quadratic field. There is at most one $\Zp$-extension $k_\infty/k$ for which $\delta_{k_\infty/k}^\textbf{G}(\varphi)\neq 0$, and it has a transcendental slope (see Example \ref{ex:slope} for a definition of the slope of $k_\infty/k$). Moreover, if $\varphi$ cuts out an abelian extension of $\Q$ or if the polynomial 
			$XYZ^2-(AX-BY)(CX-DY)$
			does not vanish on any $7$-tuple $(a,b,c,d,x,y,z)\in\Lambda^7$ which form a $\ob{\bQ}$-linearly independent set, then $\delta^\textbf{G}_{k_\infty/k}(\varphi)=0$ for any $\Zp$-extension $k_\infty$ of $k$. 
		\end{enumerate}
	\end{theorem}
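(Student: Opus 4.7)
My plan is to parametrize the set of $\Zp$-extensions of $k$ as a projective space, express the non-vanishing of Gross's defect as the vanishing of a polynomial with coefficients in $\Lambda$, and invoke the transcendence results of Section~\ref{sec:transcendence} to bound this vanishing locus.

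By the remark following Proposition~\ref{prop:premier_calcul_rang}, the set of $\Zp$-extensions of $k$ is in bijection with $\bP^{r-1}(\Qp)$, where $r:=1+r_2+\delta^{\textbf{L}}_k$. Since $p$ splits completely in $k$, each local space $\HH^1(k_\gp,\ob{\Q}_p)=\Hom(\Qp^\times,\ob{\Q}_p)$ is two-dimensional, spanned by the valuation $v_p$ and the $p$-adic logarithm $\log_p$; so a $\Zp$-extension corresponds (up to scalar) to $\chi\in\HH^1_{\{p\}}(k,\ob{\Q}_p)$ with local components $\chi_\gp=a_\gp v_p+c_\gp\log_p$, the pairs $(a_\gp,c_\gp)_\gp$ being tied by the global reciprocity law. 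Let $K$ be the splitting field of $\varphi$ and $G=\Gal(K/k)$. Fix a $\ob{\Q}$-structure on $W=\ob{\Q}_p\cdot\varphi$ and a basis $\Psi_1,\ldots,\Psi_m$ of $\Hom_G(W,\cO_K[\tfrac{1}{p}]^{\times,\wedge})$ coming from $G$-equivariant maps over $\ob{\Q}$, with $m=d^+(\varphi)+f(\varphi)$. By Proposition~\ref{prop:identification_sL_rho_composante} combined with the identification $W_\gp^0=W$ (valid because $p$ splits completely in $K$), and upon post-composing with $\chi$ to send $\Gamma_\gp^\wedge\hookrightarrow\Gamma^\wedge\simeq\ob{\Q}_p$, the map $\sL_{k_\infty/k}(\varphi)$ is represented by an $n\times m$ matrix $M(\chi)$ whose $(j,i)$-entry equals
\[
M(\chi)_{j,i}\ =\ a_{\gp_j}\cdot v_p\bigl((\Psi_i(w))_{\gp_j}\bigr)\ +\ c_{\gp_j}\cdot\log_p\bigl((\Psi_i(w))_{\gp_j}\bigr),
\]
a linear form in $(a_{\gp_j},c_{\gp_j})$ with coefficients in $\Lambda$. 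The locus $\delta^{\textbf{G}}_{k_\infty/k}(\varphi)>0$ is therefore cut out of $\bP^{r-1}(\Qp)$ by the simultaneous vanishing of the maximal minors of $M(\chi)$, which are homogeneous polynomials of degree $\leq\min(n,m)\leq[k:\Q]$ in the projective coordinates, with coefficients in $\Lambda$. When $\varphi=\mathds{1}$, the image of $\sL_{k_\infty/k}$ already lies in the codimension-one subspace $\cH_{k_\infty/k}$, which shaves the degree bound down to $[k:\Q]-1$.

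For Part (1), the hypothesis $r_2+\delta^{\textbf{L}}_k\leq 1$ forces $\dim\bP^{r-1}\leq 1$; existence of at least one good extension ensures that the minor system is not identically zero, so the bad locus lies in the zero set of nontrivial polynomials of degree $\leq[k:\Q]$ (resp.\ $\leq[k:\Q]-1$ if $\varphi=\mathds{1}$) on a projective variety of dimension $\leq 1$, bounding the bad locus accordingly. For Part (2), $k$ imaginary quadratic gives $\delta^{\textbf{L}}_k=0$ by Ax-Brumer and $r=2$; the cyclotomic $\Zp$-extension is always good by Corollary~\ref{CORO:gk_dihedral}, so Part (1) already yields at most $[k:\Q]=2$ bad extensions. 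To sharpen this to ``at most one'', I would study the pencil $\chi\mapsto M(\chi)$ of $2\times m$ matrices via Kronecker's classification of matrix pencils, and exploit the presence of the distinguished good point $\chi_{\cyc}$ to reduce the common vanishing of the $2\times 2$-minors to a single projective point. The slope of the (possibly non-existing) bad extension is then the unique root of the degree-one factor of the defining polynomial, expressible as a ratio of two elements of $\Lambda$. An algebraic such ratio would force, via the Baker-Brumer theorem, a $\bQ$-linear relation among the logarithms of multiplicatively independent algebraic numbers appearing as coefficients; when $\varphi$ cuts out an abelian extension of $\Q$, I would rule out such a relation using the explicit cyclotomic description of units in $K$ together with Artin formalism (Corollary~\ref{coro:artin_formalism}), concluding that no bad extension exists. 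Otherwise, any bad extension has transcendental slope; and under the $p$-adic Schanuel conjecture, all nontrivial algebraic dependence among $\bQ$-linearly independent logarithms is forbidden, so again no bad extension can exist.

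The main obstacle is the sharpening in Part (2): the generic rank-drop analysis for a $2\times 3$ pencil allows up to three common zeros of its $2\times 2$-minors in $\bP^1$, and extracting the ``at most one'' bound requires exploiting the fine structure of the pencil, notably the presence of a preferred cyclotomic direction. A second subtle point is identifying the precise logarithms entering the defect polynomial and certifying their $\bQ$-linear independence in the abelian-over-$\Q$ setting, which reduces to careful bookkeeping of units and $p$-units of $K$ together with an invocation of Baker-Brumer.
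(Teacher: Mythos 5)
Your treatment of Part (1) is essentially the paper's own strategy: parameterize $\sZ(k)$ inside a projective space of dimension $r_2+\delta^\textbf{L}_k\leq 1$, observe that the defect condition is cut out by minors of a matrix whose entries are linear in the projective coordinates with coefficients in $\Lambda$, and bound the bad locus by the degree. The paper works with the dual matrix $N_\varphi(S)$ of Proposition \ref{prop:prop_clef_non_cyc} (coming from the kernel of the localization map, with the unit relations $M_\varphi$ appearing as rows independent of $S$), whereas you take the matrix of $\sL_{k_\infty/k}(\varphi)$ itself; that is a legitimate variant, but note one caveat: at a $\Zp$-extension in which some $p$-adic prime splits completely, the corresponding row of your $M(\chi)$ vanishes identically while contributing nothing to the cokernel, so ``defect $>0$'' only \emph{implies} the vanishing of all maximal minors, and the existence of one good extension does not by itself guarantee that the minor system is not identically zero (for Part (2) this is repaired by using the cyclotomic point, where no $p$-adic prime splits completely, but in Part (1) it needs to be addressed).

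The genuine gap is Part (2), where the theorem's actual content (at most one bad $\Zp$-extension, transcendence of its slope, and unconditional emptiness when $\varphi^\tau=\varphi$) is only sketched as a plan, and the plan as stated would not go through. First, in your formulation all maximal minors are quadratic in the slope, and a ``Kronecker pencil'' analysis with a distinguished cyclotomic direction is not supplied and would not by itself give ``at most one''; the paper gets this from the structure of $N_\varphi(S)$: the row $M_\varphi$ does not involve $S$, so the minor obtained by deleting the second row is \emph{linear} in $(s_1,s_2)$ with nonvanishing coefficient $L_{1,2}\cdot M_{1,2,\varphi}$, hence determines at most one slope (Proposition \ref{prop:non_cyc_non_trivial_char}). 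Second, your transcendence argument invokes only Baker--Brumer, but an algebraic slope produces bilinear (quadratic) relations among logarithms, to which Baker--Brumer does not apply; the paper instead substitutes an algebraic $S$ into the $3\times 2$ matrix $N_\varphi(S)$, verifies that its rows and columns are $\ob{\Q}$-linearly independent (using the $\tau$-symmetry $\tau(\gP_1)=\gP_2$, the choices $u_2=\tau(u_1)$, $\varepsilon_{1,\varphi^\tau}=\tau(\varepsilon_{1,\varphi})$, and Proposition \ref{prop:log_p_almost_injective} applied to elements of trivial $\gP_1$-valuation, so that $\log_p(p)=0$ causes no loss), and then applies Roy's strong six exponentials theorem (Corollary \ref{coro:strong_six_exponentials}) to force rank $2$ --- an ingredient entirely absent from your proposal. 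Third, the unconditional case where $\varphi$ cuts out an abelian extension of $\Q$ is not handled by ``cyclotomic descriptions of units'': it follows from $\varphi^\tau=\varphi$ forcing the two columns of $N_\varphi(S)$ to coincide when the rank drops, which yields $s_1=\pm s_2$ and a $\ob{\Q}$-linear relation among logarithms of multiplicatively independent $p$-units of trivial $\gP_1$-valuation, contradicting Proposition \ref{prop:log_p_almost_injective}; likewise the Schanuel case rests on an explicit polynomial identity among six independent logarithms extracted from the vanishing minors. Without these steps Part (2) is not established.
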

	\begin{proof}[Proof of Theorem \ref{THM:non-cyclotomic_gk}, assuming Theorem \ref{thm:non_cyclotomic_defects}]
		Let $K$ be an abelian extension of an imaginary quadratic field $k$ and let $K^\ab \subset K$ be its maximal absolutely abelian subfield. By Artin formalism (Corollary \ref{coro:artin_formalism}) and by Theorem \ref{thm:non_cyclotomic_defects}, we have $\delta^\textbf{G}_{K k_\infty/K}>0$ for a given $\Zp$-extension $k_\infty/k$ if and only if there exists a character $\varphi$ of $\Gal(K/k)$ such that $\delta_{k_\infty/k}^\textbf{G}(\varphi)> 0$. Such a character cannot be a character of $\Gal(K^\ab/k)$, and moreover, $k_\infty$ is uniquely determined by $\varphi$. Therefore, we have $\delta^\textbf{G}_{K k_\infty/K}>0$ for at most $[K:k]-[K^\ab:k]$ distinct $\Zp$-extensions of $k$.
	\end{proof}
	
	In the rest of Section \ref{sec:vanishing_locus}, we fix once and for all an abelian extension $K/k$ with Galois group $G$ such that $p$ totally splits in $K$. We let $n$ be the degree of $k$ and $(r_1,r_2)$ its signature, and we put $r=r_1+r_2-1-\delta^\textbf{L}_k$ so that the maximal multiple $\Zp$-extension of $k$ has rank $n-r$. Write $\gp_1,\ldots,\gp_n$ for the $p$-adic primes of $k$. Finally, denote by $\sZ(k)$ the set of all $\Zp$-extensions of $k$. 
	
	Instead of working with the map $\sL_{k_\infty/k}(\varphi)$, it will be more convenient to consider the following alternative description of $\delta_{k_\infty/k}^\textbf{G}(\varphi)$.
	
	\begin{lemme}\label{lem:expression_alternative_non-cyc_defect}
		Let $k_\infty/k$ be a $\Zp$-extension with Galois group $\Gamma$. The quantity $\delta_{k_\infty/k}^\textbf{G}(\varphi)$ is the dimension of the kernel of the $\varphi$-isotypic component of the map $\Loc_{K_\infty/K}$ of Proposition \ref{prop:premier_calcul_rang}.
	\end{lemme}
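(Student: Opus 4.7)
The plan is to refine the proofs of Propositions \ref{prop:premier_calcul_rang} and \ref{prop:generalisation_jaulent} so that they track the action of $G=\Gal(K/k)$, and then to read off the desired identity on $\varphi$-isotypic components. Those propositions supply the chain of identifications $\delta^\textbf{G}_{K_\infty/K}=\dim\ker\Loc_{K_\infty/K}=\dim\coker\sL_{K_\infty/K}$ via Kummer theory, Poitou--Tate duality, class field theory, and the inflation--restriction diagram \eqref{eq:diagramme_descente}; each of these ingredients is natural in the base, and should therefore be upgradable to a $G$-equivariant statement.

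The first step is to verify that $\Loc_{K_\infty/K}$ is $G$-equivariant. Its source $\HH^1_{\{p\}}(K,\ob{\Q}_p)/\HH^1(\Gamma,\ob{\Q}_p)$ and its target $\bigoplus_\gP \HH^1(K_\gP,\ob{\Q}_p)/\HH^1(\Gamma_\gP,\ob{\Q}_p)$ carry canonical $G$-actions (permuting the primes $\gP | p$ and their decomposition groups $\Gamma_\gP$), and the localization maps respect these actions because $K_\infty=Kk_\infty$ descends to $k$. The $G$-equivariance of $\sL_{K_\infty/K}$ was already recorded in Section \ref{sec:isotypic}. Since $G$ is finite abelian and $\ob{\Q}_p$ has characteristic zero, the algebra $\ob{\Q}_p[G]$ is semi-simple, so the $\varphi$-isotypic component functor is exact and commutes with the formation of kernels, cokernels, and images.

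The second step is to apply these observations to the $\ob{\Q}_p$-linear identification $\ker\Loc_{K_\infty/K}\simeq\coker\sL_{K_\infty/K}$ that emerges from the proof of Proposition \ref{prop:generalisation_jaulent}. Promoting it to an isomorphism of $\ob{\Q}_p[G]$-modules and passing to $\varphi$-isotypic components, then invoking the very definition of $\sL_{k_\infty/k}(\varphi)$ as the $\varphi$-isotypic component of $\sL_{K_\infty/K}$, yields
\[
\dim\ker\bigl(\Loc_{K_\infty/K}(\varphi)\bigr)
= \dim\coker\bigl(\sL_{K_\infty/K}(\varphi)\bigr)
= \dim\coker\bigl(\sL_{k_\infty/k}(\varphi)\bigr)
= \delta^\textbf{G}_{k_\infty/k}(\varphi),
\]
which is the claim.

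The main technical nuisance to dispatch is ensuring that the Poitou--Tate step genuinely respects the $G$-action in the direction needed. Tate's local pairing is $G$-invariant, but it identifies one space with the linear dual of the other, which a priori dualizes characters via $\varphi\leftrightarrow\varphi^{-1}$. Since the pairing is perfect on finite-dimensional $\ob{\Q}_p$-vector spaces, the dimensions of corresponding isotypic components are unaffected by this involution; the remaining task is simply to confirm that the character labels match the convention adopted in Section \ref{sec:isotypic} for the definition of $\sL_{k_\infty/k}(\varphi)$, so that no off-by-a-twist ambiguity survives in the dimensional equality.
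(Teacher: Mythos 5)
Your architecture is the paper's own: the published proof is precisely ``Poitou--Tate duality as in Proposition \ref{prop:generalisation_jaulent}'' with the $G$-action taken into account, and your variant (make the argument over $K$ equivariant, then apply the exact idempotent attached to $\varphi$) is an equivalent phrasing of that reduction. The problem is that the one step you label a ``technical nuisance'' is in fact the only non-formal content of the lemma, and your dismissal of it is not valid. The identification coming out of Proposition \ref{prop:generalisation_jaulent} is a duality, not a plain isomorphism: a $G$-invariant perfect pairing between $\ob{\Q}_p[G]$-modules $X$ and $Y$ puts $X[\varphi]$ in perfect duality with $Y[\varphi^{-1}]$, not with $Y[\varphi]$. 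So ``the pairing is perfect on finite-dimensional spaces'' only yields
\[
\dim\ker\bigl(\Loc_{K_\infty/K}(\varphi)\bigr)=\dim\coker\bigl(\sL_{K_\infty/K}(\varphi^{-1})\bigr)=\delta^\textbf{G}_{k_\infty/k}(\varphi^{-1}),
\]
and for a fixed character $\varphi$ there is no a priori reason that $\delta^\textbf{G}_{k_\infty/k}(\varphi)=\delta^\textbf{G}_{k_\infty/k}(\varphi^{-1})$: the two characters need not be conjugate under $\mathrm{Aut}(\ob{\Q}_p/\Qp)$, so one cannot appeal to rationality of $\sL_{K_\infty/K}$ either. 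Deferring ``the remaining task is simply to confirm that the character labels match'' therefore leaves the lemma unproved; that confirmation is exactly what has to be carried out.

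The clean way to nail the bookkeeping is the route the paper indicates: rerun the proof of Proposition \ref{prop:generalisation_jaulent} over $k$ with coefficients $\ob{\Q}_p(\varphi)$ in place of $\Qp$. Then Kummer theory identifies the unit side of the duality with $\Hom_G(W,\cO_K[\tfrac{1}{p}]^{\times,\wedge})$, i.e.\ with the source of $\sL_{k_\infty/k}(\varphi)$ exactly as in the remark following Proposition \ref{prop:identification_sL_rho_composante}, while Shapiro's lemma (inflation--restriction for $\Gal(K/k)$) translates $\HH^1_{\{p\}}(k,\ob{\Q}_p(\varphi))$ and its local analogues into the corresponding isotypic pieces of the source and target of $\Loc_{K_\infty/K}$ from Proposition \ref{prop:premier_calcul_rang}. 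With the twist built into the coefficients from the start, the duality argument goes through verbatim and the character normalization is fixed once and for all, instead of being checked after the fact. (For the counting statements of Section \ref{sec:vanishing_locus} the distinction between $\varphi$ and $\varphi^{-1}$ is ultimately harmless, since they quantify over all characters of $G$; but the lemma as stated concerns a single $\varphi$, so the verification cannot be skipped.)
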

	\begin{proof}
		This follows from Poitou-Tate duality as in the proof of Proposition \ref{prop:generalisation_jaulent} where one replaces the $G_K$-module $\Qp$ by the module $\ob{\Q}_p(\varphi)$ on which $G_k$ acts by $\varphi$.
	\end{proof}
	
	Since $\varphi$ is a multiplicative character, the $\varphi$-isotypic component of a $\ob{\Q}_p[G]$-module $X$ is canonically isomorphic to the linear subspace $X[\varphi]$ of $X$ consisting of elements $x\in X$ such that $g\cdot x=\varphi(g)x$ for all $g\in G$. Hence Lemma \ref{lem:expression_alternative_non-cyc_defect} asserts that, if $\varphi\neq \mathds{1}_k$, then $\delta_{k_\infty/k}^\textbf{G}(\varphi)$ is equal to the dimension of the kernel of the localization map
	\begin{equation}\label{eq:localization_phi_component}
		\Hom(G_K,\ob{\bQ}_p)[\varphi] \longrightarrow \bigoplus_{i=1}^n \left( \bigoplus_{\gP|\gp_i} \HH^1(K_{\gP},\ob{\bQ}_p)/\Hom(\Gamma_{\gP},\ob{\bQ}_p)\right)[\varphi].
	\end{equation}
	
	\subsection{Matrices in logarithms of algebraic numbers}
	For any $\gP\in S_p(K)$, we identify $\HH^1(K_\gP,\ob{\Q}_p)$ with $\Hom(K_{\gP}^\times,\ob{\Q}_p)\simeq \Hom(\Q_p^\times,\ob{\Q}_p)$ via local class field theory. We also see $\log_p$ and the $p$-adic valuation map $\ord_p$ as additive characters $K_\gP^\times\simeq \Q_p^\times \to \ob{\Q}_p$. In order to describe elements in the domain of the map (\ref{eq:localization_phi_component}) we make use of the short exact sequence of $\ob{\Q}_p[G]$-modules
	\begin{equation} \label{eq:sinnott_ses}
		\begin{tikzcd}
			0 \ar[r] & \Hom(G_K,\ob{\Q}_p) \ar[r, "A"] &  \bigoplus_{i=1}^n  \Hom(\prod_{\gP|\gp_i} K^\times_{\gP},\ob{\bQ}_p) \ar[r] & \Hom(\cO_K[\tfrac{1}{p}]^\times,\ob{\Q}_p),
		\end{tikzcd}
	\end{equation}
	where $A$ is induced by the Artin map.
	
	Let $1\leq i \leq n$ and fix a prime $\gP_i$ of $K$ above $\gp_i$. We define a basis $\{\eta_{i,\varphi},\tilde{\eta}_{i,\varphi}\}$ of the $\varphi$-component of $\Hom(\prod_{\gP|\gp_i} K^\times_{\gP},\Qp)$ as follows. First define characters $\eta_i$ and $\tilde{\eta}_i$ of $\prod_{\gP|\gp_i} K^\times_{\gP}$ by imposing that they are supported on $K_{\gP_i}^\times$ and that $\eta_{i|K_{\gP_i}^\times}=-\log_p$ and $\tilde{\eta}_{i|K_{\gP_i}^\times}=\ord_p$. We then define 
	\[\eta_{i,\varphi} = \sum_{\sigma \in G} \varphi(\sigma) \cdot \eta_i \circ \sigma, \qquad \tilde{\eta}_{i,\varphi} = \sum_{\sigma \in G} \varphi(\sigma) \cdot \tilde{\eta}_i \circ \sigma. \]
	
	Let $u_i$ be any $\gP_i$-unit of $K$ which is not a unit (take for example a generator of $\gP_i^h$, where $h$ is the class number of $K$). The choice of $u_i$ with a given $\gP_i$-valuation is unique, up to multiplication by a unit of $K$. Consider 
	$$u_{i,\varphi}= \prod_{\sigma\in G} \varphi(\sigma) \otimes \sigma^{-1}(u_i) \in \ob{\bQ} \otimes K^\times.$$
	It is clear that $u_{i,\varphi}$ is a unit away from the primes above $\gp_i$, and $u_{1,\varphi},\ldots,u_{n,\varphi}$ form a basis of $(\ob{\Q} \otimes \cO_K[1/\gp_i]^\times)[\varphi]$ modulo $(\ob{\Q} \otimes \cO_K^\times)[\varphi]$. We also fix a basis $\{\varepsilon_{1,\varphi},\ldots,\varepsilon_{r(\varphi),\varphi}\}$ of $(\ob{\Q} \otimes \cO_K^\times)[\varphi]$ modulo the kernel of Leopoldt's map $\iota_k(\varphi)$ of (\ref{eq:leopoldts_map_rho}), where $r(\varphi)=d^+(\varphi)-\delta^\textbf{L}_k(\varphi)$. For all $j=1,\ldots,n$, one can see via $\iota_{\gP_j} \colon K \hookrightarrow K_{\gP_j}=\Q_p$ the elements $u_{i,\varphi}$ and $\varepsilon_{i,\varphi}$ inside $\ob{\Q} \otimes \Q_p^\times$. We then define two matrices $L_\varphi=(L_{i,j,\varphi})$ and $M_\varphi=(M_{i,j,\varphi})$ of respective sizes $n\times n$ and $r(\varphi) \times n$ by letting 
\begin{equation}\label{eq:def_matrices_logs}
		L_{i,j,\varphi} = \dfrac{\log_p(\iota_{\gP_j}(u_{i,\varphi}))}{\ord_p(\iota_{\gP_i}(u_i))}, \qquad M_{i,j,\varphi} = {\log_p(\iota_{\gP_j}(\varepsilon_{i,\varphi}))},
\end{equation}
	where we extended $\log_p$ to $\ob{\Q} \otimes \Q_p^\times$ by linearity. Notice that $M_\varphi$ has full rank by construction. 
	
	Let $\eta'$ be an element in the $\varphi$-component of $\bigoplus_{i=1}^n  \Hom(\prod_{\gP|\gp_i} K^\times_{\gP},\ob{\bQ}_p)$, which we write as $\sum t_i\eta_{i,\varphi} + \tilde{t}_i \tilde{\eta}_{i,\varphi}$ in the basis $\{\eta_{i,\varphi}, \tilde{\eta}_{i,\varphi} \ \colon\ 1\leq i \leq n\}$. Denote by $T$ and $\tilde{T}$ the column matrices of respective coordinates $(t_1,\ldots,t_n)$ and $(\tilde{t}_1,\ldots,\tilde{t}_n)$.
	
	\begin{lemme}\label{lem:T_and_tilde_T}
		$\eta'$ belongs to the image of the map $A$ of (\ref{eq:sinnott_ses}) if and only if $\tilde{T}=L_\varphi T$ and $M_\varphi T=0$.
	\end{lemme}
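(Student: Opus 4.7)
The plan is to unwind the short exact sequence (\ref{eq:sinnott_ses}) and carry out the pairing computations explicitly. By exactness, $\eta'$ lies in the image of $A$ if and only if the $\ob{\Q}_p$-linear extension of $\eta'$ annihilates the diagonal image of $\cO_K[\tfrac{1}{p}]^\times$ in $\prod_\gP K_\gP^\times$. Since $\eta'$ is $\varphi$-isotypic, it suffices to check the vanishing on the corresponding isotypic component of $\ob{\Q}_p\otimes\cO_K[\tfrac{1}{p}]^\times$, which is spanned by $u_{1,\varphi},\dots,u_{n,\varphi}$, by $\varepsilon_{1,\varphi},\dots,\varepsilon_{r(\varphi),\varphi}$, and by the subspace $\ker\iota_k(\varphi)$.

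The last summand is killed by $\eta'$ automatically: its elements are units whose local $\log_p$ vanishes at every $p$-adic place, so the $\log_p$-type basis vectors $\eta_{i,\varphi}$ annihilate them, while the $\ord_p$-type basis vectors $\tilde\eta_{i,\varphi}$ vanish on any global unit. Hence $\eta'\in\mathrm{Image}(A)$ reduces to the two families of conditions $\eta'(u_{j,\varphi})=0$ for $1\leq j\leq n$ and $\eta'(\varepsilon_{j,\varphi})=0$ for $1\leq j\leq r(\varphi)$.

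The first family yields $\tilde T = L_\varphi T$. The key observation, which crucially uses that $p$ splits completely in $K$, is that $u_j$ is a $\gP_j$-unit and the decomposition group at each $\gP_i$ is trivial; so the only Galois translate $\nu(u_j)$ with nonzero valuation at $\gP_i$ corresponds to $i=j$ and $\nu=\mathrm{id}$. Consequently $\tilde\eta_{i,\varphi}(u_{j,\varphi})$ is diagonal with entries proportional to $\ord_p(\iota_{\gP_j}(u_j))$, while a direct expansion of $\eta_{i,\varphi}(u_{j,\varphi})$ recovers, with the same proportionality constant, the numerator $\log_p(\iota_{\gP_j}(u_{i,\varphi}))$ appearing in $L_{i,j,\varphi}$. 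Dividing out the $\ord_p$-factor is precisely the definition of $L_\varphi$, so the system collapses to $\tilde T = L_\varphi T$. The second family is easier: since $\varepsilon_j$ is a unit, $\ord_p$ vanishes on all of its Galois translates, erasing the $\tilde\eta_{i,\varphi}$-contributions, and the surviving $\eta_{i,\varphi}(\varepsilon_{j,\varphi})$ entries are, by definition, the entries of $M_\varphi$. The resulting condition is $M_\varphi T = 0$.

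The main delicacy I expect to face is tracking the Galois-equivariant conventions throughout the pairing: one must align the $\varphi$-twists on the character side with those on the unit side so that the cross-terms in the double sum over $G$ survive rather than cancel. The key combinatorial input making this work is the total splitting of $p$ in $K$, which forces all decomposition groups above $p$ to be trivial, makes the $G$-action on each $\{\gP : \gP|\gp_i\}$ simply transitive, and thereby reduces the $\ord_p$-part of the pairing to the diagonal computation above. Modulo conventions, the lemma then follows from this two-step linear-algebra verification together with the automatic vanishing on $\ker\iota_k(\varphi)$.
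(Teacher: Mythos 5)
Your proposal is correct and follows essentially the same route as the paper: exactness of (\ref{eq:sinnott_ses}) reduces membership in $\mathrm{im}(A)$ to vanishing on the $p$-units, isotypic orthogonality (plus the automatic vanishing on $\ker\iota_k(\varphi)$) reduces this to the $u_{i,\varphi}$'s and $\varepsilon_{j,\varphi}$'s, and the pairing computation with trivial decomposition groups gives $\tilde{T}=L_\varphi T$ and $M_\varphi T=0$. The only blemish is a harmless transposition of indices in $\eta_{i,\varphi}(u_{j,\varphi})$ versus the numerator of $L_{i,j,\varphi}$, which is exactly the kind of convention issue you flagged.
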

	\begin{proof}
		By the exactness of (\ref{eq:sinnott_ses}) such an $\eta'$ is characterized by its vanishing at all the $u_{i}$'s and the $\varepsilon_{i,\varphi}$'s.
		The lemma then follows from a straightforward computation, using that $\eta_{j,\varphi}(\iota_{\gP_j}(u_i))=-\log_p(\iota_{\gP_j}(u_{i,\varphi}))$ and $\tilde{\eta}_{j,\varphi}(\iota_{\gP_j}(u_i))=\ord_p(\iota_{\gP_j}(u_{i}))$ for all $1\leq i,j\leq n$. 
	\end{proof}
	
	In what follows we repeatedly use the following elementary fact. For all compact topological groups $\cG$, any non-trivial continuous group homomorphism $\eta \colon \cG \to \Qp$ factors through a quotient $Z_\eta$ isomorphic to $\Zp$, and two such homomorphisms $\eta$ and $\eta'$ are proportional if and only if $Z_\eta=Z_{\eta'}$. Conversely, any topological group $Z$ isomorphic to $\Zp$ which arises as a quotient of $\cG$ defines a continuous homomorphism $\eta \colon \cG \to\Qp$, which is unique up to scaling.
	
	Fix $k_\infty \in \sZ(k)$ and put $\Gamma=\Gal(k_\infty/k)$. The above argument attaches to $\Gamma$ a non-zero element $\eta \in \Hom(G_k,\Qp)$, unique up to scaling. Since the restriction map induces an isomorphism $\Hom(G_k,\Qp) \simeq \Hom(G_K,\Qp)[\mathds{1}]$, one can write $A(\eta)$ as $\sum_i (s_i \eta_{i,\mathds{1}}+ \tilde{s}_i \tilde{\eta}_{i,\mathds{1}})$. We shall refer to the column matrices $S=(s_1,\ldots,s_n)^\textbf{t} \in \bP^{n-1}(\Qp)$ and $\tilde{S}=(\tilde{s}_1,\ldots,\tilde{s}_n)^\textbf{t}\in \bP^{n-1}(\Qp)$ as the coordinates of $k_\infty$. 
	
	\begin{proposition}\label{prop:prop_clef_non_cyc}
		\begin{enumerate}
			\item The map sending a $\Zp$-extension $k_\infty/k$ to its coordinates $S$ defines a bijection between $\sZ(k)$ and $\{S \in \bP^{n-1}(\Qp) \ \colon \ M_{\mathds{1}}S=0\}$.
			\item Let $k_\infty \in \sZ(k)$ with coordinates $S\in \ker(M_\mathds{1})$ and consider the matrix of size $(n+r(\varphi))\times n$ given in block notation by:
			\begin{equation}\label{eq:def_N_phi_S}
				\begin{bmatrix}
				N_\varphi(S)=\Diag(S)L_\varphi - \Diag(L_\mathds{1}S)\\
				M_\varphi
			\end{bmatrix},
			\end{equation}
			where $\Diag(U)$ denotes the diagonal matrix associated with a column matrix $U$.
			Then $$\delta_{k_\infty/k}^\textbf{G}(\varphi) = \dim \ker N_\varphi(S) - \gamma,$$ 
			where $\gamma=1$ if $\varphi=\mathds{1}$ and $\gamma=0$ otherwise. 
		\end{enumerate}
	\end{proposition}
	\begin{proof}
		The first point follows from Lemma \ref{lem:T_and_tilde_T} applied to $\varphi=\mathds{1}$. More precisely, $S$ and $\tilde{S}$ are uniquely determined by the relations $\tilde{S}=L_{\mathds{1}}S$ and $M_{\mathds{1}}S=0$. Conversely, any $S,\tilde{S} \in \bP^{n-1}(\Qp)$ satisfying these relations define a non-zero element $\eta \in \Hom(G_k,\Qp)$ which cuts out the Galois group of a $\Zp$-extension $k_\infty/k$.
		
		Let us prove the second claim. Fix $k_\infty \in \sZ(k)$ and denote by $\eta\in \Hom(G_K,\Qp)[\mathds{1}]$ the corresponding continuous homomorphism. By Lemma \ref{lem:expression_alternative_non-cyc_defect} and by (\ref{eq:localization_phi_component}), $\delta_{k_\infty/k}^\textbf{G}(\varphi)+\gamma$ is the dimension of the space consisting of all elements $\eta'\in \bigoplus_{i=1}^n \left( \Hom(\prod_{\gP|\gp_i} K^\times_{\gP},\ob{\bQ}_p)\right)[\varphi]$ satisfying the conditions of Lemma \ref{lem:T_and_tilde_T} and which are proportional to $A(\eta)$. This last condition means that for all $1\leq i \leq n$, the restriction to $K_{\gP_i}^\times$ of $\eta'$ and $A(\eta)$ are proportional, \textit{i.e.}, 
		$$\eta'(\varpi_{i})\cdot A(\eta)_{|\cO^\times_{K_{\gP_i}}} = A(\eta)(\varpi_{i})\cdot \eta'_{|\cO^\times_{K_{\gP_i}}},$$
		where $\varpi_{i}$ is a uniformizer of $K_{\gP_i}$.
		In terms of coordinates $S,\tilde{S}$ and $T,\tilde{T}$, this last equality is equivalent to $\tilde{t}_is_i=\tilde{s}_it_i$, an equality for all $i$ which can be rephrased as $\Diag(S)L_{\varphi}T=\Diag(L_\mathds{1}S)T$. Therefore, $\delta_{k_\infty/k}^\textbf{G}(\varphi)+\gamma$ is the dimension of the space of all $T\in \ob{\Q}_p^n$ in the kernel of both $\Diag(S)L_\varphi - \Diag(L_\mathds{1}S)$ and $M_\varphi$, as claimed.
	\end{proof}
	
	\begin{exemple}\label{ex:slope}
		If $k$ is an imaginary quadratic field, then $M_\mathds{1}$ is of size $0$ and Proposition \ref{prop:prop_clef_non_cyc} (1) provides a bijection between $\sZ(k)$ and $\bP^1(\Qp)$. The ratio ${s_1}/{s_2}\in \Qp \cup \{\infty\}$ attached to any $k_\infty\in \sZ(k)$ of coordinates $S=(s_1,s_2)\in \bP^1(\Qp)$ is referred to as the \emph{slope} of $k_\infty$. For instance, the cyclotomic extension of $k$ has slope $1$, whereas its anticyclotomic extension has slope $-1$.
	\end{exemple}
	
	\subsection{Proof of Theorem \ref{thm:non_cyclotomic_defects}}
	We keep the notations of the preceding sections and, in particular,  \eqref{eq:def_matrices_logs} and \eqref{eq:def_N_phi_S}. We abbreviate $L_\mathds{1}, M_\mathds{1}$ and $N_\mathds{1}(S)$ as $L,M$ and $N(S)$ respectively. By Proposition \ref{prop:prop_clef_non_cyc} (1), the set $\sZ(k)$ can be identified with a closed linear subvariety of $\bP^{n-1}(\Qp)$ of dimension $n-r-1=r_2+\delta^\textbf{L}_k$.  
	
	\begin{proposition}\label{prop:non_cyc_trivial_char}
		Assume that $n-r\leq 2$. \begin{enumerate}
			\item  Let $\sC(k)=\{k_\infty\in\sZ(k) \ \colon\ \delta_{k_\infty/k}^\textbf{G}\neq 0\}$. If $\sC(k)\neq \sZ(k)$, then $\sC(k)$ is finite with $|\sC(k)|\leq n-1$. If, moreover, $k$ is imaginary quadratic, then $\sC(k)=\emptyset$.
			\item Let  $\sC_\varphi(k)=\{k_\infty\in\sZ(k) \ \colon\ \delta_{k_\infty/k}^\textbf{G}(\varphi)\neq 0\}$. If $\sC_\varphi(k)\neq \sZ(k)$, then $\sC_\varphi(k)$ is finite with $|\sC(k)|\leq n$.
		\end{enumerate}
	\end{proposition}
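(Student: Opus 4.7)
The plan is to invoke Proposition \ref{prop:prop_clef_non_cyc}, which expresses both $\sC(k)$ and $\sC_\varphi(k)$ as vanishing loci of minors of $N_\varphi(S)$. These minors are homogeneous polynomials of bounded degree in $S$, and under the assumption $n-r\leq 2$, Proposition \ref{prop:prop_clef_non_cyc}(1) identifies $\sZ(k)$ with a closed linear subvariety of $\bP^{n-1}(\Qp)$ of projective dimension $n-r-1\leq 1$. The desired bounds then reduce to the elementary fact that a nonzero homogeneous polynomial of degree $d$ has at most $d$ zeros on a projective line.

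For part (2), Proposition \ref{prop:prop_clef_non_cyc}(2) with $\gamma=0$ says that $\delta^{\textbf{G}}_{k_\infty/k}(\varphi)>0$ is equivalent to $\rk N_\varphi(S)<n$, i.e.\ to the simultaneous vanishing of every $n\times n$ minor of $N_\varphi(S)$. Since the upper block $\Diag(S)L_\varphi-\Diag(L_\mathds{1}S)$ is linear in $S$ while $M_\varphi$ is constant, each such minor is a polynomial of degree at most $n$ in $S$. If $\sC_\varphi(k)\neq\sZ(k)$, at least one $n\times n$ minor is nonzero at some point of $\sZ(k)$; restricting it to the projective line $\sZ(k)$ produces a nonzero polynomial of degree at most $n$ whose zero set contains $\sC_\varphi(k)$, yielding $|\sC_\varphi(k)|\leq n$. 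For part (1) with $\gamma=1$, the analogous argument applied to $(n-1)\times(n-1)$ minors---each of degree at most $n-1$---gives $|\sC(k)|\leq n-1$. Here one uses that $S\in \ker N(S)$ always, since $(\Diag(S)L-\Diag(LS))S=0$ and $S\in \ker M_\mathds{1}$ by Proposition \ref{prop:prop_clef_non_cyc}(1), so $\rk N(S)\leq n-1$.

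For the imaginary quadratic case, $n=2$, $r=0$, and $V:=\ker M_\mathds{1}=\ob{\Q}_p^2$, so that $\sZ(k)=\bP^1(\Qp)$. The minor argument only gives $|\sC(k)|\leq 1$, and a finer analysis is needed. Fixing a basis $(S_0,S_1)$ of $V$ and writing $S=\alpha S_0+\beta S_1$, $T=\gamma S_0+\delta S_1$, a direct computation yields that the $i$-th component of $(\Diag(S)L-\Diag(LS))T$ equals
\[
(\alpha\delta-\beta\gamma)\bigl((S_0)_i(LS_1)_i-(S_1)_i(LS_0)_i\bigr).
\]
Consequently, $\ker N(S)$ strictly contains $\langle S\rangle$ if and only if the $S$-independent vector $W$ with components $w_i=(S_0)_i(LS_1)_i-(S_1)_i(LS_0)_i$ vanishes, so $\sC(k)\in\{\emptyset,\sZ(k)\}$. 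Since $k/\Q$ is abelian, Greenberg's theorem \cite{greenberg1973} ensures that the Gross-Kuz'min conjecture holds for $k_{\cyc}/k$, so $k_{\cyc}\notin\sC(k)$. Therefore $\sC(k)\neq\sZ(k)$, which forces $W\neq 0$ and hence $\sC(k)=\emptyset$.

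The main obstacle is this last step. The minor-counting bound leaves open a single exceptional $\Zp$-extension, and upgrading to emptiness relies crucially on both the $S$-independence of the defining condition and Greenberg's theorem. A purely transcendence-theoretic argument showing $W\neq 0$ directly would amount to ruling out that certain norms of elements of $K^\times$ map to roots of unity in $\Qp$ under the two $p$-adic embeddings of $k$, which the Baker--Brumer theorem does not automatically supply.
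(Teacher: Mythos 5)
Your proof of the counting bounds is essentially the paper's: identify $\sZ(k)$ with the line $\ker M_{\mathds{1}}\subset\bP^{n-1}(\Qp)$, observe $N(S)S=0$ so that $\delta^\textbf{G}_{k_\infty/k}>0$ (resp.\ $\delta^\textbf{G}_{k_\infty/k}(\varphi)>0$) is the vanishing of all $(n-1)\times(n-1)$ minors of $N(S)$ (resp.\ $n\times n$ minors of $N_\varphi(S)$), which are homogeneous of degree $\leq n-1$ (resp.\ $\leq n$) in $S$, and count zeros on the line. Where you genuinely diverge is the imaginary quadratic case. The paper argues directly: for $n=2$, $r=0$, one has $N(S)=\bigl(\begin{smallmatrix}-s_2L_{1,2} & s_1L_{1,2}\\ s_2L_{2,1} & -s_1L_{2,1}\end{smallmatrix}\bigr)$, and both $L_{1,2}$ and $L_{2,1}$ are nonzero because $\iota_{\gp_2}(u_{1,\mathds{1}})$ and $\iota_{\gp_1}(u_{2,\mathds{1}})$ are units of $\Qp$ which are not roots of unity (they are elements of $k^\times$ with a nonzero valuation at the other prime), so $\log_p$ does not kill them; hence $\rk N(S)=1$ for every $S\in\bP^1(\Qp)$ and $\sC(k)=\emptyset$ unconditionally and self-containedly. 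You instead prove the clean dichotomy $\sC(k)\in\{\emptyset,\sZ(k)\}$ via the $S$-independent vector $W$ (your computation is correct, and with the standard basis $W=(L_{1,2},-L_{2,1})$), and then rule out $\sC(k)=\sZ(k)$ by invoking Greenberg's theorem at the cyclotomic point. That is a valid alternative — Greenberg's result is external to this paper, so there is no circularity — but it imports a nontrivial theorem where none is needed, and your closing remark overstates the obstacle: showing $W\neq 0$ does not require Baker--Brumer or any transcendence input, only the elementary facts that the kernel of $\log_p$ on $\Qp^\times$ consists of roots of unity times powers of $p$ and that an element of $k^\times$ (or a norm from $K^\times$) with a nonzero valuation at some prime cannot be a root of unity. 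So your argument is correct, but the paper's one-line direct verification is both simpler and unconditional.
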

	\begin{proof}
		Let us prove (1). Notice first that all $S\in \ker M$ lie in the kernel of $N(S)$. By Proposition \ref{prop:prop_clef_non_cyc} with $\varphi=\mathds{1}$ and $K=k$, $\sC(k)$ is in bijection with the set of all $S\in \bP^{n-1}(\Qp)$ such that $MS=0$ and $\rk N(S)<n-1$.
		
		Assume first that $k$ is quadratic, so $n=2$ and $r=0$. Given any $S=(s_1,s_2)\in \bP^1(\Qp)$, the matrix $N(S)=\Diag(S)L - \Diag(LS)$ has the form 
		\[\begin{pmatrix}
			-s_2L_{1,2} & s_1 L_{1,2} \\ s_2 L_{2,1} & -s_1 L_{2,1}
		\end{pmatrix}.\]
		As $\log_p$ is injective on $\Z_p^\times$, $\log_p(\iota_{\gp_1}(u_2))$ and $\log_p(\iota_{\gp_2}(u_1))$ both are non-zero, so at least one of the two non-diagonal entries of $N(S)$ is non-zero. Therefore, this matrix has rank one for any $S\in \bP^1(\Qp)$, and $\sC(k)=\emptyset$.
		
		We no longer assume that $k$ is imaginary quadratic, but we still assume that $n-r\leq 2$. The case where $n-r=1$ is trivial, because it forces $\sZ(k)=\{k_{\cyc}\}$. We may then assume that $n-r=2$. Since $M$ has full rank $r$, there exist invertible matrices $P,Q$ such that $PMQ=\left( I_r\ | \ 0\right)$, where $I_r$ is the identity matrix of size $r$. The change of variables $S'=Q^{-1}S$ induces a linear bijection between $\ker M \subset \bP^{n-1}(\ob{\Q}_p)$ and the projective line $\{0\} \times \bP^{1}(\ob{\Q}_p) \subset \bP^{n-1}(\ob{\Q}_p)$. Now consider the list $P_1(S'),\ldots,P_t(S')$ of all $(n-1)\times(n-1)$-minors of the matrix	
		\[N(S)=N(QS')=\begin{bmatrix}
			\Diag(QS')L- \Diag(LQS')\\
			M
		\end{bmatrix}.\]
		All the $P_k$'s are two-variable homogeneous polynomials of degree $\leq n-1$. In particular, if $\sC(k)\neq \sZ(k)$, then at least one of the $P_k$'s is not the zero polynomial and hence, it has at most $n-1$ zeros in $\{0\} \times \bP^{1}(\ob{\Q}_p)$, so we can conclude that $|\sC(k)|\leq n-1$.
		
		The proof of point (2) very similar to the previous one. Indeed, by Proposition \ref{prop:prop_clef_non_cyc}, $\sC_\varphi(k)$ is in bijection with the set of all $S\in \bP^{n-1}(\Qp)$ such that $MS=0$ and $\rk N_\varphi(S)<n$. The same argument with the $(n-1)\times(n-1)$ minors of $N(S)$ replaced by the $n\times n$ minors of $N_\varphi(S)$ shows that, if $\sC_\varphi(k)\neq \sZ(k)$, then $|\sC(k)|\leq n$.
	\end{proof}
	
	We end the proof of Theorem \ref{thm:non_cyclotomic_defects} with the case where $k$ is imaginary quadratic. We let $\tau$ be the complex conjugation of $k$. Recall that $\tau$ acts on $\varphi$ via $\varphi^\tau(g)=\varphi(\tau g \tau)$ and that $\varphi^\tau=\varphi$ if and only if $\varphi$ cuts out an extension of $k$ which is abelian over $\Q$.
	\begin{proposition}\label{prop:non_cyc_non_trivial_char}
		Assume that $k$ is imaginary quadratic and that $\varphi\neq \mathds{1}$. 
		\begin{enumerate}
			\item If $\varphi^\tau=\varphi$, then $\sC_\varphi(k)=\emptyset$.
			\item If $\varphi^\tau\neq\varphi$, then any $k_\infty\in \sC_\varphi(k)$ has transcendental slope. 
			\item If $\varphi^\tau\neq\varphi$, then $|\sC_\varphi(k)|\leq 1$. Moreover, if
			$XYZ^2-(AX-BY)(CX-DY)$
			does not vanish on any $7$-tuple in $\Lambda^7$ which form a $\ob{\bQ}$-linearly independent set, then $\sC_\varphi(k)=\emptyset$.
		\end{enumerate}
	\end{proposition}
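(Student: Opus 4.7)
The plan is to apply the rank criterion of Proposition \ref{prop:prop_clef_non_cyc}(2): since $k$ is imaginary quadratic and $\varphi\neq\mathds{1}$, we have $n=2$ and $r(\varphi)=1$, so $N_\varphi(S)$ is a $3\times 2$ matrix and $k_\infty\in\sC_\varphi(k)$ if and only if all three of its $2\times 2$ minors vanish. A preliminary fact, used throughout, is that $M_{1,1,\varphi}$, $M_{1,2,\varphi}$, $L_{1,2}$ and $L_{2,1}$ are all nonzero; for the first two this follows from Proposition \ref{prop:log_p_almost_injective} applied to $\varepsilon_{1,\varphi}\in\ob{\bQ}\otimes\cO_K^\times$, which is not a rational power of $p$, and for the last two from the same result applied to $N_{K/k}(u_i)\in k^\times$, a $\gp_i$-unit that cannot be a rational power of $p$ either.

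For Part (1), the hypothesis $\varphi^\tau=\varphi$ forces $K$ to be abelian over $\bQ$ and, containing $k$, CM. I fix complex conjugation $\tilde\tau\in\Gal(K/\bQ)$, which has order $2$, set $u_2=\tilde\tau(u_1)$, and use that $\cO_K^\times\otimes\ob{\bQ}$ is $\tilde\tau$-fixed to choose $\varepsilon_{1,\varphi}$ in the even extension of the $\varphi$-isotypic subspace of $\ob{\bQ}\otimes\cO_K^\times$, so that $\tilde\tau(\varepsilon_{1,\varphi})=\varepsilon_{1,\varphi}$. A straightforward $\tilde\tau$-equivariance computation then yields $M_{1,1,\varphi}=M_{1,2,\varphi}$ together with the symmetries $L_{1,1,\bullet}=L_{2,2,\bullet}$ and $L_{1,2,\bullet}=L_{2,1,\bullet}$ for $\bullet\in\{\varphi,\mathds{1}\}$. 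Expanding the three $2\times 2$ minors of $N_\varphi(S)$ reduces $\rk N_\varphi(S)\leq 1$ to $s_1=\epsilon s_2$ together with $L_{1,1,\varphi}-L_{1,1}-L_{1,2,\varphi}=\epsilon L_{1,2}$ for some $\epsilon\in\{\pm 1\}$. The case $\epsilon=+1$ (cyclotomic slope) contradicts $\delta^\textbf{G}_{k_{\cyc}/k}(\varphi)=0$, which follows from Greenberg's theorem for the abelian field $K/\bQ$ via Artin formalism (Corollary \ref{coro:artin_formalism}). For $\epsilon=-1$ (anticyclotomic slope), the equation rewrites via Proposition \ref{prop:log_p_almost_injective} as $\prod_{\sigma\in G}\sigma^{-1}(u_1/u_2)^{\varphi(\sigma)-1}\in p^{\ob{\bQ}}\subset\ob{\bQ}\otimes K^\times$. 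Computing the $\gP$-adic valuations of this product for each prime $\gP\mid\gp_1$, on which $G$ acts simply transitively, yields values $(\varphi(\sigma_\gP)^{-1}-1)\,\ord_{\gP_1}(u_1)$ which cannot all be equal as $\varphi\neq\mathds{1}$, contradicting membership in $p^{\ob{\bQ}}$.

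For Parts (2) and (3), I proceed under $\varphi^\tau\neq\varphi$. The $(1,3)$-minor of $N_\varphi(S)$ takes the form $s_1X=s_2Y$ with $X=(L_{1,1,\varphi}-L_{1,1})M_{1,2,\varphi}-L_{1,2,\varphi}M_{1,1,\varphi}$ and $Y=L_{1,2}M_{1,2,\varphi}\neq 0$, which already determines the slope $\alpha=s_1/s_2\in\bP^1(\Qp)$ uniquely, establishing $|\sC_\varphi(k)|\leq 1$. Should $\alpha$ be algebraic, $\alpha X=Y$ would amount to a $\ob{\bQ}$-linear dependence between the two rows $(\alpha(L_{1,1,\varphi}-L_{1,1})-L_{1,2},\,\alpha L_{1,2,\varphi})$ and $(M_{1,1,\varphi},M_{1,2,\varphi})$ of a $2\times 2$ matrix over $\Lambda$; by the Baker-Brumer theorem this translates into a multiplicative identity in $\ob{\bQ}\otimes K^\times$ modulo $p^{\ob{\bQ}}$, which I would disprove by a $\gP$-adic valuation argument analogous to that of Part (1), now using that $\varphi^\tau\neq\varphi$ prevents the valuations from matching. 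This yields that $\alpha$ is transcendental. Finally, under the $p$-adic Schanuel conjecture, the logarithms entering $X$ and $Y$ are algebraically independent over $\ob{\bQ}$ modulo the sole relation $\log_p(p)=0$, so the polynomial relation $\alpha X=Y$ admits no solution in $\ob{\bQ}_p$ at all, forcing $\sC_\varphi(k)=\emptyset$.

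The most delicate step in the plan is the anticyclotomic case ($\epsilon=-1$) in Part (1) and its counterpart for the transcendence of $\alpha$ in Part (2): each requires translating a $\ob{\bQ}$-linear relation among $p$-adic logarithms into a multiplicative identity in $\ob{\bQ}\otimes K^\times$ via Proposition \ref{prop:log_p_almost_injective}, and then exploiting the simple transitivity of the $G$-action on primes of $K$ above $\gp_1$ together with $\varphi\neq\mathds{1}$ (respectively $\varphi^\tau\neq\varphi$) to rule it out.
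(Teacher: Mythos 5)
Your treatment of part (1) and of the uniqueness statement $|\sC_\varphi(k)|\leq 1$ is essentially sound and close to the paper's (the paper handles both signs $\epsilon=\pm 1$ uniformly by the linear independence of $u_{1,\varphi}/u_{1,\mathds{1}},u_{2,\mathds{1}},u_{2,\varphi}$ via Proposition \ref{prop:log_p_almost_injective}, whereas you outsource the cyclotomic slope $\epsilon=+1$ to Greenberg's theorem through Artin formalism; both are legitimate). The genuine gap is in parts (2) and (3), where you discard the second row of $N_\varphi(S)$ and work only with the single minor $s_1X=s_2Y$. For the transcendence of the slope, your claim that ``$\alpha X=Y$ amounts to a $\ob{\Q}$-linear dependence between the two rows'' of the $2\times 2$ matrix is not justified: the vanishing of a $2\times 2$ determinant only gives $\ob{\Q}_p$-proportionality of the rows (the proportionality factor has no reason to be algebraic), so it is a \emph{quadratic} relation among elements of $\Lambda$, not a linear one, and the Baker--Brumer theorem says nothing about it. Ruling out such a vanishing $2\times 2$ determinant with $\ob{\Q}$-independent rows and columns is precisely a (strong) four exponentials statement, which is open. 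This is why the paper keeps all three rows of $N_\varphi(S)$: when $\alpha$ is algebraic the whole $3\times 2$ matrix has entries in $\Lambda$, one checks that its rows and its columns are $\ob{\Q}$-linearly independent (again via Proposition \ref{prop:log_p_almost_injective} applied to suitable units of trivial $\gP_1$-valuation, e.g. $\{\varepsilon_{1,\varphi},\varepsilon_{1,\varphi^\tau}\}$ for the columns), and then Roy's strong six exponentials theorem (Corollary \ref{coro:strong_six_exponentials}) forces rank $2$, contradicting rank $1$. Your argument never invokes this (or any comparable) transcendence input, so the transcendence of the slope is not proved.

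The same omission breaks your Schanuel argument in part (3): the relation $\alpha X=Y$ \emph{always} has the solution $\alpha=Y/X\in\ob{\Q}_p$ (and Schanuel cannot prevent $Y/X$ from lying in $\Qp$), so ``no solution in $\ob{\Q}_p$ at all'' is false as stated; a single minor merely determines the slope and yields no contradiction. To contradict the $p$-adic Schanuel conjecture one must use at least two of the minors of $N_\varphi(S)$ and \emph{eliminate} the slope, obtaining a polynomial identity purely in logarithms --- in the paper, the relation $M_{1,1,\varphi}M_{1,1,\varphi^\tau}L_{2,1}^2=\bigl(M_{1,1,\varphi}(L_{1,1,\varphi^\tau}-L_{1,1})-M_{1,1,\varphi^\tau}L_{2,1,\varphi}\bigr)\bigl(M_{1,1,\varphi^\tau}(L_{1,1,\varphi}-L_{1,1})-M_{1,1,\varphi}L_{2,1,\varphi^\tau}\bigr)$ --- and then check that the logarithms involved are $\Q$-linearly independent (via the independence of $u_{1,\varphi}/u_{1,\mathds{1}},u_{1,\varphi^\tau}/u_{1,\mathds{1}},u_{2,\varphi}/u_{2,\mathds{1}},u_{2,\varphi^\tau}/u_{2,\mathds{1}},\varepsilon_{1,\varphi},\varepsilon_{1,\varphi^\tau}$ and Proposition \ref{prop:log_p_almost_injective}), so that Schanuel applies. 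So both halves of your plan for $\varphi^\tau\neq\varphi$ beyond the bound $|\sC_\varphi(k)|\leq 1$ need to be redone along these lines; the valuation trick that works for the linear relations in part (1) does not reach the quadratic relations occurring here.
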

	\begin{proof}
		Let $k_\infty\in \sZ(k)$ of coordinates $S=(s_1,s_2)$. Take $K$ to be the Galois closure over $\Q$ of the field cut out by $\varphi$. Note that $K\neq k$ and that $p$ totally splits in $K$. By Proposition \ref{prop:prop_clef_non_cyc}, $k_\infty$ belongs to $\sC_\varphi(k)$ if and only if the matrix
		\[N_\varphi(S)=\begin{pmatrix}
			s_1(L_{1,1,\varphi}-L_{1,1})-s_2 L_{1,2} & s_1L_{1,2,\varphi} \\
			s_2 L_{2,1,\varphi} & s_2(L_{2,2,\varphi}-L_{2,2}) - s_1 L_{2,1} \\
			M_{1,1,\varphi} & M_{1,2,\varphi}
		\end{pmatrix}\]
		has rank $1$. The definition of $L_\varphi$ (and also $L_{\varphi^\tau}$) involves the choice, for $i=1,2$, of a prime $\gP_i$ of $K$ above $\gp_i$ and a $\gP_i$-unit $u_i$ of non-zero valuation. Since $\tau(\gp_1)=\gp_2$, one may take  $\tau(\gP_1)=\gP_2$ and $\tau(u_1)=u_2$, so that $L_{1,1}=L_{2,2}$, $L_{1,2}=L_{2,1}$, $L_{1,1,\phi}=L_{2,2,\phi^\tau}$ and $L_{1,2,\phi}=L_{2,1,\phi^\tau}$ for $\phi\in\{\varphi,\varphi^\tau\}$. 
		We may also take $\tau(\varepsilon_{1,\varphi})$ to be $\varepsilon_{1,\varphi^\tau}$, so that $M_{1,2,\varphi}= M_{1,1,\varphi^\tau}$. Moreover, the elements $u_{1,\varphi},u_{1,\mathds{1}},u_{2,\varphi},u_{2,\mathds{1}},\varepsilon_{1,\varphi}$ and, under the additional condition $\varphi\neq \varphi^\tau$, the elements $u_{1,\varphi},u_{1,\varphi^\tau},u_{1,\mathds{1}},u_{2,\varphi},u_{2,\varphi^\tau},u_{2,\mathds{1}},\varepsilon_{1,\varphi},\varepsilon_{1,\varphi^\tau}$ form two sets of $\ob{\Q}$-linearly independent $p$-units. To see this, it suffices to consider their valuations at $\gP_1$ and $\gP_2$, and use the simple fact that units belonging to distinct isotypic components are linearly independent.
		
		Now, consider first the case where $\varphi=\varphi^\tau$. Then $N_\varphi(S)$ has rank $1$ if and only if its two columns are equal. This last condition easily implies that $s_1=\pm s_2$ and that $u_{1,\varphi}/u_{1,\mathds{1}}$, $u_{2,\mathds{1}}$ and $u_{2,\varphi}$ have $\ob{\Q}$-linearly dependent $\gP_1$-adic logarithms. But all of these units have trivial $\gP_1$-valuation, so they must be linearly dependent by Proposition \ref{prop:log_p_almost_injective}. We have already justified that this is not the case, so $N_{\varphi}(S)$ has rank $2$ and $\sC_\varphi(k)=\emptyset$. This proves (1).
		
		Assume now that $\varphi\neq \varphi^\tau$ and that $k_\infty$ has an algebraic slope, \textit{i.e.}, $s_1/s_2 \in \bP^1(\ob{\Q})$. We may assume that both $s_1$ and $s_2$ are algebraic numbers,  hence $N_\varphi(S)$ has coefficients in the $\ob{\Q}$-linear subspace $\Lambda$ of $\ob{\Q}_p$ introduced in Section \ref{sec:transcendence}. Moreover, $N_\varphi(S)$ has $\ob{\Q}$-linearly independent rows and columns. Indeed, the sets $\{\varepsilon_{1,\varphi},\varepsilon_{1,\varphi^\tau}\}$ and $\{u_{1,\varphi}/u_{1,\mathds{1}},u_{2,\mathds{1}},u_{2,\varphi},\varepsilon_{1,\varphi}\}$ are 
		two sets of independent units with trivial $\gP_1$-valuation. Therefore, their images under $\log_p \circ \iota_{\gP_1}$ are again linearly independent by Proposition \ref{prop:log_p_almost_injective}. We thus may apply Corollary \ref{coro:strong_six_exponentials} and conclude that $\rk N_\varphi(S)=2$.  This proves (2).
		
		Finally, assume that $\varphi\neq \varphi^\tau$ and that $k_\infty\in \sC_\varphi(k)$, \textit{i.e.}, $N_\varphi(S)$ has rank $1$. We already know that $s_1/s_2\in \bP^1(\Qp)-\bP^1(\ob{\Q})$ and in particular, both $s_1$ and $s_2$ are non-zero. Since $L_{1,2}\cdot M_{1,2,\varphi}\neq 0$, the vanishing of the minor obtained by removing the second row uniquely determines the slope $s_1/s_2$, so $|\sC_\varphi(k)|\leq 1$. Using the vanishing of the other minors, an elementary computation yields the polynomial relation 
		{\small
			\[M_{1,1,\varphi}\cdot M_{1,1,\varphi^\tau}\cdot L_{2,1}^2 =\left(M_{1,1,\varphi}\cdot (L_{1,1,\varphi^\tau}-L_{1,1}) - M_{1,1,\varphi^\tau}\cdot L_{2,1,\varphi}\right)  \left(M_{1,1,\varphi^\tau}\cdot (L_{1,1,\varphi}-L_{1,1}) - M_{1,1,\varphi}\cdot L_{2,1,\varphi^\tau}\right). \]
			\par}
		The elements of the set $\{u_{1,\varphi}/u_{1,\mathds{1}},u_{1,\varphi^\tau}/u_{1,\mathds{1}},u_{2,\varphi},u_{2,\varphi^\tau},u_{2,\mathds{1}},\varepsilon_{1,\varphi},\varepsilon_{1,\varphi^\tau}\}$ are linearly independent, and they all have a trivial $\gP_1$-adic valuation, so their images under $\log_p\circ \iota_{\gP_1}$ are also $\ob{\Q}$-linearly independent. Therefore, the above polynomial identity contradicts our assumption. This ends the proof of (3).
	\end{proof}

	\bibliography{bib}
	\bibliographystyle{alpha}
\end{document}